\begin{document}

\newtheorem{theorem}{Theorem}[section]
\newtheorem{lemma}[theorem]{Lemma}
\newtheorem{proposition}[theorem]{Proposition}
\newtheorem{corollary}[theorem]{Corollary}
\newtheorem{conjecture}[theorem]{Conjecture}
\newtheorem{question}[theorem]{Question}
\newtheorem{problem}[theorem]{Problem}
\newtheorem*{vague_question}{Motivating Question}
\newtheorem*{claim}{Claim}
\newtheorem*{criterion}{Criterion}
\newtheorem*{rationality_thm}{Rationality Theorem~\ref{positive_rational}}
\newtheorem*{ab_thm}{$w=ab$ Theorem~\ref{ab_theorem}}
\newtheorem*{stability_thm}{Stability Theorem~\ref{stability_theorem}}
\newtheorem*{stairstep_thm}{Stairstep Theorem~\ref{stairstep_theorem}}
\newtheorem*{isobar_thm}{Isobar Theorem~\ref{isobar_theorem}}
\newtheorem*{slippery_conjecture}{Slippery Conjecture}

\theoremstyle{definition}
\newtheorem{definition}[theorem]{Definition}
\newtheorem{construction}[theorem]{Construction}
\newtheorem{notation}[theorem]{Notation}

\theoremstyle{remark}
\newtheorem{remark}[theorem]{Remark}
\newtheorem{example}[theorem]{Example}

\numberwithin{equation}{subsection}

\newcommand\id{\textnormal{id}}

\newcommand\homeo{\textnormal{Homeo}}
\newcommand\rot{\textnormal{rot}}
\newcommand\Z{\mathbb Z}

\newcommand\rotz{\rot^\sim}

\newcommand\W{\mathcal W}
\newcommand\R{\mathbb R}
\newcommand\RP{\mathbb{RP}}
\newcommand\C{\mathbb C}
\newcommand\Q{\mathbb Q}
\newcommand\N{\mathbb N}
\renewcommand\H{\mathbb H}

\newcommand\cl{\textnormal{cl}}
\newcommand\scl{\textnormal{scl}}
\newcommand{\length}{\textnormal{length}}
\newcommand{\area}{\textnormal{area}}

\newcommand\til{\widetilde}
\newcommand\Tri{\Delta}
\newcommand\SL{\textnormal{SL}}
\newcommand\PSL{\textnormal{PSL}}
\newcommand\1{{\bf 1}}

\newcommand{\norm}[1]{\left|#1\right|}

\title{Ziggurats and rotation numbers}
\author{Danny Calegari}
\address{DPMMS \\ University of Cambridge \\
Cambridge CB3 0WA England}
\email{dcc43@cam.ac.uk}
\author{Alden Walker}
\address{Department of Mathematics \\ Caltech \\
Pasadena CA, 91125}
\email{awalker@caltech.edu}
\date{\today}
\dedicatory{for Bill Thurston}

\begin{abstract}
We establish the existence of new 
rigidity and rationality phenomena in the theory of nonabelian group 
actions on the circle, and introduce tools to translate questions about the 
existence of actions with prescribed dynamics into finite combinatorics.
A special case of our theory gives a very short new proof of Naimi's theorem 
(i.e. the conjecture of Jankins--Neumann) which was the last step in the 
classification of taut foliations of Seifert fibered spaces. 
\end{abstract}

\maketitle

\tableofcontents

\section{Introduction}

This paper introduces new techniques and uncovers a range of new phenomena in the
topological theory of nonabelian group actions on the circle. We develop computational
tools which let us reduce subtle dynamical questions to finite combinatorics, and 
then connect these questions back to (hyperbolic) geometry and low-dimensional topology.

\subsection{A character variety for $\homeo^+(S^1)$}

Linear actions of finitely generated groups are parameterized by 
{\em character varieties}. Because a character variety is an algebraic variety, 
the characters (which capture the abstract dynamics of a representation) are
{\em polynomial functions}, and this makes answering questions about the
existence or nonexistence of actions with certain properties {\em computationally
tractable}.

It is natural to want to generalize this theory to {\em nonlinear} actions of
finitely generated groups on manifolds. In general there is probably no chance of
such a generalization; however, for the case of {\em group actions on circles},
we believe some elements of the theory can be developed.

Associated to a linear representation $\rho:\Gamma \to G$ is a trace
$\chi:\Gamma \to \R$. Given $\Gamma$ and $G$, a trace is determined by its values
on finitely many $g\in \Gamma$, and for each $g\in \Gamma$ the value $\chi(g)$
is an invariant of the conjugacy class of $\rho(g)$. Associated to a nonlinear
representation $\rho:\Gamma \to \homeo^+(S^1)$ one has the {\em rotation number}
$\rot:\Gamma \to \R/\Z$, and for each $g\in \Gamma$ the value
$\rot(g)$ is a (complete) invariant of the {\em semi-conjugacy}
class of $\rho(g)$  (i.e.\/ the equivalence relation generated by dynamical semi-conjugacy).

Given a group $G$ and a collection of elements $g_i$ and $h$ in $G$, we would like to
know what possible dynamics can be achieved by $h$ under representations of $G$ into
$\homeo^+(S^1)$ for which the $g_i$ have prescribed dynamics, either in the form of
a constraint on the rotation numbers of the $g_i$, or by imposing the condition that
the $g_i$ are conjugate to rotations through prescribed angles. A good theory should
make it possible to {\em compute} these values. In fact, it is one of the main goals
of this paper to develop tools to allow one to describe the shadows (i.e\/ the
projections under $\rot$) of such ``algebraic'' subsets
of $\text{Hom}(G,\homeo^+(S^1))$, at least for $G$ free, or a free product of cyclic groups.

\medskip

One significant point of difference between characters and rotation numbers is that
it is {\em not} true that knowing the rotation numbers of finitely many elements
suffices to determine the rest. The following example depends on knowledge of
some elementary facts about hyperbolic structures on surfaces; see e.g.\/ \cite{Katok} for
a reference.

\begin{example}
Consider a family of representations of $F_2$ into $\PSL(2,\R)$
(which acts on $\RP^1$) associated to a family of (incomplete) hyperbolic structures
on a once-punctured torus. We identify $F_2$ with the fundamental group of this torus, so that
the generators $a$ and $b$ correspond to the meridian and longitude, and the commutator $[a,b]$
corresponds to the puncture.

At the complete finite area structure, every element is
hyperbolic or parabolic, and therefore every element has rotation number $0$ (mod $\Z$).
As we deform the image of $[a,b]$ from a parabolic to an elliptic element, 
$\rot([a,b])$ becomes nonzero. However, for every finite collection of elements not conjugate
to a power of $[a,b]$, the image stays hyperbolic, and the rotation number stays zero,
for sufficiently small perturbations.
Moreover, there is a 2 parameter family of deformations of the hyperbolic structure keeping
the cone structure near the puncture fixed, and under such a deformation some elements switch
from hyperbolic to elliptic or back.
\end{example}

On the other hand, there is a certain sense in which knowledge of {\em all} rotation numbers
together with certain homological data {\em does} determine a representation, again up to the relation 
of semi-conjugacy. The precise statement is due to Ghys \cite{Ghys} and is further refined by
Matsumoto \cite{Matsumoto}, and can be expressed in terms of the ``Euler class in integral bounded
cohomology''; since a proper discussion of this would take us too far afield, we do not pursue
it here.

\medskip

If it is not true that finitely many rotation numbers determine the rest, it is nevertheless
true that the rotation numbers of (say) the generators strongly {\em constrain} the rotation numbers
of the other elements, and in quite an interesting way. 
The main problem on which we focus in this paper is therefore the following:

\begin{vague_question}
Given a free group $F$, and an element $w$ of $F$, and given values of the rotation
numbers of the generators, what is the set of possible rotation numbers of $w$?
\end{vague_question}

We are especially focussed on the special case that $w$ is a {\em positive} word in the
generators of $F$; i.e.\/ it is contained in the {\em semigroup} they generate. In this case
the theory simplifies significantly, and we are able to obtain strong results.

\subsection{Phase locking and greedy rationals}

One of the best known and best studied examples of nonlinear phase locking in dynamics is
the phenomenon of {\em Arnol{}'d tongues}: the introduction of nonlinear noise into a family of
circle homeomorphisms tends to produce {\em periodic orbits}; informally, we call this
the phenomenon of {\em greedy rationals}. In this paper we are
concerned with another manifestation of this phenomenon, in the context of {\em nonabelian
group dynamics}. We find strong constraints on the dynamics of free group actions
on the circle that maximize certain dynamical quantities, and in many case show that
these these constraints are powerful enough to guarantee periodic orbits and therefore
rational rotation numbers.

\subsection{New proofs of old theorems}

The classification of taut foliations of Seifert fibered $3$-manifolds was the
culmination of the work of many people, including Thurston, Brittenham, Eisenbud, Hirsch,
Jankins, Neumann and others, and was completed by Naimi \cite{Naimi} by proving an outstanding
conjecture of Jankins--Neumann \cite{Jankins_Neumann}. The conjecture of Jankins--Neumann
is equivalent to an analysis of the first non-trivial example in our theory, and our
methods lead to a new and very short proof of Naimi's result, thereby embedding this
classical work into a new and more powerful context.

\subsection{Statement of results}

For simplicity, the results in our paper are proved for actions of a 
free group of rank 2 on the circle. However, the generalization to arbitrary rank is
straightforward. Statements of the main theorems for arbitrary rank can be found in the
appendix; the proofs are routine generalizations of the proofs given in the paper
for the rank 2 case, and are left to the reader.

\medskip

\S~\ref{basic_definitions} is devoted to setting up notation and proving some
elementary results.

We introduce the following notation. Let $F$ be a free group of rank $2$
generated by elements $a,b$. Let $\homeo^+(S^1)^\sim$ denote the universal central
extension of the group of orientation-preserving homeomorphisms of the circle.
For $\varphi \in \homeo^+(S^1)^\sim$, let $\rotz(\varphi)$ denote the (real-valued)
rotation number.

Given $w\in F$ and real numbers $r,s$, let $X(w,r,s)$ denote the set of values 
of $\rotz(w)$ achieved by representations $F \to \homeo^+(S^1)^\sim$ for which $\rotz(a)=r$
and $\rotz(b)=s$, and let $R(w,r,s)$ denote the supremum of the set
$X(w,r,s)$. In words, $R(w,r,s)$ is the supremum of $\rotz(w)$ under all representations
for which $\rotz(a)=r$ and $\rotz(b)=s$. It turns out
(see Lemma~\ref{maximum_achieved} and Proposition~\ref{interchange}) 
that $X(w,r,s)$ is a {\em compact} interval with maximum $R(w,r,s)$ (so that the supremum is
really {\em achieved}) and minimum $-R(w,-r,-s)$, so we focus almost exclusively on $R(w,r,s)$
throughout the paper.

\medskip

A word $w\in F$ is {\em positive} if it does not contain $a^{-1}$ or $b^{-1}$.
Positive words are the focus of \S~\ref{positive_section}, and our 
strongest results are stated and proved for positive words.

The first main result we prove is the Rationality Theorem:

\begin{rationality_thm}
Suppose $w$ is positive. If $r$ and $s$ are rational, so is $R(w,r,s)$.
Moreover, if $w$ is not a power of $a$ or $b$,
the denominator of $R(w,r,s)$ is no bigger than the minimum of the denominators
of $r$ and of $s$.
\end{rationality_thm}

Moreover, we are able to reduce the computation of $R(w,r,s)$ in any case to a
{\em finite} (albeit complicated) combinatorial problem, which can be solved
explicitly, e.g.\/ by computer.

Complementing the rationality theorem is the Stability Theorem:

\begin{stability_thm}
Suppose $w$ is positive. Then $R$ is locally constant from the right at rational
points; i.e.\/ for every pair of rational numbers $r$ and $s$, there is an
$\epsilon(r,s)>0$ so that $R(w,\cdot,\cdot)$ is constant on $[r,r+\epsilon)\times[s,s+\epsilon)$.

Conversely, if $R(w,r,s)=p/q$ (where $p/q$ is reduced) and the biggest power of
consecutive $a$'s in $w$ (resp. $b$'s) is $a^m$ (resp. $b^n$), then $R(w,r+1/mq,s) \ge p/q + 1/q^2$
(resp. $R(w,r,s+1/nq) \ge p/q + 1/q^2$).
\end{stability_thm}

We think of this theorem as a nonabelian group theoretic cousin of the phenomenon
of {\em Arnol{}'d tongues}, with the ``tongues'' corresponding to the regions in the
$r$--$s$ plane where $R(w,\cdot,\cdot)$ is locally constant. The Stability Theorem shows
that the sizes of these tongues decrease with $q$, but experiments suggest a
sharper relationship $\epsilon \sim q^{-1}$.
This seems very exciting to us, and deserves further exploration.

\medskip

For the special case of $w=ab$, the combinatorics becomes simple enough to allow
a complete solution. This leads to an elementary proof of the following theorem,
first conjectured by Jankins--Neumann and proved by Naimi:

\begin{ab_thm}
For $0\le r,s < 1$ there is an equality
$$R(ab,r,s) = \sup_{p_1/q\le r, \; p_2/q \le s} (p_1 + p_2 + 1)/q$$
\end{ab_thm}

As remarked earlier, this theorem is the last step in the classification of taut foliations
of Seifert fibered spaces.

\medskip

If we fix $w$ and a rational number $p/q=\rotz(a)$, the function $R(w,p/q,\cdot)$
is nondecreasing and takes on a discrete set of values (however, it is typically discontinuous). 
The next theorem says that this function is continuous from the right, and the
points of discontinuity are rational, and can be determined by a finite procedure.

\begin{stairstep_thm}
Let $w$ be positive, and suppose we are given rational numbers $p/q$ and $c/d$ so that
$c/d$ is a value of $R(w,p/q,\cdot)$ (so necessarily $d \le q$). Then
$\inf \lbrace t : R(w,p/q,t)=c/d \rbrace$ is rational, 
and there is an algorithm to compute it. Moreover,
if $u/v$ is this infimal value, $R(w,p/q,u/v)=c/d$.
\end{stairstep_thm}

Although we are not able to give a precise description of the level sets of $R$, we
are able to give a description of the frontier of the set where $R(w,\cdot,\cdot)\ge p/q$
for a given rational number $p/q$. This is summarized in the following Isobar Theorem:

\begin{isobar_thm}
Let $w$ be positive. For any rational $p/q$ the set of $r,s\subset [0,1]\times[0,1]$ 
such that $R(w,r,s)\ge p/q$ is a finite sided rational polyhedron, whose boundary 
consists of finitely many horizontal or vertical segments.
\end{isobar_thm}

One subtle point is that the set where $R(w,r,s)=p/q$ is {\em not} in general a finite
sided rational polyhedron, and can in fact be extremely complicated.

\medskip

\S~\ref{arbitrary_word_section} moves on to the case of arbitrary $w$. The
Rationality Theorem generalizes to words $w$ which are {\em semipositive}; i.e.\/ that
contain to $a^{-1}$ or no $b^{-1}$ (Theorem~\ref{semipositive_rational}). 

We conjecture (Conjecture~\ref{rationality_conjecture}) that for arbitrary $w$
and rational $r,s$, $R(w,r,s)$ is rational.
Given $w$, a compactness argument (using circular orders
rather than actions) shows that there is a representation with $\rotz(a)=r$ and
$\rotz(b)=s$ realizing $\rotz(w)=R(w,r,s)$. Counterfactually, we suppose that $\rotz(w)$ is
irrational, and then try to modify the dynamics to increase it. This gives rise to a
dynamical problem of independent interest called the {\em interval game}. Roughly
speaking, given a finite collection of homeomorphisms $\varphi_1,\varphi_2,\cdots,\varphi_m$
and an irrational number $\theta$, we would like to find an interval $I$ and an integer $n$
for which $R_\theta^i(I^+)$ is not contained in $\varphi_j(I)$ for any $0\le i \le n$
and any $j$, but $R_\theta^n(I^+) \in I$. Here $I^+$ denotes the rightmost point of the
interval $I$, and $R_\theta$ is the rotation $R_\theta:z \to z+\theta$.

We show (Theorem~\ref{generic_smooth_enemies} and Theorem~\ref{single_enemy_nonrotation})
that the interval game can be solved for {\em generic} $C^1$ diffeomorphisms
$\varphi_i$, and for {\em all} cases of a single homeomorphism $\varphi$ except when
$\varphi$ is itself a rotation $R_\phi$. When $\varphi$ is a rotation, the set
of pairs $(\theta,\phi)$ for which the interval game can be won turns out to be
an interesting open, dense subset of the unit square, which is described in
Theorem~\ref{single_enemy_rotation}. It follows that for $w$ containing a single $a^{-1}$,
there are strong constraints on the possible irrational values of $R(w,r,s)$,
in terms of $w$ and $s$, namely that they are contained in the complement of an
explicit dense $G_\delta$.

\medskip

We are also concerned throughout this paper with representations satisfying the stronger
constraint for which $a$ and $b$ are conjugate to rotations $R_r$ and $R_s$ respectively.
We denote by $R(w,r-,s-)$ the supremum of $\rotz(w)$ over representations of this kind.
It turns out that for $w$ positive, $R(w,r-,s-)$ is the limit of $R(w,r',s')$ as $r' \to r$
and $s'\to s$ from below. Because of this, we can calculate $R(w,r-,s-)$ to any degree of
accuracy, and in many (most) cases, show that it is rational. By a trick, for rational
$r,s$ and any word $w$ there is some {\em positive} word $w'(w,r,s)$ and an explicit integer 
$N(w,r,s)$ for which $R(w,r-,s-)=N+R(w',r-,s-)$, and therefore we can (numerically)
compute $R(w,r-,s-)$ for rational $r,s$ and for {\em arbitrary} words $w$. This achieves
our goal of making the theory computationally tractable.

\section{Basic definitions and properties}\label{basic_definitions}

In the sequel we fix $F$, a free group on two generators $a$ and $b$.
We denote by $\homeo^+(S^1)$ the group of orientation-preserving homeomorphisms of
$S^1$, and by $\homeo^+(S^1)^\sim$ the group of orientation-preserving
homeomorphisms of $\R$ covering homeomorphisms of $S^1$ under the covering projection
$\R \to S^1$. There is a central extension
$$0 \to \Z \to \homeo^+(S^1)^\sim \to \homeo^+(S^1) \to 0$$
and the kernel of $\homeo^+(S^1)^\sim \to \homeo^+(S^1)$ is the group of integer
translations of $\R$.

We denote by $R_\theta\in \homeo^+(S^1)$ the homeomorphism $z \to z+\theta$; we call
$R_\theta$ the {\em rigid rotation} through angle $\theta$. By abuse of notation,
we also denote by $R_\theta$ its lift to $\homeo^+(S^1)^\sim$. In the first case,
$\theta$ should be thought of as an element of $S^1$; in the second, as an element of
$\R$. Which meaning is intended should be clear from context.

\subsection{Rotation number}

We assume the reader is familiar with Poincar\'e's rotation number; for
a basic reference, see e.g.\/ Ghys \cite{Ghys_circle} or Herman \cite{Herman}.

\medskip

Rotation number is a function $\rot:\homeo^+(S^1) \to \R/\Z$ which admits a real valued lift
$\rotz:\homeo^+(S^1)^\sim \to \R$. The following proposition summarizes some basic properties
of these functions. These properties are all well-known, and we will use them without
comment throughout the paper.

\begin{proposition}\label{elementary_properties_of_rot}
Rotation number satisfies the following properties.
\begin{enumerate}
\item{$\rot$ and $\rotz$ are conjugacy invariants, and are continuous in the $C^0$ topology.}
\item{A homeomorphism $\varphi\in\homeo^+(S^1)$ 
has rational rotation number with denominator $q$
if and only if it has a periodic orbit with period $q$.}
\item{$\rot$ and $\rotz$ are homomorphisms when restricted to any cyclic (in fact, amenable)
group of homeomorphisms.}
\item{For any $\varphi\in\homeo^+(S^1)^\sim$ with $\rotz(\varphi)=r$,
the map $t \to \rotz(R_t\circ \varphi)$
defines a continuous, nondecreasing, surjective map from $[0,1] \to [r,r+1]$
(see \cite{Herman}).}
\end{enumerate}
\end{proposition}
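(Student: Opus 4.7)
The plan is to verify each of the four properties directly from the definition $\rotz(\varphi) = \lim_{n\to\infty}(\varphi^n(x)-x)/n$, which is well-known to exist and to be independent of $x\in\R$; each assertion is standard and I would follow the presentations in Ghys \cite{Ghys_circle} or Herman \cite{Herman}, so the proof is largely bookkeeping.

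For (1), the estimate $|\rotz(\varphi)-(\varphi^n(x)-x)/n|\le 1/n$ (from the fact that $\varphi^n-\id$ oscillates by at most $1$ over any unit interval, because $\varphi$ commutes with integer translation) shows that $\rotz$ is a uniform limit of the continuous functions $\varphi\mapsto(\varphi^n(x)-x)/n$, giving $C^0$ continuity. Conjugacy invariance follows by the same boundedness: for $\psi\in\homeo^+(S^1)^\sim$, the difference $\psi-\id$ is $\Z$-periodic hence bounded, so $(\psi\varphi^n\psi^{-1}(\psi(x))-\psi(x))/n$ differs from $(\varphi^n(x)-x)/n$ by $O(1/n)$ and the limits coincide.

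For (2), if $\rot(\varphi)=p/q$ with $\gcd(p,q)=1$, pick the lift $\tilde\varphi$ with $\rotz(\tilde\varphi)=p/q$; then $\tilde\varphi^q-p$ has rotation number $0$ and so has a fixed point (any homeomorphism of $\R$ with rotation number $0$ has a fixed point), producing a periodic orbit of period dividing $q$, and the period cannot be smaller because $p/q$ is reduced. The converse is a direct computation of the limit on the periodic orbit. For (3), on $\langle\varphi\rangle$ the identity $\rotz(\varphi^n)=n\rotz(\varphi)$ is immediate from the definition applied along the subsequence $(\varphi^n)^k=\varphi^{nk}$; the amenable case, which we do not use in the paper, is handled by averaging with an invariant mean on the group.

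For (4), continuity in $t$ is a special case of (1). Monotonicity follows from the fact that $s\ge t$ implies $R_s\circ\varphi\ge R_t\circ\varphi$ pointwise on $\R$, and pointwise order of lifts is preserved under iteration, hence under the limit defining $\rotz$. Finally, since $R_1$ is the central integer translation, $R_1$ commutes with $\varphi$ and $(R_1\circ\varphi)^n(x)=\varphi^n(x)+n$, giving $\rotz(R_1\circ\varphi)=r+1$; surjectivity onto $[r,r+1]$ then follows from the intermediate value theorem. The only mildly subtle step is the order-preservation argument for monotonicity in (4), since one must check that the asymptotic limit inherits strict order from pointwise order of all the iterates; but this is immediate because the inequalities $(R_s\varphi)^n(x)\ge(R_t\varphi)^n(x)$ pass to the limit.
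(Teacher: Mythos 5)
The paper states this proposition without proof, explicitly treating all four items as well-known facts and referring the reader to Ghys \cite{Ghys_circle} and Herman \cite{Herman}; there is no argument in the paper to compare against. Your sketch is a correct reconstruction of the standard proofs from the definition $\rotz(\varphi)=\lim_n(\varphi^n(x)-x)/n$, and matches what one would find in the cited references: the $1/n$ uniform estimate for continuity and conjugacy invariance, lifting $\tilde\varphi^q$ minus an integer translation to produce a fixed point for (2), the subsequence trick for (3), and order-preservation of lifts plus the intermediate value theorem for (4). One small slip of wording: in (4) you say the limit ``inherits strict order,'' but what the argument delivers (and what the proposition asserts) is the non-strict inequality, i.e.\ that $t\mapsto\rotz(R_t\circ\varphi)$ is nondecreasing -- strict monotonicity in fact fails wherever $R_t\circ\varphi$ has a robust periodic orbit. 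With that wording fixed, the proposal is sound.
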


Some more substantial, but nevertheless classical results, concern when a homeomorphism
is conjugate to some $R_\theta$:

\begin{theorem}[Denjoy (see \cite{Herman})]
If $\varphi$ is $C^2$ and has irrational rotation number, $\varphi$ is
($C^0$) conjugate to a rigid rotation with the same rotation number.
\end{theorem}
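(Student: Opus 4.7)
The plan is to proceed in two stages: first build a continuous degree-one monotone semi-conjugacy from $\varphi$ to $R_\theta$ with $\theta = \rot(\varphi)$ using only the combinatorics of a single orbit, and then upgrade this to a genuine topological conjugacy by invoking the $C^2$ hypothesis to rule out wandering intervals.

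For the semi-conjugacy step, I would fix a basepoint $x_0 \in S^1$ and consider the orbit $\{\varphi^n(x_0)\}_{n \in \Z}$. Since $\theta$ is irrational, this orbit is aperiodic, and a classical argument of Poincar\'e shows that the cyclic order of the points $\varphi^n(x_0)$ on $S^1$ coincides with the cyclic order of the points $R_\theta^n(0)$. Consequently the assignment $h_0 : \varphi^n(x_0) \mapsto R_\theta^n(0)$ is well-defined and monotone on the orbit; extending by monotonicity to its closure, and then constantly across each complementary gap, yields a continuous, nondecreasing, degree-one surjection $h : S^1 \to S^1$ satisfying $h \circ \varphi = R_\theta \circ h$. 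This construction uses only continuity of $\varphi$.

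The main stage is to show that $h$ is injective under the $C^2$ hypothesis, and hence a homeomorphism. Failure of injectivity is equivalent to the existence of a wandering interval, i.e.\/ an open interval $I \subset S^1$ whose iterates $\varphi^n(I)$ are pairwise disjoint for $n \in \Z$. Here the crucial input is the Denjoy distortion estimate: because $\log \varphi'$ has bounded total variation $V$, for any interval $J$ with $J, \varphi(J), \ldots, \varphi^{n-1}(J)$ pairwise disjoint, telescoping along the orbit gives
\[ \bigl| \log (\varphi^n)'(y) - \log (\varphi^n)'(z) \bigr| \le V \]
for all $y, z \in J$, and hence $(\varphi^n)'$ is comparable to its mean value on $J$ up to the multiplicative constant $e^V$. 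Applying this with $J = I$ along the continued fraction denominators $q_k$ of $\theta$ (so that $I, \varphi(I), \ldots, \varphi^{q_k-1}(I)$ are disjoint by the closest-return combinatorics of an irrational rotation), the distortion bound forces $|\varphi^{q_k}(I)|$ to remain uniformly comparable to $|I|$. Since disjoint intervals in $S^1$ have total length at most $1$, summing over $k$ produces a contradiction.

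The hard part is the Denjoy inequality together with its combinatorial application: one must use the precise structure of closest returns under $R_\theta$ to identify iterates along which the orbit of $I$ stays disjoint, and then invoke the distortion bound to convert bounded variation of $\log \varphi'$ into a lower bound on the lengths of the iterates of $I$. Once wandering intervals are excluded, every orbit of $\varphi$ is dense in $S^1$, so the monotone semi-conjugacy $h$ has no intervals to collapse and is therefore a homeomorphism, delivering the desired $C^0$ conjugacy $h \circ \varphi \circ h^{-1} = R_\theta$.
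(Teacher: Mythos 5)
The paper states this as a classical theorem and cites Herman for the proof, so there is no in-paper argument to compare against; your outline is the standard Denjoy strategy (Poincar\'e semi-conjugacy plus exclusion of wandering intervals via a distortion estimate), and the first stage and the distortion lemma itself are stated correctly. Note also that you correctly use only the sharper hypothesis that $\log\varphi'$ has bounded variation, which $C^2$ implies.

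However, the concluding step has a genuine gap. Applying the distortion lemma with $J=I$ tells you only that $(\varphi^{q_k})'$ is comparable, up to the factor $e^V$, to its \emph{mean value} $|\varphi^{q_k}(I)|/|I|$ on $I$; since that mean value tends to $0$ (the iterates of a wandering interval are pairwise disjoint, so $|\varphi^n(I)|\to 0$), this one-sided estimate is perfectly consistent with $|\varphi^{q_k}(I)|$ shrinking and yields no contradiction. Your parenthetical is also misplaced: the iterates $I,\varphi(I),\dots$ are disjoint simply because $I$ is wandering, not because of closest returns. Where the closest-return combinatorics is actually needed is in the \emph{two-sided} estimate: for the continued-fraction denominators $q_k$, the intervals $(\varphi^{i-q_k}(x),\varphi^{i}(x))$ for $0\le i<q_k$ are pairwise disjoint, so the telescoping sum
\[
\log\bigl[(\varphi^{q_k})'(x)\,(\varphi^{-q_k})'(x)\bigr]
=\sum_{i=0}^{q_k-1}\bigl[\log\varphi'(\varphi^{i}x)-\log\varphi'(\varphi^{i-q_k}x)\bigr]
\]
is bounded by $V$, giving $(\varphi^{q_k})'(x)(\varphi^{-q_k})'(x)\ge e^{-V}$. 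Integrating over $I$ and using Cauchy--Schwarz (or AM--GM) then gives $|\varphi^{q_k}(I)|\cdot|\varphi^{-q_k}(I)|\ge e^{-V}|I|^2$, which contradicts $|\varphi^{\pm q_k}(I)|\to 0$. Without bringing in the backward iterates in this way, the argument as written does not close.
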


\begin{theorem}[Herman, Yoccoz \cite{Herman,Yoccoz}]
There is a subset of the set of irrational numbers of full measure, so that if
$\theta$ is an element of this set, any $\varphi$ which is $C^\infty$ and is conjugate
to $R_\theta$, is conjugate by a $C^\infty$ diffeomorphism.
\end{theorem}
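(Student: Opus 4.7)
The plan is to take the full-measure subset to be the Diophantine irrationals
\[
D_\tau = \{\theta\in\R\setminus\Q : \text{there is } C>0 \text{ with } |\theta - p/q|\ge Cq^{-(2+\tau)} \text{ for all } p/q\in\Q\},
\]
for some fixed $\tau>0$; a routine Borel--Cantelli argument shows each $D_\tau$ has full Lebesgue measure. Fix $\theta\in D_\tau$ and suppose $\varphi$ is $C^\infty$ and $C^0$-conjugate to $R_\theta$, so there is a homeomorphism $h_0$ with $h_0\varphi h_0^{-1}=R_\theta$. The goal is to replace $h_0$ by a $C^\infty$ diffeomorphism conjugating $\varphi$ to $R_\theta$.

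First I would seek the conjugacy as a perturbation $h=\id+u$ of the identity (after absorbing any already-smooth part of $h_0$) and look for $u$ solving $h\circ\varphi=R_\theta\circ h$. Linearizing in $u$ reduces the problem at each step to the \emph{cohomological equation}
\[
u(x+\theta)-u(x)=f(x),
\]
where $f$ is the current mean-zero $C^\infty$ obstruction. Expand in Fourier series and set $\hat u(n)=\hat f(n)/(e^{2\pi i n\theta}-1)$ for $n\ne 0$. The Diophantine hypothesis gives the small-denominator bound $|e^{2\pi i n\theta}-1|\gtrsim|n|^{-(1+\tau)}$, so $u$ loses only $1+\tau$ derivatives from $f$ and remains $C^\infty$. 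This is the one and only place where the full-measure set enters: for generic Liouville $\theta$ the small denominators can destroy smoothness and the conclusion fails.

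The hard part, and the true content of the Herman--Yoccoz theorem, is making an iterative scheme converge despite the loss of derivatives at each step. I would set up a Nash--Moser / KAM Newton iteration in the scale of $C^k$ Banach spaces: at step $n$, solve the cohomological equation to cut the obstruction from size $\epsilon_n$ to size $O(\epsilon_n^2)$, then compose with a smoothing operator of carefully chosen scale so the next iterate still lies in the relevant space. The main obstacle is balancing the scale parameters so that the quadratic decrease of the error beats the polynomial loss of derivatives, and then showing the iterates converge to a genuine $C^\infty$ diffeomorphism which, by construction, conjugates $\varphi$ to $R_\theta$. Rather than push the Nash--Moser bookkeeping through from scratch, I would follow Yoccoz's renormalization proof, which replaces the global iteration with an analysis of a renormalization operator on pairs of commuting maps and en route sharpens the Diophantine hypothesis to the Brjuno condition. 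In either approach the conceptual structure is simple, but the detailed analytic estimates are the serious obstacle.
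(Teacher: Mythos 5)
This theorem is stated in the paper as a classical result with citations to Herman and Yoccoz; the paper does not prove it, so there is no in-paper proof to compare your sketch against. What you have written is a correct high-level outline of how the theorem is proved in those references: take the full-measure set to be the Diophantine numbers, reduce the linearized problem to the cohomological equation $u(x+\theta)-u(x)=f(x)$, solve it by Fourier series using the small-divisor bound $|e^{2\pi i n\theta}-1|\gtrsim |n|^{-(1+\tau)}$, and then set up a Nash--Moser/KAM iteration (or Yoccoz's renormalization scheme) to absorb the loss of derivatives.

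Two caveats. First, your sketch correctly identifies where all the difficulty lies, but it does not discharge any of it: the convergence of the Newton iteration with smoothing, or equivalently the hyperbolicity/contraction estimates for the renormalization operator, \emph{is} the theorem, and you explicitly punt on it. That is reasonable for a result of this depth being quoted as background, but it should be understood that what you have is a roadmap, not a proof. Second, a small inaccuracy: the Brjuno condition is the sharp arithmetic condition in the \emph{analytic} linearization problem; for the $C^\infty$ statement being quoted here the relevant optimal condition (due to Yoccoz) is a Diophantine-type condition, not Brjuno. Saying the renormalization approach ``sharpens the Diophantine hypothesis to the Brjuno condition'' conflates the smooth and analytic settings. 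Neither of these affects the truth of the theorem or the legitimacy of quoting it, but they are worth keeping straight.
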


For concreteness, we call an irrational $\theta$ as in Herman--Yoccoz theorem a
{\em Herman number}.

\subsection{Extremal representations}

The fundamental question we are concerned with in this paper is the following:

\begin{question}\label{rotation_question}
Given an element $w\in F$ and real numbers $r,s$, what is the supremum 
of $\rotz(\rho(w))$ over all representations $\rho:F \to \homeo^+(S^1)^\sim$ for which
$\rotz(\rho(a))=r$ and $\rotz(\rho(b))=s$?
\end{question}

The content of Jankins--Neumann's paper \cite{Jankins_Neumann} is a partial analysis
of this question for the important but very special case $w=ab$. 
They formulated a complete conjectural answer, and went a long way towards proving it;
their conjecture was subsequently established by Naimi \cite{Naimi}.

We discuss this special case in \S~\ref{ab_case} and give a new, much shorter, and
more direct proof of Jankins--Neumann's conjecture (see Theorem~\ref{ab_theorem}). 

\begin{remark}
One can ask variations on this question for more complicated groups $G$ and elements $w\in G$;
some results in this direction are obtained in \cite{Calegari_forcing}. The
techniques and results obtained in this paper generalize in a straightforward way
to the case of a free group $F$ of rank $>2$.
\end{remark}

We introduce the following notation.
\begin{notation}
Given $w\in F$ and $r,s\in \R$, we let $R(w,r,s)$ denote the supremum of
$\rotz(\rho(w))$ over all $\rho:F \to \homeo^+(S^1)^\sim$ for which
$\rotz(\rho(a))=r$ and $\rotz(\rho(b))=s$.
\end{notation}

\begin{notation}
Let $h_a,h_b:F \to \Z$ denote the homomorphism which ``counts'' the (algebraic) number
of $a$'s or $b$'s in $w\in F$; i.e.\/ the homomorphisms defined on generators by
$h_a(a)=1$, $h_a(b)=0$ and $h_b(a)=0$, $h_b(b)=1$ respectively.
\end{notation}

\begin{lemma}\label{unique_lift}
Let $\rho:F \to \homeo^+(S^1)$ be any representation, and let $[r]=\rot(a)$, $[s]=\rot(b)$
for some $[r],[s]\in\R/\Z$.
If $r,s$ are any lifts of $[r],[s]$ to $\R$, there is a unique lift
$\rho^\Z:F \to \homeo^+(S^1)^\sim$ with $\rotz(a)=r$, $\rotz(b)=s$.
\end{lemma}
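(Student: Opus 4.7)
The plan is to exploit the freeness of $F$ together with the fact that the lifts of any single homeomorphism of $S^1$ to $\homeo^+(S^1)^\sim$ form a $\Z$-torsor under composition with integer translations.

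First I would fix $\varphi_a \in \homeo^+(S^1)^\sim$ a lift of $\rho(a)$ and $\varphi_b \in \homeo^+(S^1)^\sim$ a lift of $\rho(b)$. Since the kernel of $\homeo^+(S^1)^\sim \to \homeo^+(S^1)$ is the central subgroup of integer translations, any other lift of $\rho(a)$ has the form $T_n \circ \varphi_a$ where $T_n(x) = x + n$. By part (3) of Proposition~\ref{elementary_properties_of_rot}, $\rotz$ is a homomorphism on the cyclic subgroup generated by $\varphi_a$ and $T_1$, and $\rotz(T_n) = n$. Hence $\rotz(T_n \circ \varphi_a) = n + \rotz(\varphi_a)$. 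Since $\rotz(\varphi_a) \equiv [r] \pmod{\Z}$ and $r$ is a specific lift of $[r]$, there is exactly one integer $n$ for which $n + \rotz(\varphi_a) = r$. Choose $\varphi_a$ to be this unique lift with $\rotz(\varphi_a) = r$, and similarly pick the unique lift $\varphi_b$ with $\rotz(\varphi_b) = s$.

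Next, since $F$ is free on the generators $a, b$, the universal property of the free group gives a unique homomorphism $\rho^\Z : F \to \homeo^+(S^1)^\sim$ sending $a \mapsto \varphi_a$ and $b \mapsto \varphi_b$. By construction $\rotz(\rho^\Z(a)) = r$ and $\rotz(\rho^\Z(b)) = s$. To verify that $\rho^\Z$ is indeed a lift of $\rho$, compose with the projection $\pi : \homeo^+(S^1)^\sim \to \homeo^+(S^1)$: the composition $\pi \circ \rho^\Z$ agrees with $\rho$ on the generators, so by uniqueness of the extension from a free basis, $\pi \circ \rho^\Z = \rho$.

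Finally, for uniqueness: any lift $\rho^\Z$ of $\rho$ with the specified rotation numbers must, when evaluated on $a$, yield a lift of $\rho(a)$ with $\rotz$-value equal to $r$; by the first paragraph this lift is uniquely determined, and similarly for $b$. Two homomorphisms from $F$ into any group that agree on the generators are equal, so $\rho^\Z$ is unique. There is essentially no obstacle here beyond correctly invoking the torsor structure on lifts; the mild subtlety is just to verify that $\rotz$ shifts by exactly $n$ under composition with $T_n$, which is immediate from the homomorphism property on the abelian subgroup $\langle \varphi_a, T_1\rangle$.
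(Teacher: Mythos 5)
Your proof is correct and is essentially the paper's argument (the paper compresses it to ``pick any lift, then multiply the generators by suitable powers of the center''); you have simply spelled out the torsor structure on lifts, the shift $\rotz(T_n\circ\varphi)=n+\rotz(\varphi)$, and the appeal to freeness of $F$ for existence and uniqueness of the extension.
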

\begin{proof}
Pick any lift, then multiply the generators by suitable powers of the center.
\end{proof}

The function $R$ has some elementary properties, which we record.

\begin{lemma}[periodicity]\label{lattice_transform}
For any integers $n,m$ we have $R(w,r+n,s+m)=R(w,r,s) + nh_a(w) + mh_b(w)$.
\end{lemma}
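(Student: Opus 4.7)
The plan is to construct an explicit bijection between representations with $\rotz(a)=r, \rotz(b)=s$ and representations with $\rotz(a)=r+n, \rotz(b)=s+m$, under which the rotation number of $w$ shifts by exactly $nh_a(w) + mh_b(w)$. Since the supremum in the definition of $R$ is then transported by this shift, the lemma follows immediately.

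More precisely, the kernel of $\homeo^+(S^1)^\sim \to \homeo^+(S^1)$ is central and consists of the integer translations; let $T_k \in \homeo^+(S^1)^\sim$ denote translation by $k \in \Z$. Given a representation $\rho$ with $\rotz(\rho(a))=r$ and $\rotz(\rho(b))=s$, I define $\rho'$ on generators by $\rho'(a) = T_n \circ \rho(a)$ and $\rho'(b) = T_m \circ \rho(b)$, extending to $F$ by the universal property of the free group. Because $T_n$ and $T_m$ are central in $\homeo^+(S^1)^\sim$, for any word $w \in F$ the translations can be pulled past all occurrences of $\rho(a)^{\pm 1}$ and $\rho(b)^{\pm 1}$, yielding
\[
\rho'(w) \; = \; T_{nh_a(w) + mh_b(w)} \circ \rho(w),
\]
where the exponent records the signed count of $a$'s and $b$'s appearing in $w$, which is exactly $h_a$ and $h_b$.

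Next I apply the fact that $\rotz$ is a homomorphism when restricted to the cyclic (hence amenable) group generated by any element together with the center (item (3) of Proposition~\ref{elementary_properties_of_rot}); since $\rotz(T_k) = k$, this gives
\[
\rotz(\rho'(w)) \; = \; nh_a(w) + mh_b(w) + \rotz(\rho(w)),
\]
and in particular $\rotz(\rho'(a)) = r+n$ and $\rotz(\rho'(b)) = s+m$.

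Finally, the map $\rho \mapsto \rho'$ is a bijection (its inverse uses $T_{-n}, T_{-m}$) between the two sets of representations appearing in the suprema defining $R(w,r,s)$ and $R(w,r+n,s+m)$, and it shifts $\rotz(\rho(w))$ by the constant $nh_a(w)+mh_b(w)$. Taking suprema yields the equality. There is essentially no obstacle; the only point worth flagging is that one must use the centrality of integer translations to commute them past the (generally non-commuting) $\rho(a), \rho(b)$, which is what makes the exponent homomorphism-like in $w$.
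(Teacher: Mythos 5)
Your argument is correct and is exactly the paper's proof, merely spelled out: the paper's one-line proof (``Multiply generators by suitable powers of the center'') is precisely your map $\rho\mapsto\rho'$ with $\rho'(a)=T_n\circ\rho(a)$, $\rho'(b)=T_m\circ\rho(b)$, together with the centrality and quasimorphism facts you invoke.
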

\begin{proof}
Multiply generators by suitable powers of the center.
\end{proof}

\begin{lemma}[left-right symmetry]\label{left_right_symmetry}
If $\overline{w}$ denotes the string obtained by reversing the order of $w$, then
$R(w,r,s)=R(\overline{w},r,s)$.
\end{lemma}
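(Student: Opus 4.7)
The plan is to prove both inequalities $R(w,r,s) \le R(\bar w,r,s)$ and its reverse by exhibiting, for each representation $\rho$ satisfying the generator constraints, a companion representation $\rho'$ satisfying the same constraints with $\rotz(\rho'(\bar w)) = \rotz(\rho(w))$. The bijectivity of $\rho \mapsto \rho'$ will then give equality of the suprema.

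The key tool is an anti-involution $\iota$ of $\homeo^+(S^1)^\sim$ that preserves rotation number. I would construct it as follows: let $\sigma: \R \to \R$ be the orientation-reversing involution $x \mapsto -x$. Since $\sigma$ normalizes the group of integer translations (indeed $\sigma T_n \sigma^{-1} = T_{-n}$), conjugation by $\sigma$ defines an outer automorphism of $\homeo^+(S^1)^\sim$ viewed inside $\homeo^+(\R)$. Set $\iota(g) := \sigma g^{-1} \sigma^{-1}$. Then $\iota$ is an anti-involution of $\homeo^+(S^1)^\sim$ (i.e.\ $\iota(gh) = \iota(h)\iota(g)$ and $\iota^2 = \id$), and a one-line computation $\iota(g)^n(0) = -g^{-n}(0)$ yields $\rotz(\iota(g)) = -\rotz(g^{-1}) = \rotz(g)$.

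Given $\rho$, define $\rho': F \to \homeo^+(S^1)^\sim$ on generators by $\rho'(a) = \iota(\rho(a))$ and $\rho'(b) = \iota(\rho(b))$, and extend. Because reversal on $F$ is an anti-automorphism and $\iota$ is an anti-involution on the target, their composition is a homomorphism, and a straightforward induction on word length gives $\rho'(\bar w) = \iota(\rho(w))$ for every $w \in F$. Applying $\rotz$ and using that $\rotz \circ \iota = \rotz$ gives $\rotz(\rho'(\bar w)) = \rotz(\rho(w))$, while by construction $\rotz(\rho'(a)) = r$ and $\rotz(\rho'(b)) = s$. Since $\iota^2 = \id$, the assignment $\rho \mapsto \rho'$ is a bijection on the set of representations satisfying the constraints, so taking suprema on both sides gives $R(w,r,s) = R(\bar w,r,s)$.

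The only real content is the construction of the rotation-number-preserving anti-involution $\iota$; once that is in place everything else is formal bookkeeping, and I do not anticipate a serious obstacle. A pure inversion $g \mapsto g^{-1}$ would be the obvious first candidate but negates rotation number, so the twist by the orientation-reversing $\sigma$ is precisely what is needed to cancel the sign.
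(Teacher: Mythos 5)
Your proposal is correct and is essentially the paper's own argument: the authors' proof (``changing the orientation of the circle multiplies rotation numbers by $-1$; taking inverses multiplies rotation numbers by $-1$'') is exactly the anti-involution $g\mapsto \sigma g^{-1}\sigma^{-1}$ that you construct, stated tersely. You have simply filled in the formal bookkeeping that the paper leaves to the reader.
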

\begin{proof}
Changing the orientation of the circle multiplies rotation numbers by $-1$.
Taking inverses multiplies rotation numbers by $-1$. The composition of
these two operations proves the lemma.
\end{proof}

There are {\it a priori} estimates on $R(w,r,s)$ in terms of the combinatorics of $w$.

\begin{lemma}\label{counting_inequality}
Suppose $w$ is conjugate into the form $w=a^{\alpha_1}b^{\beta_1}\cdots a^{\alpha_m}b^{\beta_m}$
for some $m=m(w)$. Then $R(w,r,s) \le m(w) + rh_a(w) + sh_b(w)$.
\end{lemma}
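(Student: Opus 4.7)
The plan is to exploit that $\rotz$ restricts to a homomorphism on any cyclic subgroup (Proposition~\ref{elementary_properties_of_rot}(3)) and is a homogeneous quasimorphism on $\homeo^+(S^1)^\sim$ with defect at most~$1$; i.e.\ $\rotz(\varphi\psi) \le \rotz(\varphi) + \rotz(\psi) + 1$ for all $\varphi,\psi$. Since $\rotz$ is conjugation-invariant, I may assume $w$ is already in the stated form, and then in any representation $\rho$ with $\rotz(\rho(a))=r$, $\rotz(\rho(b))=s$, the homomorphism property gives $\rotz(a^{\alpha_i}) = \alpha_i r$ and $\rotz(b^{\beta_i}) = \beta_i s$.

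The first step is to apply the quasimorphism inequality within each pair $c_i := a^{\alpha_i}b^{\beta_i}$, obtaining
\[
\rotz(c_i) \;\le\; \alpha_i r + \beta_i s + 1,
\]
so that summing over $i=1,\dots,m$ yields $\sum_i \rotz(c_i) \le m + rh_a(w) + sh_b(w)$.

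The second step is to bound $\rotz(w)$ in terms of the $\rotz(c_i)$. Since naive iteration of the quasimorphism across the $m$-fold product would introduce an extra defect of $m-1$, I would instead analyze the pointwise displacement. Writing the telescoping identity
\[
w(x) - x \;=\; \sum_{i=1}^m \bigl(c_i(y_i) - y_i\bigr), \qquad y_i := c_{i+1}\cdots c_m(x),
\]
and using that each displacement function $c_i(y)-y$ has oscillation at most~$1$ with $\inf_y(c_i(y)-y) \le \rotz(c_i)$, I would choose the base point so that, along the orbit of $x$, the intermediate points $y_i$ lie in the infimum regime of their respective $c_i$. Combined with $\rotz(w) \le \sup_x(w(x)-x)$, this gives $\rotz(w) \le \sum_i \rotz(c_i)$ and hence the claimed inequality.

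The main obstacle is that a single $x$ generally cannot place each $y_i$ simultaneously at an infimum point of $c_i$'s displacement. The cleanest way around this is to pass to the $n$-th iterate $w^n$, decomposed as a product of $mn$ copies cycling through the $c_i$'s, and use $\rotz(w) = \lim_{n\to\infty}(w^n(x)-x)/n$: averaging the intermediate displacements along the $w$-orbit (which, by a Birkhoff-type argument on the circle, picks out the rotation number rather than the supremum of each $c_i(y)-y$) yields a bound $w^n(x) - x \le n\sum_i \rotz(c_i) + O(1)$, and dividing by $n$ and letting $n\to\infty$ absorbs the boundary error. Taking the supremum over representations $\rho$ gives $R(w,r,s) \le m(w) + rh_a(w) + sh_b(w)$.
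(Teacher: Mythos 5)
The first step of your argument is fine: $\rotz(c_i)\le \alpha_i r+\beta_i s+1$ follows from the defect-$1$ quasimorphism bound, and summing gives $\sum_i\rotz(c_i)\le m+rh_a(w)+sh_b(w)$. The gap is in the second step, where you assert $\rotz(w)\le\sum_i\rotz(c_i)$. That inequality is \emph{false} for general lifts $c_i\in\homeo^+(S^1)^\sim$: if $c_1$ has its only lift of a fixed point at $0$ and $c_2$ at $1/2$, both moving every other point forward, then $\rotz(c_1)=\rotz(c_2)=0$ but $c_1c_2(x)>x$ for all $x$, so $\rotz(c_1c_2)>0$. The quasimorphism defect $1$ in each composition is genuinely present and cannot be averaged away. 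Your Birkhoff heuristic identifies the wrong limit: writing $\rotz(w)=\int d_w\,d\mu_w=\sum_i\int d_{c_i}\,d\nu_i$ for a $w$-invariant $\mu_w$, the measures $\nu_i$ (pushforwards of $\mu_w$ under suffixes of $w$) are \emph{not} $c_i$-invariant, so $\int d_{c_i}\,d\nu_i$ is not $\rotz(c_i)$; it can be anywhere in $[\inf d_{c_i},\sup d_{c_i}]$, an interval of length nearly $1$ that merely contains $\rotz(c_i)$. Bounding by $\sup d_{c_i}<\rotz(c_i)+1$ only recovers a defect proportional to $m$, not zero. The point of the lemma is precisely that the defect can be cut from $2m-1$ (the naive telescoping over the $2m$ syllables $a^{\alpha_i},b^{\beta_i}$) down to $m$; that improvement requires using the fact that all the $a$-syllables are powers of the \emph{same} $a$ (and similarly for $b$), which your decomposition into abstract blocks $c_i$ discards entirely. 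The paper's proof captures this through topology: the chain $w-a^{h_a}-b^{h_b}$ is null-homologous with $\scl\le m(w)/2$, and generalized Bavard duality applied to the rotation quasimorphism (which has defect $\le 1$ by Milnor--Wood) yields exactly $\rotz(w)-rh_a-sh_b\le 2\,\scl(w-a^{h_a}-b^{h_b})\le m$. You will not be able to reproduce that bound from pointwise displacement estimates alone.
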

\begin{proof}
The proof is elementary, given some facts from the theory of stable commutator
length (denoted $\scl$, see Definition~\ref{scl_definition}); 
we will return to this connection in \S~\ref{immersion_subsection}, and
the reader can consult this section or \cite{Calegari_scl} for definitions and details.

The chain $w - a^{h_a} - b^{h_b}$ is homologically trivial, and its stable
commutator length is easily shown to satisfy the inequality
$\scl(w - a^{h_a} - b^{h_b}) \le m(w)/2$. On the other hand, the function 
$\rotz$ is a homogeneous quasimorphism with defect at most $1$ (this is
a restatement of the Milnor--Wood inequality in the language of quasimorphisms), 
so the inequality follows from generalized Bavard duality 
(\cite{Calegari_scl}, Thm.~2.79).
\end{proof}

\begin{remark}
Evidently Lemma~\ref{counting_inequality} is implied by the stronger inequality
$$R(w,r,s) \le 2\cdot\scl(w-a^{h_a}-b^{h_b}) + rh_a(w) + sh_b(w)$$ 
If $r$ and $s$ are rational, so is the right hand side of this inequality.
\end{remark}

\begin{lemma}\label{maximum_achieved}
For any $w,r,s$ the maximum $R(w,r,s)$ is achieved on some representation.
\end{lemma}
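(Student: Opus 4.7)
The plan is a compactness argument. I would choose a sequence $\rho_n: F \to \homeo^+(S^1)^\sim$ with $\rotz(\rho_n(a)) = r$, $\rotz(\rho_n(b)) = s$, and $\rotz(\rho_n(w)) \to R(w,r,s)$ (which is finite by Lemma~\ref{counting_inequality}), then extract a subsequential limit $\tilde{\rho}$ and check that it realizes the supremum. Since $\homeo^+(S^1)^\sim$ is not compact and $\rotz$ is only continuous under $C^0$ convergence, I would not pursue $C^0$ convergence directly; instead I would rely on pointwise monotone convergence, where Helly-type selection is available, and then repair the limit using semi-conjugacy.

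First I would normalize. Conjugating each $\rho_n$ by an element of $\homeo^+(S^1)^\sim$ leaves all rotation numbers unchanged by Proposition~\ref{elementary_properties_of_rot}(1), and allows me to arrange that $\rho_n(a)(0)$ lies in a fixed bounded interval. Each $\rho_n(g)$ is a monotone self-map of $\R$ commuting with integer translations, hence determined by its restriction to $[0,1]$, a monotone function of bounded variation. Enumerating $F = \{g_1, g_2, \ldots\}$ and applying Helly's selection theorem diagonally, I extract a subsequence along which $\rho_n(g)$ converges pointwise on a fixed countable dense set $D \subset \R$ to a monotone map $\varphi_g: D \to \R$ commuting with integer translations, for every $g \in F$. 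Because the $\rho_n$ are homomorphisms, the limits satisfy $\varphi_{gh} = \varphi_g \circ \varphi_h$ on $D$, and this relation extends to the monotone extensions of $\varphi_g$ to all of $\R$.

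The limits $\varphi_a, \varphi_b$ need not be homeomorphisms: they may admit plateaus (intervals of constancy) and jumps (points of discontinuity). The standard ``minimal model'' construction collapses plateaus to points and fills in jumps with open intervals, simultaneously for the whole $F$-orbit of $D$, producing homeomorphisms $\tilde{\varphi}_a, \tilde{\varphi}_b \in \homeo^+(S^1)^\sim$ which are monotone semi-conjugate to $\varphi_a, \varphi_b$. Semi-conjugacy preserves $\rotz$, and $\rotz$ itself is continuous under pointwise monotone convergence on a dense set, as one sees from the Cesàro formula $\rotz(\varphi) = \lim_k \varphi^k(0)/k$. Hence $\tilde{\rho}: a \mapsto \tilde{\varphi}_a$, $b \mapsto \tilde{\varphi}_b$ is an honest representation with $\rotz(\tilde{\rho}(a)) = r$, $\rotz(\tilde{\rho}(b)) = s$, and $\rotz(\tilde{\rho}(w)) = \lim_n \rotz(\rho_n(w)) = R(w,r,s)$.

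The main obstacle will be carrying out the plateau/jump collapse coherently for the entire group at once, rather than independently for $a$ and $b$: the collapse data must come from a single equivalence relation on $\R$ that is invariant under every $\varphi_g$, so that the resulting $\tilde{\varphi}_a$ and $\tilde{\varphi}_b$ generate a representation whose value at each $g$ agrees with the collapsed $\varphi_g$. This simultaneous collapse is essentially the semi-conjugacy classification for circle actions; once it is invoked, all the remaining verifications (preservation of group relations, of rotation numbers of $a$ and $b$, and of $\rotz(w)$ in the limit) are routine.
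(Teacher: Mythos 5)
Your argument is correct and is in essence the paper's own proof: the paper compactifies by passing to the space of circular orders on an orbit (on which $\rotz$ is continuous) and then realizes the extremal circular order by an action, which is exactly your Helly-limit-plus-simultaneous-collapse, phrased more abstractly via semi-conjugacy. The one step you state too quickly --- continuity of $\rotz$ under pointwise monotone convergence --- follows not from the bare Ces\`aro formula but from the uniform bound $|\varphi^k(0)-k\,\rotz(\varphi)|\le 1$ valid for all monotone degree-one maps, which lets you interchange the limits in $n$ and $k$; with that observation your proof closes up.
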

\begin{proof}
Rotation number depends (continuously) 
only on the circular order derived by the action of the group
on any orbit. The space of such circular orders is compact, and so is the subset for which
$a$ and $b$ have a specific rotation number (mod $\Z$). By Lemma~\ref{unique_lift}
and compactness, $\rotz(w)$ is maximized on some circular order (which can be realized
in many ways by an action).
\end{proof}

\begin{lemma}\label{lower_semicontinuous}
For fixed $w$, the function $R(w,r,s)$ is lower semi-continuous in
$r$ and $s$.
\end{lemma}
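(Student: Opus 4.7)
The plan is to prove lower semi-continuity at any point $(r_0,s_0)$ by a direct perturbation argument based on Proposition~\ref{elementary_properties_of_rot}. Given $\epsilon > 0$, use the definition of supremum to pick a representation $\rho^*$ with $\rotz(\rho^*(a))=r_0$, $\rotz(\rho^*(b))=s_0$, and $\rotz(\rho^*(w)) > R(w,r_0,s_0)-\epsilon$. The goal is to construct, for each $(r,s)$ in a sufficiently small neighborhood of $(r_0,s_0)$, a competitor representation $\rho^{r,s}$ realizing the rotation numbers $(r,s)$ for the generators and satisfying $\rotz(\rho^{r,s}(w))>R(w,r_0,s_0)-2\epsilon$; since $R(w,r,s)\geq\rotz(\rho^{r,s}(w))$, letting $\epsilon\to 0$ then yields the conclusion.

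The natural construction is to compose the generators of $\rho^*$ with rigid rotations. Set $f(t):=\rotz(R_t\rho^*(a))$ and $g(u):=\rotz(R_u\rho^*(b))$; by part~(4) of Proposition~\ref{elementary_properties_of_rot}, $f$ and $g$ are continuous, non-decreasing, and (after allowing $t,u$ to range over all of $\R$) surjective onto $\R$. Pick $t(r), u(s)$ with $f(t(r))=r$ and $g(u(s))=s$, and define $\rho^{r,s}(a):=R_{t(r)}\rho^*(a)$, $\rho^{r,s}(b):=R_{u(s)}\rho^*(b)$. Since composition in $\homeo^+(S^1)^\sim$ and the rotation number $\rotz$ are both continuous in the $C^0$ topology (Proposition~\ref{elementary_properties_of_rot}(1)), once we verify that $\rho^{r,s}$ is $C^0$-close to $\rho^*$ as $(r,s)\to(r_0,s_0)$, the estimate $\rotz(\rho^{r,s}(w))\approx\rotz(\rho^*(w))$ follows and closes the argument.

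The main technical hurdle is precisely this $C^0$-closeness, which amounts to the shifts $t(r),u(s)$ tending to zero as $(r,s)\to(r_0,s_0)$. This can fail when $f$ is locally constant on an interval $[t^-,t^+]\ni 0$, an Arnol{}'d-tongue phenomenon that occurs, e.g., when $\rho^*(a)$ has a stable periodic orbit with rotation number $r_0$; in that case the minimal $t(r)$ realizing $r>r_0$ accumulates at $t^+>0$ rather than at $0$. To handle this, observe that the entire family $t\mapsto(R_t\rho^*(a),\rho^*(b))$ for $t\in[t^-,t^+]$ consists of valid representations at $(r_0,s_0)$, so the continuous function $t\mapsto\rotz((R_t\rho^*(a),\rho^*(b))(w))$ is bounded above by $R(w,r_0,s_0)$ throughout this interval and attains a value exceeding $R(w,r_0,s_0)-\epsilon$ at $t=0$. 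By refining the choice of $\rho^*$ within the family of $\epsilon$-near-maximizers, using Lemma~\ref{maximum_achieved} together with the compactness of the space of circular orders, one arranges that this function also remains within $O(\epsilon)$ of the supremum at the endpoints $t^\pm$ of its flat interval (and similarly at the $b$-side endpoints). This ensures the limiting value of $\rotz(\rho^{r,s}(w))$ as $(r,s)\to(r_0,s_0)$ does not drop below $R(w,r_0,s_0)-O(\epsilon)$, completing the estimate.
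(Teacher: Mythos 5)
The direction you are arguing is the wrong one, and in fact the statement you set out to prove is false. The paper's own opening sentence of the proof spells out the intended meaning: ``the value of a limit is at least as big as the limit of the values,'' i.e.\ $R(w,r,s) \geq \limsup_{(r_i,s_i)\to(r,s)} R(w,r_i,s_i)$. In the usual convention this inequality is called \emph{upper} semi-continuity, but that terminological mismatch is a quirk of the paper's phrasing; what matters is the direction. Your proposal instead aims at the opposite inequality --- that for $(r,s)$ near $(r_0,s_0)$ one has $R(w,r,s) > R(w,r_0,s_0) - 2\epsilon$ --- which is genuine lower semi-continuity, and which fails for this function. Concretely, by Theorem~\ref{ab_theorem} one has $R(ab,1/2,1/2) = 3/2$, while $R(ab,1/2-\delta,1/2-\delta)=1$ for all small $\delta>0$, so $R$ drops by $1/2$ as one approaches $(1/2,1/2)$ from below; this is exactly the ``slippery'' phenomenon discussed in \S\ref{slippery_subsection}.

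The obstruction you correctly flag --- that when $\rho^*(a)$ has a stable periodic orbit, $f(t)=\rotz(R_t\rho^*(a))$ has a flat interval $[t^-,t^+]$ at value $r_0$, so $t(r)$ accumulates at $t^-$ rather than at $0$ as $r\uparrow r_0$ --- is precisely this phase-locking mechanism, and the ``refinement of $\rho^*$'' you propose cannot repair it. For \emph{any} choice of $\rho^*$ the function $g(t):=\rotz\bigl((R_t\rho^*(a),\rho^*(b))(w)\bigr)$ satisfies $g(t^-)\le R(w,r_0-,s_0)$: for $t<t^-$ the pair $(R_t\rho^*(a),\rho^*(b))$ has $\rotz(a)<r_0$, so $g(t)\le R(w,f(t),s_0)$, and continuity of $g$ gives the bound at $t^-$. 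This bound is uniform over all $\epsilon$-near-maximizers, so at a point of one-sided discontinuity no refinement of $\rho^*$ can make $g(t^-)$ lie within $O(\epsilon)$ of $R(w,r_0,s_0)$. The gap is intrinsic, not an artifact of your choice.

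The paper's actual proof is a short compactness argument for the opposite inequality: take $(r_i,s_i)\to(r,s)$ and, by Lemma~\ref{maximum_achieved}, maximizing representations $\rho_i$; pass to a subsequence converging (in the compact space of circular orders, up to semi-conjugacy) to a limit $\rho$; by continuity of $\rotz$ one gets $\rotz(\rho(a))=r$, $\rotz(\rho(b))=s$, and $\rotz(\rho(w))=\lim_i\rotz(\rho_i(w))$, hence $R(w,r,s)\ge\limsup_i R(w,r_i,s_i)$. No perturbation of the dynamics is involved.
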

\begin{proof}
Lower semi-continuity means that the value of a limit is a least as big as the limit
of the values. Let $r_i,s_i \to r,s$ and let $\rho_i$ be representations
for which the maximum is achieved. Some subsequence of the $\rho_i$ converges
up to semi-conjugacy to a limit $\rho$, and $\rotz(\rho(w))=\lim_i \rotz(\rho_i(w))$.
\end{proof}

\begin{notation}
Given $w\in F$ and $r,s\in \R$, we let $X(w,r,s)$ denote the set of
values of $\rotz(\rho(w))$ over all $\rho:F \to \homeo^+(S^1)^\sim$ for which
$\rotz(\rho(a))=r$ and $\rotz(\rho(b))=s$.
\end{notation}

Note that $h_a(w)r+h_b(w)s \in X(w,r,s)$ for all $w,r,s$.
The following proposition shows that $X(w,\cdot,\cdot)$ 
can be recovered from $R(w,\cdot,\cdot)$.

\begin{proposition}\label{interchange}
For any $w$ and fixed $r,s$, the set $X(w,r,s)$ is a compact interval, with maximum $R(w,r,s)$
and minimum $-R(w,-r,-s)$.
\end{proposition}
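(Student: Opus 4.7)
The plan is to establish three claims: (i) the supremum $R(w,r,s)$ is attained; (ii) the infimum of $X(w,r,s)$ equals $-R(w,-r,-s)$ and is also attained; (iii) $X(w,r,s)$ is connected. Together these give $X(w,r,s)=[-R(w,-r,-s),\,R(w,r,s)]$.

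Claim (i) is immediate from Lemma~\ref{maximum_achieved}. For claim (ii), the main tool is the involution $\tilde\sigma:\R\to\R$, $x\mapsto -x$, which covers the orientation-reversing map $z\mapsto -z$ of $S^1$ and normalizes $\homeo^+(S^1)^\sim$ (as $\homeo^+$ is normal in $\homeo$). A direct computation from $\rotz(f)=\lim_{n\to\infty} f^n(0)/n$ shows $\rotz(\tilde\sigma f\tilde\sigma^{-1})=-\rotz(f)$. Hence $\rho\mapsto\tilde\sigma\,\rho(\cdot)\,\tilde\sigma^{-1}$ is an involutive bijection between representations with $(\rotz(a),\rotz(b))=(r,s)$ and those with $(\rotz(a),\rotz(b))=(-r,-s)$, negating $\rotz(w)$; so $X(w,r,s)=-X(w,-r,-s)$, and $\inf X(w,r,s)=-R(w,-r,-s)$ is attained by the image of any maximizer at $(-r,-s)$ provided by Lemma~\ref{maximum_achieved}.

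The substantive content is (iii). Given $\rho_0,\rho_1\in\mathcal{R}:=\{\rho:\rotz(a)=r,\rotz(b)=s\}$, I plan to exhibit a continuous family $\{\rho_t\}_{t\in[0,1]}\subset\mathcal{R}$ joining them; by $C^0$-continuity of $\rotz$, the function $t\mapsto\rotz(\rho_t(w))$ is then continuous, and IVT produces every intermediate value. Since $\homeo^+(S^1)^\sim$ is contractible, choose continuous paths $A_t,B_t$ from $\rho_0(a),\rho_0(b)$ to $\rho_1(a),\rho_1(b)$ in this group. These leave $\mathcal{R}$ in general, so I correct them with rotations. By item (4) of Proposition~\ref{elementary_properties_of_rot}, for each $t$ the set $S_A(t):=\{\alpha\in\R:\rotz(R_\alpha A_t)=r\}$ is a nonempty closed interval, containing $0$ when $t\in\{0,1\}$. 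I argue that the locus $S_A:=\{(t,\alpha):\rotz(R_\alpha A_t)=r\}\subset [0,1]\times\R$ is path-connected, so one may select a continuous $\alpha:[0,1]\to\R$ inside $S_A$ with $\alpha(0)=\alpha(1)=0$; likewise obtain $\beta(t)$ for $B_t$. Then $\rho_t(a):=R_{\alpha(t)}A_t$, $\rho_t(b):=R_{\beta(t)}B_t$ is the desired continuous path in $\mathcal{R}$.

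The hard part will be the continuous selection $\alpha(t)$ with the prescribed boundary values: the lower and upper envelopes of $S_A(t)$ are only semicontinuous (LSC and USC respectively), so they can fail to be continuous at $t$ where the level-$r$ plateau of $\alpha\mapsto\rotz(R_\alpha A_t)$ widens. The resolution is to establish path-connectedness of $S_A$ directly, using joint continuity of $(t,\alpha)\mapsto\rotz(R_\alpha A_t)$ together with slicewise monotonicity, routing $\alpha(t)$ around obstructions by temporarily deviating from $0$ through the interior of the plateau region.
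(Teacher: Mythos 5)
Your overall architecture is exactly the paper's: maximum achieved by Lemma~\ref{maximum_achieved}, minimum via conjugation by an orientation-reversing involution (giving $X(w,r,s)=-X(w,-r,-s)$), and connectedness of $X(w,r,s)$ via path-connectedness of the space of representations with prescribed $\rotz(a),\rotz(b)$. Parts (i) and (ii) are correct. The paper, for its part, simply \emph{asserts} the path-connectedness in one sentence, so all the substance of your write-up is concentrated in (iii) --- and that is where there is a genuine gap. Your final step claims that $S_A=\{(t,\alpha):\rotz(R_\alpha A_t)=r\}$ is path-connected, to be proved ``directly'' by ``routing around obstructions through the interior of the plateau region.'' This is not an argument. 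What the slice structure (each $S_A(t)$ a nonempty compact interval, $\ell$ LSC below, $u$ USC above) actually gives you is that $S_A$ is compact and \emph{connected} (any clopen partition would induce a clopen partition of $[0,1]$ by projection, since each slice is connected). But connected does not imply path-connected: the closure of the topologist's sine curve is precisely a compact set whose slices are nonempty intervals with LSC lower and USC upper envelopes. So the danger is not merely that the selection $\alpha(t)$ must deviate from $0$; it is that \emph{no} path may exist, and nothing in your sketch rules this out.

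The gap is repairable, and the repair splits by cases exactly as elsewhere in the paper. First note that since $F$ is free, $\mathcal R=\rotz^{-1}(r)\times\rotz^{-1}(s)$, so it suffices to path-connect each level set of $\rotz$ separately (move $a$ first, then $b$); this also removes the need for your $\alpha$- and $\beta$-corrections to share a parametrization. If $r$ is irrational, every slice $S_A(t)$ is a single point, because $\theta\mapsto\rotz(R_\theta f)$ is strictly increasing wherever its value is irrational (this is the fact invoked in the paper's remark following Lemma~\ref{smooth_family}); the selection $\alpha(t)$ is then a function with closed graph, hence continuous, and your argument closes. If $r=p/q$ is rational, abandon the selection entirely: given $f_0,f_1$ with $\rotz=p/q$, first conjugate $f_1$ by a path in $\homeo^+(S^1)^\sim$ (which preserves $\rotz$) so that its period-$q$ orbit coincides, with the same combinatorics, with one for $f_0$; then the convex combinations $(1-u)f_0+uf_1$ all share that periodic orbit and hence all have rotation number $p/q$. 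This is the same interpolation used in the proof of Lemma~\ref{smooth_family}. With either of these substituted for your ``routing'' step, the proof is complete and matches the paper's intent.
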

\begin{proof}
The set of representations for which the rotation numbers of $a$ and $b$ are
prescribed is path-connected. This implies that $X(w,r,s)$ is connected, and therefore
an interval. The maximum is $R(w,r,s)$ by definition, and the minimum is 
$-R(w,-r,-s)$ because changing the orientation of the circle negates rotation number.
\end{proof}

\subsection{Approximation by smooth diffeomorphisms}

For later use, we state and prove some elementary facts about approximating
homeomorphisms by diffeomorphisms.

\begin{lemma}\label{smooth_approximation}
Any $\varphi\in \homeo^+(S^1)^\sim$ can be $C^0$ approximated by a $C^\infty$ diffeomorphism
$\varphi'\in \homeo^+(S^1)^\sim$ with the same rotation number.
\end{lemma}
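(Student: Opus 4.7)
The plan is to produce the smooth approximation in two stages: first ignore the rotation number and build \emph{any} smooth diffeomorphism $\psi$ close to $\varphi$ in the $C^0$ norm, then correct the rotation number by composing with a small rigid rotation.

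For the first stage I would start with the equivariance $\varphi(x+1)=\varphi(x)+1$, so $\varphi$ is determined by its restriction to $[0,1]$ where it is uniformly continuous and strictly increasing. First approximate $\varphi$ by an equivariant piecewise-linear homeomorphism $\psi_0$: partition $[0,1]$ into finitely many subintervals whose images under $\varphi$ have length $<\epsilon/4$, linearly interpolate between the chosen nodes, and extend by equivariance. Then smooth $\psi_0$ in a $\delta$-neighborhood of each breakpoint by a standard bump convolution, being careful to keep the derivative strictly positive (for instance, by convolving the derivative of $\psi_0$, which is a sum of positive step functions, against a smooth positive mollifier and integrating). The result is a $C^\infty$ diffeomorphism $\psi\in\homeo^+(S^1)^\sim$ with $\|\psi-\varphi\|_{C^0}<\epsilon/2$.

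For the second stage I use continuity of $\rotz$ in the $C^0$ topology (Proposition~\ref{elementary_properties_of_rot}(1)): for $\epsilon$ chosen small enough one gets $\eta:=\rotz(\psi)-\rotz(\varphi)$ arbitrarily small. Now appeal to Proposition~\ref{elementary_properties_of_rot}(4): the map $t\mapsto \rotz(R_t\circ\psi)$ is continuous and nondecreasing, takes the value $\rotz(\psi)$ at $t=0$, and by the same statement applied on $[-1,0]$ and $[0,1]$ surjects onto $[\rotz(\psi)-1,\rotz(\psi)+1]$. Hence there is some $t$ with $\rotz(R_t\circ\psi)=\rotz(\varphi)$, and by monotonicity and continuity $|t|\to 0$ as $\eta\to 0$. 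Setting $\varphi':=R_t\circ\psi$ gives a $C^\infty$ diffeomorphism in $\homeo^+(S^1)^\sim$ with
\[
\|\varphi'-\varphi\|_{C^0}\le |t|+\|\psi-\varphi\|_{C^0}<\epsilon,
\]
and $\rotz(\varphi')=\rotz(\varphi)$ exactly.

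The only step requiring any real care is the smoothing at the breakpoints of the piecewise-linear approximation, since a naive convolution of $\psi_0$ itself might not remain a homeomorphism or might fail to have everywhere-positive derivative; the clean fix is to mollify the derivative (which is positive and piecewise constant) and then integrate. Everything else is essentially formal given the properties of $\rotz$ already recorded in Proposition~\ref{elementary_properties_of_rot}.
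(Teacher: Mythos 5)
Your first stage (the piecewise-linear approximation followed by mollifying the derivative) is fine. The gap is in the second stage, specifically in the claim that ``by monotonicity and continuity $|t|\to 0$ as $\eta\to 0$.'' Continuity and monotonicity of $f(t):=\rotz(R_t\circ\psi)$ do \emph{not} imply that the solution set of $f(t)=\rotz(\varphi)$ approaches $t=0$ as $f(0)=\rotz(\psi)$ approaches $\rotz(\varphi)$: the function $f$ is in general a devil's staircase, locally constant on an interval of definite length around $t=0$ whenever $\psi$ is phase-locked at its (typically rational) rotation number --- this mode-locking is exactly the phenomenon the whole paper is about. For a fixed $\psi$ the assertion is simply false, and since your $\psi=\psi_\epsilon$ varies with $\epsilon$ you would need a uniform estimate that nothing in your argument provides. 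Without a bound $|t|\le\epsilon/2$ you cannot conclude $\|\varphi'-\varphi\|_{C^0}<\epsilon$, so as written the proof does not close.

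The step can be repaired, but the repair uses a different property of $\rotz$ from the one you invoke: monotonicity of rotation number under the \emph{pointwise order} (if $f\le g$ pointwise then $\rotz(f)\le\rotz(g)$), rather than $C^0$-continuity. Since $\|\psi-\varphi\|_{C^0}<\epsilon/2$ you have $R_{-\epsilon/2}\circ\psi\le\varphi\le R_{\epsilon/2}\circ\psi$ pointwise, hence $f(-\epsilon/2)\le\rotz(\varphi)\le f(\epsilon/2)$, and the intermediate value theorem gives some $t\in[-\epsilon/2,\epsilon/2]$ with $f(t)=\rotz(\varphi)$; now the triangle inequality finishes. Note that this is essentially the paper's own proof in a different guise: the paper approximates $\varphi$ from below and above by smooth $\varphi^-\le\varphi\le\varphi^+$ within $\epsilon$, observes $\rotz(\varphi^-)\le\rotz(\varphi)\le\rotz(\varphi^+)$ by the same domination principle, and applies the intermediate value theorem to the interpolating family $t\varphi^-+(1-t)\varphi^+$, every member of which stays within $\epsilon$ of $\varphi$. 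The lesson is that the one-sided (order-theoretic) control is what keeps the rotation-number correction from pushing you out of the $C^0$-neighborhood; $C^0$-continuity of $\rotz$ alone cannot do this.
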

\begin{proof}
First of all, it is obvious that $\varphi$ can be $C^0$ approximated by $C^\infty$
diffeomorphisms $\varphi^+$ and $\varphi^-$ in such a way that 
$|\varphi(p)-\varphi^\pm(p)|<\epsilon$ for all $p$, while $\varphi^+(p)$ is to the right
of $\varphi(p)$ and $\varphi^-(p)$ is to the left of $\varphi(p)$. One way to
do this is to approximate the graph of $\varphi$ by smooth graphs from above and below.
Evidently $\rotz(\varphi^-) \le \rotz(\varphi) \le \rotz(\varphi^+)$. 

Now for $t\in [0,1]$ define $\varphi_t(p) = t\varphi^-(p) + (1-t)\varphi^+(p)$. This
defines a smooth family interpolating between $\varphi^-$ and $\varphi^+$, so some element
of this family has the same rotation number as $\varphi$. Moreover, every element of
this family satisfies $|\varphi_t(p) -\varphi(p)|<\epsilon$ for all $p$.
\end{proof}

\begin{lemma}\label{smooth_family}
If both $r$ and $s$ are either rational or Herman numbers, every $\theta$ in the interior
of $X(w,r,s)$ is realized by a smooth representation. If both $r,s$ are Herman
numbers, every $\theta$ in the interior of $X(w,r,s)$ is realized by a representation
in which $a$ and $b$ are smoothly conjugate to $R_r$ and $R_s$ respectively.
\end{lemma}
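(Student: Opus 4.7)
The plan is to sandwich $\theta$ strictly between the rotation numbers of two smooth representations and then conclude by the intermediate value theorem along a continuous family of smooth representations in which the rotation numbers of $a$ and $b$ stay pinned at $r$ and $s$. Since $\theta$ lies in the interior of $X(w,r,s)$, Proposition~\ref{interchange} produces (possibly non-smooth) representations $\rho_\pm$ with the prescribed rotation numbers on generators and with $\rotz(\rho_-(w)) < \theta < \rotz(\rho_+(w))$; applying Lemma~\ref{smooth_approximation} to each generator replaces each image by a nearby smooth diffeomorphism with the same rotation number, giving smooth representations $\rho'_\pm$ that, by $C^0$-continuity of $\rotz$, still satisfy the strict sandwich. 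All that remains is to connect $\rho'_-$ to $\rho'_+$ through a continuous family $\rho_t$ of smooth representations preserving the rotation numbers of the generators, whereupon IVT applied to $t \mapsto \rotz(\rho_t(w))$ yields some $t^*$ with $\rotz(\rho_{t^*}(w)) = \theta$.

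I would build the path one generator at a time. When $r$ is a Herman number, Herman--Yoccoz writes the two smooth endpoints as $A_\pm = h^a_\pm R_r (h^a_\pm)^{-1}$ with $h^a_\pm \in \textnormal{Diff}^\infty_+(S^1)$; since this group is connected, any continuous path $h^a_t$ between $h^a_-$ and $h^a_+$ yields a family $\rho_t(a) = h^a_t R_r (h^a_t)^{-1}$ whose rotation number is identically $r$ and all of whose members are smoothly conjugate to $R_r$. When both $r$ and $s$ are Herman, the analogous construction for $b$ produces a family along which both generators are smoothly conjugate to rigid rotations at every time, so the IVT conclusion simultaneously yields the stronger second sentence of the lemma.

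I expect the main obstacle to be the rational case, where Herman--Yoccoz fails and the fiber $\rotz^{-1}(p/q)$ has a plateau structure that precludes any naive ``interpolate-then-correct-by-a-rotation'' argument. Here I would instead use the combinatorics of periodic orbits directly: if $r=p/q$, each of $A_\pm$ has a periodic orbit of length $q$, and by smoothly conjugating by some $h^a_\pm$ I may assume this orbit is the standard orbit $\{i/q\}_{i=0}^{q-1}$ with the cyclic action forced by the rotation number, which pins down the lifts so that $\tilde A_\pm(i/q) = (i+p)/q$ for every $i\in\Z$. The pointwise linear interpolation of the lifts then fixes this lifted orbit and therefore has rotation number exactly $p/q$; conjugating back by any smooth path $h^a_t$ from $h^a_-$ to $h^a_+$ in the connected group $\textnormal{Diff}^\infty_+(S^1)$ yields a smooth path from $A_-$ to $A_+$ with rotation number identically $r$. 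The same construction handles $b$, and the mixed case combines the two independent constructions on the two generators.
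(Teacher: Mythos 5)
Your proposal is correct and follows essentially the same route as the paper's proof: the paper first observes (via Lemma~\ref{smooth_approximation}) that smooth representations realize a dense subset of $X(w,r,s)$, then proves path-connectivity of the set of smooth diffeomorphisms with a given rational or Herman rotation number (the rational case by preserving a periodic orbit, the Herman case by connecting conjugating maps), and concludes by ``dense plus connected contains the interior.'' Your sandwich-and-intermediate-value-theorem packaging is logically equivalent, and your more explicit construction in the rational case (conjugate to standardize the periodic orbit to $\{i/q\}$, interpolate lifts linearly, conjugate back along a path in $\textnormal{Diff}^\infty_+(S^1)$) is a correct unpacking of what the paper leaves as ``we can easily find a $1$-parameter family.''
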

\begin{proof}
By Lemma~\ref{smooth_approximation}, there are smooth representations with
$\rotz(a)=r$ and $\rotz(b)=s$ and for which $\rotz(w)$ is as close to any $\theta$ in $X(w,r,s)$
as we like; hence we can certainly realize a {\em dense} subset of $\theta$ in $X(w,r,s)$
by smooth representations. On the other hand, the set of diffeomorphisms whose rotation
number is equal to a fixed rational or Herman number is path connected. 

Indeed, if $\varphi$ and $\varphi'$ are both smooth with the same rational rotation number,
they both have a finite orbit with the same dynamics. We can easily find a $1$-parameter
family $\varphi_t$ interpolating between them with the same finite orbit, and hence
the same rotation number.

If $\varphi$ and $\varphi'$ are both smooth with the same irrational Herman rotation number
$r$, they are both smoothly conjugate to $R_r$. The two conjugating maps can be joined by
a smooth path of conjugating maps, exhibiting a $1$-parameter family $\varphi_t$ all
with rotation number $r$ (in fact, all conjugate to $R_r$) interpolating between them.

In conclusion, the set of $\theta \in X(w,r,s)$ realized by smooth representations is dense and
connected. The lemma follows.
\end{proof}

\begin{remark}
Shigenori Matsumoto communicated the following short proof that for
any $r$ and $s$ (not necessarily rational or Herman irrational), 
every $\theta$ in the interior of $X(w,r,s)$ is realized by a smooth representation.
This follows by showing that the space of $C^\infty$ diffeomorphisms with
a given rotation number $r$ is pathwise connected. For rational $r$ this is
obvious; for irrational $r$, let $f$ be any smooth diffeomorphism
with $\rot(f)=r$, and define 
$$f_{t,\theta}=R_\theta \circ ((1-t)f + t\,\id)$$
Because $r$ is irrational, 
for each $t$ there is a {\em unique} $\theta(t)$ so that 
$\rot(f_{t,\theta(t)})=r$; see e.g.\/ \cite{Herman}.
Since the subset $\lbrace (t,\theta(t))\rbrace$ is closed in $[0,1]\times S^1$
(being the preimage of $r$ under the continuous function $\rot$) it follows
that $t \to \theta(t)$ is continuous.
\end{remark}

\section{Positive words}\label{positive_section}

\subsection{Positivity and rationality}

\begin{definition}
A (necessarily reduced) word $w$ in $F$ is {\em positive} if it contains 
no $a^{-1}$ or $b^{-1}$ (i.e.\/ it contains only $a$'s and $b$'s).
\end{definition}

In this section we adhere to the convention that $w$ is {\em positive} unless we explicitly
say otherwise. In this case we are able to obtain complete and precise 
answers, and a surprisingly rich structure theory.

The first surprise is the following Rationality Theorem, which says that for $r,s$
rational and $w$ positive, $R(w,r,s)$ is rational, and there is an {\it a priori}
bound on its denominator. In fact, it turns out that the computation of $R(w,r,s)$
in any given case can be reduced to a finite combinatorial question!

\begin{theorem}[Rationality Theorem]\label{positive_rational}
Suppose $w$ is positive. If $r$ and $s$ are rational, so is $R(w,r,s)$.
Moreover, if $w$ is not a power of $a$ or $b$,
the denominator of $R(w,r,s)$ is no bigger than the minimum of the denominators
of $r$ and of $s$.
\end{theorem}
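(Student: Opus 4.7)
The plan is to leverage positivity of $w$ to replace an extremal representation by an ``upper envelope'' which dominates the original dynamics pointwise by monotonicity, and whose combinatorics on a finite invariant set forces the rotation number of $w$ to be rational with the advertised denominator bound. Write $r=p_1/q_1$ and $s=p_2/q_2$ in lowest terms, and, since $w$ is not a power of $a$ or $b$, cyclically conjugate $w$ so that it ends in the letter $b$. By Lemma~\ref{maximum_achieved} fix $\rho$ realizing $R(w,r,s)$; by Proposition~\ref{elementary_properties_of_rot}(2) pick discrete subsets $\til{O}_a,\til{O}_b\subset\R$ invariant under $\rho(a)$ resp.\ $\rho(b)$ and under integer translation, with $q_1$ resp.\ $q_2$ points per fundamental domain. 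Define
\[
a^+(x)=\begin{cases}\rho(a)(x), & x\in\til{O}_a,\\ \rho(a)(y'), & x\in(y,y')\text{ a gap of }\til{O}_a,\end{cases}
\]
and analogously $b^+$. Each envelope is monotone nondecreasing, commutes with $x\mapsto x+1$, satisfies $\rho(a)\le a^+$ (resp.\ $\rho(b)\le b^+$) pointwise, and sends $\R$ into $\til{O}_a$ (resp.\ $\til{O}_b$).

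Composing through the positive word $w$, the pointwise inequality becomes $\rho(w)\le w^+$, hence $R(w,r,s)=\rotz(\rho(w))\le\rotz(w^+)$. Because $w$ ends in $b$, the envelope $w^+$ takes $\R$ into $\til{O}_b$, so $w^+$ restricts to a monotone cyclic-order-preserving self-map of the $q_2$-element set $\til{O}_b/\Z$; any such self-map has rotation number $k/q$ for some $q\le q_2$. For the matching lower bound, I approximate $a^+,b^+$ from below by PL homeomorphisms $a_\varepsilon,b_\varepsilon$ that pointwise fix $\til{O}_a,\til{O}_b$ and, on each gap $(y,y')$, rise steeply to $\varepsilon$-near $\rho(a)(y')$ just above $y$ and then crawl gently up to $\rho(a)(y')$ at $y'$. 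Then $\rotz(\rho_\varepsilon(a))=r$, $\rotz(\rho_\varepsilon(b))=s$, and $\rho_\varepsilon(w)\nearrow w^+$ pointwise. The periodic orbit of $w^+$ on $\til{O}_b/\Z$ persists as a periodic orbit of $\rho_\varepsilon(w)$ for small $\varepsilon$ (by an intermediate-value argument on a small interval around each orbit point), forcing $\rotz(\rho_\varepsilon(w))=k/q$. Combined with $\rotz(\rho_\varepsilon(w))\le R(w,r,s)\le k/q$, this gives $R(w,r,s)=k/q$. Running the argument instead with $w$ conjugated to end in $a$ replaces $q_2$ by $q_1$, so the reduced denominator of $R(w,r,s)$ is at most $\min(q_1,q_2)$.

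The main technical hurdle is the persistence-of-orbit step, since $w^+$ is genuinely discontinuous at every point of $\til{O}_a\cup\til{O}_b$ and the perturbation $\rho_\varepsilon(w)$ could in principle skip across a discontinuity. If no forward image of $\til{O}_b$ under the action of the intermediate envelopes ever lands exactly at a break-point of the next letter's envelope (a codimension-one condition on $\rho$), the intermediate-value argument goes through cleanly; otherwise, a preliminary perturbation of $\rho$ preserving rotation numbers and chosen periodic orbits removes such coincidences. Once persistence is in hand, rationality and the denominator bound are immediate from the finiteness of $\til{O}_b/\Z$.
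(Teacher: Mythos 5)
Your proposal is correct and is essentially the paper's own argument: the ``upper envelope'' maps $a^+,b^+$ are exactly the paper's $\alpha^+,\beta^+$ built from the periodic orbits $\Sigma_x,\Sigma_y$, the denominator bound comes in both cases from a conjugate of $w^+$ mapping one finite orbit into itself, and the matching lower bound is obtained in both cases by approximating the envelopes from below by genuine homeomorphisms with the prescribed rotation numbers. The persistence-of-orbit issue you flag is the same point the paper handles by taking $\epsilon$ smaller than half the spacing of $\Sigma=\Sigma_x\cup\Sigma_y$ and checking that a suitably chosen orbit of the approximating word stays $\epsilon$-close to the $w^+$-orbit.
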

\begin{proof}
Consider any action of $F$ on $S^1$ for which $\rotz(a)=r$ and $\rotz(b)=s$.
We will show how to modify this action without decreasing $\rotz(w)$, until $w$
has a periodic orbit with period no bigger than that of $a$ or $b$. In fact,
the new $w$ will have a periodic orbit which can be taken to biject (in a natural way) with a
subset of a periodic orbit of either $a$ or $b$. This will prove the proposition.
We call this method of starting with any representation, and modifying the dynamics without
decreasing $\rotz(w)$ until $w$ has some desired property, the {\em method of perturbation}.

\medskip

Since $r$ and $s$ are rational, both $a$ and $b$ have finite orbits. If
$r=p_1/q_1$ then there are points $x_i$ for $0\le i \le q_1-1$ for which
$a(x_i)=x_{i+p_1}$ (indices taken mod $q_1$); similarly there are $y_j$ for 
$0\le j \le q_2-1$ for which $b(y_j) = y_{j+p_2}$ (indices mod $q_2$). 
Denote the union of the $x_i$ by $\Sigma_x$, the union of the $y_j$ by $\Sigma_y$,
and the union of both by $\Sigma$.

We can modify the dynamics of $a$ on the complement of $\Sigma_x$ (and similarly for $b$)
without changing their rotation numbers. Replacing $a$ by some new $a'$ with the
property that $a'(p) \ge a(p)$ for all $p$ cannot decrease $\rotz(w)$,
since only positive powers of $a$ appear in $w$; a similar statement holds for $b$.

Define maps $\alpha^+$ and $\beta^+$ by $\alpha^+(p) = a(x_{i+1})$ for 
$p \in (x_i,x_{i+1}]$ and $\beta^+(p)=b(y_{j+1})$ for $p\in (y_j,y_{i+1}]$.
Note that although these maps are not homeomorphisms,
they are monotone and therefore still have a well-defined rotation number.

Define $w^+$ to be the composition obtained by replacing $a$ and $b$ by $\alpha^+$ and $\beta^+$
in the word $w$. Evidently $\rotz(w^+)\ge \rotz(w)$. On the other hand, for any
$\epsilon$ we can choose $a',b'$ so that $\alpha^+(p)-a'(p)\le \epsilon$ whenever
$p-x_i>\epsilon$ (if $p \in (x_i,x_{i+1}]$) and similarly for $b'$ and $y_j$.
Set $\epsilon$ less than half the distance between distinct elements of $\Sigma$.
Then there is some initial choice of $p$ so that $|(w^+)^n(p)-w^n(p)|<\epsilon$
for any integer $n$, and therefore $\rotz(w^+) = \rotz(w)$.

\medskip

It remains to estimate the denominator of $\rotz(w^+)$. Since $w$ is by hypothesis
not a power of $a$, some conjugate of $w$ ends with $a$, and therefore the
corresponding conjugate of $w^+$
takes $\Sigma_x$ into itself. The denominator of $\rotz(w^+)$ is the least period of
an orbit, and is therefore $\le q_1$. Interchanging $a$ and $b$ gives the desired
estimate. 
\end{proof}

Note that Theorem~\ref{positive_rational} gives an {\em algorithm} to compute
$R(w,r,s)$ for positive $w$ and rational $r$ and $s$. For each configuration of $\Sigma$
in the circle, the rotation number can be read off from the map $w^+$ from
$\Sigma$ to itself. This rotation number only depends on the relative order of
the points in $\Sigma$; there are only finitely many possible configurations, so
by examining each of them in turn we can compute the maximum. This is pursued
more systematically in \S~\ref{UR_subsection}.

\begin{lemma}\label{nondecreasing}
Suppose $w$ is positive and fixed. Then $R(w,r,s)$ is monotone nondecreasing
in $r$ and in $s$.
\end{lemma}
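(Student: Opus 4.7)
The plan is to establish monotonicity in $r$; by symmetry (or by relabeling), the same argument handles $s$. Fix $r \le r'$ and let $\rho$ be any representation with $\rotz(\rho(a))=r$ and $\rotz(\rho(b))=s$. I will construct a new representation $\rho'$ with $\rotz(\rho'(a))=r'$, $\rotz(\rho'(b))=s$, and $\rotz(\rho'(w)) \ge \rotz(\rho(w))$. Taking the supremum over $\rho$ on both sides then yields $R(w,r',s) \ge R(w,r,s)$.

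To build $\rho'$, apply Proposition~\ref{elementary_properties_of_rot}(4) to $\varphi=\rho(a)$: the map $t \mapsto \rotz(R_t \circ \rho(a))$ is continuous, nondecreasing, and surjective from $[0,1]$ onto $[r,r+1]$, and by iterating (since $R_1$ is translation by $1$) it extends to a surjection $\R\to\R$. Hence there exists $t \ge 0$ with $\rotz(R_t \circ \rho(a))=r'$. Set $\rho'(a)=R_t\circ\rho(a)$ and $\rho'(b)=\rho(b)$. Since $t\ge 0$, we have $\rho'(a)(p)=\rho(a)(p)+t\ge \rho(a)(p)$ for every $p\in\R$.

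The core input now is the well-known monotonicity of $\rotz$ with respect to the pointwise partial order on $\homeo^+(S^1)^\sim$: if $\varphi_1\le\varphi_2$ pointwise, then $\rotz(\varphi_1)\le\rotz(\varphi_2)$, and moreover composition with any monotone map preserves this order from either side. Writing $w=x_1x_2\cdots x_n$ with each $x_i\in\{a,b\}$ (there are no inverses because $w$ is positive), an easy induction on $n$ using this order-preservation property shows that replacing each $\rho(a)$ factor by $\rho'(a)\ge \rho(a)$ (and leaving each $\rho(b)$ factor alone) can only increase the composition pointwise. Thus $\rho'(w)(p)\ge\rho(w)(p)$ for all $p$, and hence $\rotz(\rho'(w))\ge\rotz(\rho(w))$, as required.

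The only delicate point is verifying that \emph{positivity} of $w$ is essential in the last step: if $w$ contained an $a^{-1}$, then replacing $\rho(a)$ by the pointwise larger $\rho'(a)$ would yield a pointwise \emph{smaller} $\rho'(a)^{-1}$, and the induction would fail. Granting positivity, however, the argument reduces to the standard order-monotonicity of Poincar\'e rotation number, so there is no substantive obstacle.
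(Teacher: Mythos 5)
Your proof is correct and follows essentially the same route as the paper's: replace $a$ by $R_t\circ a$ with $t\ge 0$ chosen via Proposition~\ref{elementary_properties_of_rot}(4) so that the rotation number becomes $r'$, and use positivity of $w$ to conclude that the pointwise increase propagates through the composition and cannot decrease $\rotz(w)$. The paper's version is just terser; no substantive difference.
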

\begin{proof}
Let $\rho$ be a representation maximizing $\rotz(w)$ for fixed $r,s$. We may increase $r$
by replacing $a$ with the composition $R_t \circ a$ where $R_t$ is a 
rotation through angle $t$. By Proposition~\ref{elementary_properties_of_rot}
bullet~(4)  we can prescribe $\rotz(R_t\circ a)\ge r$.

Since $w$ is positive, replacing $a$ with $R_t\circ a$ cannot decrease $\rotz(w)$.
\end{proof}

\begin{lemma}\label{one_rational}
Suppose $w$ is positive. If $r=p/q$ is rational, then $R(w,r,s)$ is
rational with denominator $\le q$ for all $s$.
\end{lemma}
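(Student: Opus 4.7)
The plan is to reduce the claim to the already-established rational case (Theorem~\ref{positive_rational}) by approximating $s$ from below by a sequence of rationals, and then to exploit the \emph{a priori} denominator bound, which forces the values $R(w,r,s_n)$ to lie in a discrete subset of $\R$. First I would set aside the trivial case $w=a^k$, where $R(w,r,s)=kr$ has denominator dividing $q$ for all $s$. The statement as written is actually false for $w=b^k$ and irrational $s$ (then $R(w,r,s)=ks$ is irrational), so I read the hypothesis as implicitly excluding pure powers of $b$---exactly the case in which the Rationality Theorem's denominator bound fails.

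Now pick any sequence of rationals $s_n \le s$ with $s_n \to s$. By Lemma~\ref{nondecreasing}, $R(w,r,s_n) \le R(w,r,s)$, whence $\limsup_n R(w,r,s_n) \le R(w,r,s)$; by Lemma~\ref{lower_semicontinuous}, $R(w,r,s) \le \liminf_n R(w,r,s_n)$. Sandwiched, $R(w,r,s_n) \to R(w,r,s)$. Applying the Rationality Theorem to each pair $(r,s_n)$, the value $R(w,r,s_n)$ is rational with denominator at most $\min(q, \text{denom}(s_n)) \le q$. By Lemma~\ref{counting_inequality} the sequence also lies in a bounded interval (since $s_n$ is bounded). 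But the set of rationals with denominator at most $q$ intersected with any bounded interval is \emph{finite}---it is a finite union over $q' \in \{1,\ldots,q\}$ of finite sets of the form $(1/q')\Z \cap [\text{bounded interval}]$. A convergent sequence in a finite subset of $\R$ is eventually constant, so $R(w,r,s_n)$ is eventually a fixed rational with denominator $\le q$, and this rational equals the limit $R(w,r,s)$.

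I do not foresee a serious obstacle; the entire argument is an assembly of three ingredients already in hand---monotonicity, lower semi-continuity, and the denominator bound for rational data---combined with the trivial but crucial observation that a convergent real sequence taking values in a discrete set must be eventually constant. The only care required is in the pure-power case, which is a matter of correctly reading the hypothesis of the lemma.
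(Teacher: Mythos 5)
Your argument is correct and is essentially the paper's own proof: monotonicity in $s$ (Lemma~\ref{nondecreasing}) together with lower semicontinuity (Lemma~\ref{lower_semicontinuous}) identify $R(w,r,s)$ as the limit of $R(w,r,s_n)$ over rationals $s_n \nearrow s$, and these values lie in the discrete set of rationals with denominator $\le q$ by Theorem~\ref{positive_rational}, so the limit does too. Your aside about pure powers is a fair reading of the statement: for $w=b^k$ and irrational $s$ the claim fails (since $R(b^k,r,s)=ks$), so the lemma tacitly excludes that case, just as the denominator bound in the Rationality Theorem does.
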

\begin{proof}
As $s$ increases in some interval, $R(w,r,s)$ is
nondecreasing, by Lemma~\ref{nondecreasing}. On the other hand, it is rational
with denominator $\le q$ for all rational $s$ by Theorem~\ref{positive_rational}, 
and therefore by lower semicontinuity (i.e.\/ Lemma~\ref{lower_semicontinuous})
it is rational with denominator $\le q$ for all $s$.
\end{proof}

\subsection{$XY$ words and dynamics}\label{UR_subsection}

We now study the combinatorics of $\Sigma_x$ and $\Sigma_y$ (recall
the notation and setup from the proof of Theorem~\ref{positive_rational}). {\it A priori} 
it might be possible that $R(w,p_1/q_1,p_2/q_2)$ can only be achieved by some 
configuration where $\Sigma_x \cap \Sigma_y$
is nonempty. However, the following lemma shows we do not need to consider such
configurations.

\begin{lemma}\label{sigmas_disjoint}
Suppose $w$ is positive. $R(w,p_1/q_1,p_2/q_2)$ is achieved for some configuration
where $\Sigma_x$ and $\Sigma_y$ are disjoint.
\end{lemma}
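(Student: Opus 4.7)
The plan is to start from any representation that realizes $R(w,p_1/q_1,p_2/q_2)$ and, when its orbits $\Sigma_x,\Sigma_y$ intersect, to nudge $\Sigma_x$ slightly off of $\Sigma_y$ by conjugating $a$ with a tiny homeomorphism; the Rationality Theorem~\ref{positive_rational} will then be used to show that the rotation number of $w$ cannot actually drop under this small perturbation.

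First, by Lemma~\ref{maximum_achieved} choose a representation $\rho:F\to\homeo^+(S^1)^\sim$ with $\rotz(a)=p_1/q_1$, $\rotz(b)=p_2/q_2$ and $\rotz(w)=R(w,p_1/q_1,p_2/q_2)$. Let $\Sigma_x,\Sigma_y$ be the finite orbits of $a$ and $b$. If $\Sigma_x\cap\Sigma_y$ is empty we are done, so assume otherwise. Because $\Sigma_x\cap\Sigma_y$ is finite, for any $\epsilon>0$ (smaller than every gap between distinct points of $\Sigma_x\cup\Sigma_y$) there is a $\varphi\in\homeo^+(S^1)$ that is $\epsilon$-close to the identity in $C^0$, is supported in a small neighbourhood of $\Sigma_x\cap\Sigma_y$, moves each $z\in\Sigma_x\cap\Sigma_y$ slightly to the right, and fixes every other point of $\Sigma_y$. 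Define $\tilde\rho$ by $\tilde\rho(a)=\varphi\rho(a)\varphi^{-1}$ and $\tilde\rho(b)=\rho(b)$.

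By conjugacy invariance of rotation number, $\rotz(\tilde\rho(a))=p_1/q_1$ and $\rotz(\tilde\rho(b))=p_2/q_2$. The new orbits $\tilde\Sigma_x=\varphi(\Sigma_x)$ and $\tilde\Sigma_y=\Sigma_y$ are disjoint by construction. To conclude we must verify $\rotz(\tilde\rho(w))=R(w,p_1/q_1,p_2/q_2)$. By definition of $R$ we have $\rotz(\tilde\rho(w))\le R$, and since $\tilde\rho(w)$ is obtained from $\rho(w)$ by inserting copies of $\varphi^{\pm 1}$, it converges to $\rho(w)$ in $C^0$ as $\varphi\to\id$; so $\rotz(\tilde\rho(w))$ tends to $R$ by Proposition~\ref{elementary_properties_of_rot}(1). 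On the other hand, the Rationality Theorem~\ref{positive_rational} forces $\rotz(\tilde\rho(w))$ to be rational with denominator at most $\min(q_1,q_2)$; since two distinct such rationals differ by at least $1/\min(q_1,q_2)^2$, shrinking $\epsilon$ below this gap pins $\rotz(\tilde\rho(w))$ to $R$.

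The main obstacle is the last step: the perturbation is only guaranteed to give a value $\le R$, and the point is that the combination of $C^0$-continuity of $\rotz$ with the discreteness supplied by a bounded denominator rules out any strict drop. A more conceptual alternative would be a Denjoy-style blowup along the full $\langle a,b\rangle$-orbit of $\Sigma_x\cup\Sigma_y$ (a countable set), but this is heavier than the elementary perturbation argument sketched above.
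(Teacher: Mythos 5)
Your final step contains a genuine gap. The Rationality Theorem~\ref{positive_rational} asserts that the \emph{supremum} $R(w,r,s)$ is rational with denominator at most $\min(q_1,q_2)$; it says nothing about the value $\rotz(\tilde\rho(w))$ for an individual representation $\tilde\rho$ with $\rotz(\tilde\rho(a))=p_1/q_1$ and $\rotz(\tilde\rho(b))=p_2/q_2$. By Proposition~\ref{interchange} the set of such values is the whole interval $X(w,p_1/q_1,p_2/q_2)$, which is not discrete, so there is no $1/\min(q_1,q_2)^2$ gap to exploit: the perturbed representation may perfectly well have $\rotz(\tilde\rho(w))=R-\delta$ for some small $\delta>0$ depending on $\varphi$. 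Note also that conjugation gives no monotonicity to fall back on: if $\varphi\ge\id$ then $\varphi^{-1}\le\id$, and $\varphi\,\rho(a)\,\varphi^{-1}$ is not pointwise $\ge\rho(a)$, so the positivity of $w$ does not by itself prevent $\rotz(w)$ from dropping.

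The argument can be repaired, and the repair shows where the discreteness genuinely lives. Apply the $\alpha^+,\beta^+$ construction from the proof of Theorem~\ref{positive_rational} to the perturbed representation $\tilde\rho$, whose orbits are now disjoint. That proof gives $\rotz(\tilde w^+)\ge\rotz(\tilde\rho(w))$, shows $\rotz(\tilde w^+)\le R$ (it equals the value of an honest representation with the prescribed generator rotation numbers), and shows $\rotz(\tilde w^+)$ \emph{is} rational with denominator $\le\min(q_1,q_2)$, since $\tilde w^+$ preserves a finite set. Your $C^0$-continuity estimate then pins $\rotz(\tilde w^+)=R$ once $\varphi$ is close enough to $\id$, and the associated disjoint configuration achieves $R$ --- which is what the lemma (a statement about configurations, used to justify $R=\max_W R(w,p_1/q_1,p_2/q_2,W)$) actually requires. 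With that fix your route is genuinely different from the paper's, which never leaves the combinatorial level: there one perturbs a coincident point $x_i=y_j$ to sit just below $y_j$ and compares orbits of the monotone maps $\alpha^+,\beta^+$ step by step, checking the new orbit never lags the old, so no continuity or discreteness argument is needed at all.
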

\begin{proof}
Suppose we have a configuration in which $x_i=y_j$ for some $i,j$ (i.e.\/ in which
$\Sigma_x$ and $\Sigma_y$ are not disjoint). Perturb $x_i$ slightly so that
$y_{j-1} < x_i < y_j$. We apply one of $\alpha^+$ or $\beta^+$ in turn and see how our
new orbit compares to the old one. After applying some power of $\alpha^+$ we might end
up at $x_i$. But if we then apply $\beta^+$ then because $\beta^+(x_i)=\beta^+(y_j)$ 
the new orbit immediately catches up to the old.

Conversely, if we apply some power of $\beta^+$ and end up at $y_j$ and then apply
$\alpha^+$, because $\alpha^+(y_j)=\alpha^+(x_{i+1})$ the new orbit pulls ahead of 
the old. In either case, we definitely do not lag, and the rotation number is no 
smaller in the new configuration.
\end{proof}

Because of Lemma~\ref{sigmas_disjoint}, in the sequel we will assume that $\Sigma_x$ and
$\Sigma_y$ are disjoint, and therefore $|\Sigma|=q_1+q_2$. The configuration
of $\Sigma_x$ and $\Sigma_y$ in $S^1$ can be encoded (up to conjugacy) by a cyclic word $W$
in letters $X$ and $Y$ containing $q_1$ $X$'s and $q_2$ $Y$'s;
call such a word {\em admissible} for the pair $q_1,q_2$. 
The number of cyclic words admissible for $q_1,q_2$ is exponential in $\min(q_1,q_2)$, but
for fixed $q_1$ (say), is polynomial in $q_2$.


There is a dynamical system, generated by transformations $a$ and $b$, 
whose orbit space is the letters of $W$, as follows. The element $a$ acts by moving to the right
until we read off $p_1+1$ $X$'s, counting the $X$ we start on, if we start on an $X$.
The element $b$ acts by moving to the right until we read off $p_2+1$ $Y$'s, counting the $Y$
we start on, if we start on a $Y$. A maximal consecutive string $a^m$ ``hops'' over
$mp_1+1$ $X$'s, and a maximal consecutive string $b^n$ ``hops'' over $np_2+1$ $Y$'s;
we refer to these prosaically as {\em $a$-hops} and {\em $b$-hops} in the sequel.
Since $a$ and $b$ are monotone with respect to the
cyclic order, it makes sense to define the ($\Q$-valued) rotation number 
$R(w,p_1/q_1,p_2/q_2,W)$ for any admissible $q_1,q_2$ cyclic word $W$.

Lemma~\ref{sigmas_disjoint} implies
$R(w,p_1/q_1,p_2/q_2)=\max_W R(w,p_1/q_1,p_2/q_2,W)$. For convenience we also define
$r(w,p_1/q_1,p_2/q_2):=\min_W R(w,p_1/q_1,p_2/q_2,W)$ for reduced $p_1/q_1,p_2/q_2$.

\begin{example}
We compute $R(ab,2/3,1/2)$. Up to cyclic permutation, there are $2$ admissible
$3,2$ words, namely $XXXYY$ and $XXYXY$. Each application of $a$ skips over $3$ $X$'s,
and each application of $b$ skips over $2$ $Y$'s. We apply the letters of $w$ from right
to left to compute $R$; therefore we think of our group acting on $S^1$ on the left.

For the case $XXXYY$, there is an orbit of period $1$:
$$XX\bar{X}YY \xrightarrow{b} XXXY\bar{Y} \xrightarrow{a} XX\bar{X}YY$$
and the rotation number is $1$.

For the case $XXYXY$, there is an orbit of period $2$:
$$XXY\bar{X}Y \xrightarrow{b} XX\bar{Y}XY \xrightarrow{a} X\bar{X}YXY \xrightarrow{b} XXYX\bar{Y} \xrightarrow{a} XXY\bar{X}Y$$
and the rotation number is $3/2$. Hence $R(ab,2/3,1/2)=3/2$.
\end{example}

Complementing the Rationality Theorem is the following {\em Stability Theorem}:

\begin{theorem}[Stability Theorem]\label{stability_theorem}
Suppose $w$ is positive. Then $R$ is locally constant from the right at rational
points; i.e.\/ for every pair of rational numbers $r$ and $s$, there is an
$\epsilon(r,s)>0$ so that $R(w,\cdot,\cdot)$ is constant on $[r,r+\epsilon)\times[s,s+\epsilon)$.

Conversely, if $R(w,r,s)=p/q$ (where $p/q$ is reduced) and the biggest power of
consecutive $a$'s in $w$ (resp. $b$'s) is $a^m$ (resp. $b^n$), then $R(w,r+1/mq,s) \ge p/q + 1/q^2$
(resp. $R(w,r,s+1/nq) \ge p/q + 1/q^2$).
\end{theorem}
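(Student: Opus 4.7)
Monotonicity (Lemma~\ref{nondecreasing}) gives the lower bound $R(w,r',s') \geq p/q$ for $(r',s')$ in the upper-right quadrant at $(r,s)$. For the matching upper bound, Lemma~\ref{lower_semicontinuous} (whose proof actually produces $R(w,r,s) \geq \lim R(w,r_n,s_n)$ along every semi-conjugacy-convergent subsequence of maximizing representations) combined with monotonicity yields right continuity of $R$ at $(r,s)$. Since $r = p_1/q_1$ is rational, Lemma~\ref{one_rational} forces $R(w,r,\cdot)$ to take values in the discrete set $(1/q_1)\Z$, so right continuity gives $R(w,r,s') \equiv p/q$ on some interval $[s,s+\epsilon_1)$. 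Pick a rational $s_0 \in (s,s+\epsilon_1)$; the symmetric argument yields $\epsilon_2>0$ with $R(w,r',s_0) \equiv p/q$ on $[r,r+\epsilon_2)$. For any $(r',s')$ in the box $[r,r+\epsilon_2)\times[s,s_0)$, monotonicity sandwiches $R(w,r',s')$ between $p/q$ at $(r,s)$ and $p/q$ at $(r',s_0)$, giving local constancy with $\epsilon = \min(\epsilon_2, s_0-s)$.

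\textbf{Explicit gain.} Start with an extremal representation realizing $R(w,r,s) = p/q$, which by Theorem~\ref{positive_rational} and Lemma~\ref{sigmas_disjoint} I may take in combinatorial form: an admissible cyclic word $W$ on $\Sigma_x \sqcup \Sigma_y$ of sizes $q_1, q_2$, carrying a period-$q$ $w$-orbit of rotation number $p/q$. Refine $W$ by replacing each $X$ with a block of $mq$ adjacent $X$'s, yielding an admissible word $W^\sharp$ with $|\Sigma_x^\sharp| = mqq_1$ and $|\Sigma_y^\sharp| = q_2$. Define $a^\sharp$ on $W^\sharp$ with hop length $mqp_1 + q_1$, so $\rotz(a^\sharp) = (mqp_1 + q_1)/(mqq_1) = r + 1/(mq)$, leaving $b$ unchanged. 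Then $R(w, r+1/(mq), s)$ is bounded below by $\rotz(w^\sharp)$ in this refined model. The crux is verifying $\rotz(w^\sharp) \geq p/q + 1/q^2$: compared with the unperturbed refinement (hop length $mqp_1$, still giving $\rotz(w) = p/q$), each application of $a^\sharp$ advances by exactly $q_1$ extra $X$'s, and within one $a^m$-block this contributes $mq_1 \geq mq$ extra $X$-positions (using $q_1 \geq q$ from Theorem~\ref{positive_rational}), which equals one full refined block. Over $q$ iterations of $w^\sharp$, the orbit accumulates at least $q$ extra refined blocks, producing (I claim) a period-$q^2$ $w^\sharp$-orbit advancing by $pq+1$, which realizes rotation number $(pq+1)/q^2 = p/q + 1/q^2$ exactly. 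The parallel statement for $b$ follows from Lemma~\ref{left_right_symmetry}.

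The main obstacle is rigorously converting the pointwise accumulation of ``extra $X$-advance'' into a lower bound on $\rotz(w^\sharp)$: rotation number is not linear under composition, and intervening $b$-hops could a priori absorb extra push. I expect this to resolve by explicitly exhibiting the period-$q^2$ $w^\sharp$-orbit by unwinding the original $w$-orbit through the $mq$-fold refinement---the $a^m$-block's extra advance cycles each original orbit point through the $q$ new ``layers'' introduced by refinement, producing a single new cycle of length $q^2$ with total advance $pq+1$.
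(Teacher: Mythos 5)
Your treatment of local constancy is essentially the paper's argument: combine monotonicity (Lemma~\ref{nondecreasing}) with upper semicontinuity (the content of Lemma~\ref{lower_semicontinuous}, despite its name) to get right-continuity along each axis, use Lemma~\ref{one_rational} to see that for fixed rational $r=p_1/q_1$ the function $R(w,r,\cdot)$ takes values in a discrete set (note it is the set of rationals with denominator $\le q_1$, not $(1/q_1)\Z$, but this is still discrete and bounded, so the conclusion stands), deduce right-local-constancy in each variable, and finish with a monotonicity sandwich. This is a correct and complete rendering of the first half.

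The second half has a genuine gap, and you have correctly located it yourself. You carry out the paper's refinement ($X \mapsto X^{mq}$, yielding $\rotz(a^\sharp)=r+1/(mq)$), and you correctly compute that the perturbed $a^m$-hop advances $mq_1 \ge mq$ extra refined $X$'s, i.e.\ at least one full ``old'' block. But the jump from ``each $a^m$-hop gains $\geq 1$ block'' to ``over $q$ iterations the orbit accumulates $\geq q$ extra blocks, producing a period-$q^2$ orbit of advance $pq+1$'' is exactly where the argument breaks: a $b$-hop applied to two points lying in the same gap between consecutive $Y$'s sends them to the same place, so the block of advance gained at one $a^m$-hop can be completely erased before the next one, leaving you only one block ahead of the unperturbed orbit after $q$ steps --- which is far short of $1/q^2$. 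Your closing paragraph candidly flags this, but the sketch you offer (``cycles each original orbit point through the $q$ new layers'') does not address the absorption problem and does not yield the claimed period-$q^2$ orbit. The missing ingredient is the paper's pigeonhole step: among the $q$ landing points $X_1,\dots,X_q$ of the $a^m$-hop along the original period-$q$ orbit (listed in cyclic order around $W$), some consecutive pair $(X_i,X_{i+1}]$ is separated by at most $\lfloor q_1/q\rfloor$ old $X$'s. It is this choice of a \emph{small} gap, not a global accumulation count, that lets one compare the perturbed orbit against the unperturbed one and extract the definite gain of $1/q^2$. Without this (or some replacement for it), the ``explicit gain'' half of the theorem is not proved.
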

\begin{proof}
By Lemma~\ref{lower_semicontinuous} and Lemma~\ref{nondecreasing}, it follows
that $R(w,\cdot,s)$ is continuous from the right. If $s$ is rational, it takes
only finitely many values, by Theorem~\ref{positive_rational}. Hence it is
locally constant from the right.
Hence for any $r$ there is rational $r'>r$ with
$R(w,r',s) = R(w,r,s)$. Similarly, there is a rational $s'>s$ with $R(w,r',s')= R(w,r',s)$.
Monotonicity (i.e.\/ Lemma~\ref{nondecreasing}) proves the existence of an $\epsilon$
as in the statement of the theorem.

Conversely, let $W$ be admissible with
$R(w,r,s,W)=p/q$, where $r=u/v$. Each time we read $a^m$ we hop over $mu+1$ $X$'s.
In the course of a periodic orbit, 
the end of this $a$-hop lands on $q$ distinct $X$'s in $W$, which
we label $X_1,X_2,\cdots, X_q$. For some $i$ there are at most
$\lfloor v/q \rfloor$ $X$'s in the ``interval'' $(X_i,X_{i+1}]$, by the pigeonhole principle. 
Let $W'$ be obtained from $W$ by replacing each $X$ by $X^{mq}$. Then
by comparing orbits, $R(w,u/v + 1/mq,s) \ge R(w,(umq + v)/vmq,s,W') \ge p/q+1/q^2$, as claimed.
\end{proof}

It follows that for all positive $w$, there is an open, dense set in the
$r$--$s$ plane where $R(w,r,s)$ is locally constant and takes values in $\Q$. This
is a new manifestation of the familiar phenomenon of phase locking; it would be
very interesting to investigate, for a fixed $w$, 
how the maximal $\epsilon$ as in Theorem~\ref{stability_theorem} depends on $r$ and $s$.
For fixed $w$, a natural guess (in view of the inequality in the second half of
Theorem~\ref{stability_theorem}) is $\epsilon \sim q^{-1}$, where
$p/q$ is the locally constant value of $R(w,\cdot,\cdot)$. There is some experimental
evidence for this, but it seems hard to prove rigorously.

\begin{example}
The inequality in Theorem~\ref{stability_theorem} is sharp for every
$q$ for $w=a^mb^n$, as follows from Theorem~\ref{ab_theorem}, to be proved in
\S~\ref{ab_case}.
\end{example}

\subsection{The case $w=ab$}\label{ab_case}

In this section we analyze a particular important special case in detail, namely the 
case $w=ab$. We derive a concrete formula for $R(ab,p_1/q_1,p_2/q_2)$, and thereby 
give a new (and much simpler) proof of the main conjecture of 
Jankins--Neumann \cite{Jankins_Neumann}, first proved by Naimi \cite{Naimi}.

\begin{theorem}[$ab$ Theorem]\label{ab_theorem}
For $0\le r,s < 1$ there is an equality
$$R(ab,r,s) = \sup_{p_1/q\le r, \; p_2/q \le s} (p_1 + p_2 + 1)/q$$
\end{theorem}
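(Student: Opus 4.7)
The plan is to prove the equality by establishing the two inequalities $R(ab,r,s)\ge\sup$ and $R(ab,r,s)\le\sup$ separately.

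For the lower bound, fix integers $(p,p',q)$ with $p/q\le r$ and $p'/q\le s$; I will construct a representation with $\rotz(a)=r$, $\rotz(b)=s$, and $\rotz(ab)\ge(p+p'+1)/q$. The core construction places $2q$ equally spaced points on $S^1$ in the alternating pattern $XYXY\cdots XY$, and declares $a$ to cyclically shift the $X$-orbit by $p$ (out of $q$) and $b$ to shift the $Y$-orbit by $p'$, each extended monotonically to $S^1$. A direct one-iteration calculation (locate the $(p'+1)$-th $Y$ after a landing $X$, then the $(p+1)$-th $X$ after that) yields $\rotz(ab)=(p+p'+1)/q$ in this configuration. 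To raise $\rotz(a)$ and $\rotz(b)$ from $p/q,p'/q$ up to $r,s$, compose with small positive rotations using Proposition~\ref{elementary_properties_of_rot}(4); by Lemma~\ref{nondecreasing}, $\rotz(ab)$ cannot decrease.

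For the upper bound, by Lemma~\ref{lower_semicontinuous} (lower semicontinuity of $R$) combined with the evident monotonicity of the right hand side in $r,s$, it suffices to treat rational $r=p_1/q_1$, $s=p_2/q_2$. The method of perturbation from Theorem~\ref{positive_rational} together with Lemma~\ref{sigmas_disjoint} then reduces the question to computing $\max_W R(ab,p_1/q_1,p_2/q_2,W)$ over admissible cyclic words $W$ with $q_1$ $X$'s and $q_2$ $Y$'s. Fix such $W$ and write $P/Q$ (in lowest terms) for the resulting rotation of $ab$. Counting $X$- and $Y$-visits over one period of the $ab$-orbit (of length $Q$, winding $P$ times) yields
\begin{equation*}
Pq_1=Q(p_1+1)+\sum_i\mu_i,\qquad Pq_2=Q(p_2+1)+\sum_i\nu_i,
\end{equation*}
where $\mu_i,\nu_i\ge 0$ are the numbers of $X$'s interior to the $i$-th $b$-hop and $Y$'s interior to the $i$-th $a$-hop. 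It remains to prove $P\le\lfloor p_1Q/q_1\rfloor+\lfloor p_2Q/q_2\rfloor+1$, for then $P/Q\le(\lfloor p_1Q/q_1\rfloor+\lfloor p_2Q/q_2\rfloor+1)/Q$, a value realised in the supremum by the choice $q=Q$, $p=\lfloor p_1Q/q_1\rfloor$, $p'=\lfloor p_2Q/q_2\rfloor$.

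The main obstacle is precisely this last integer inequality: the counting identities above directly yield only the \emph{lower} bounds $P\ge Q(p_j+1)/q_j$. My intended approach is a pigeonhole/extremal analysis using the geometric constraints in $W$: the $Q$ landing $X$'s (in cyclic order on $W$) and the $Q$ $b$-endpoint $Y$'s (each the $(p_2+1)$-th $Y$ after its preceding landing $X$) cannot be too widely spread inside $W$, since only $q_1$ $X$'s and $q_2$ $Y$'s are available. This limits how large the $\mu_i$ and $\nu_i$ can be, in a way that forces the two floor terms; the "$+1$" reflects the single arc needed to close the orbit back on itself after one full period. Making this rigorous, in particular coordinating the $X$- and $Y$-constraints so that both floors appear cleanly as integers rather than real-valued estimates, is the key technical step.
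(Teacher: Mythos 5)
Your lower bound and the reduction to the combinatorial problem over admissible words $W$ match the paper, and the target inequality you isolate, $P\le\lfloor p_1Q/q_1\rfloor+\lfloor p_2Q/q_2\rfloor+1$, is exactly equivalent to what the paper proves (with $P=n$, $Q=m$). But the proof of that inequality is precisely where your proposal stops: you state the counting identities, correctly observe that they only give lower bounds on $P$, and then defer the ``key technical step'' to an unspecified pigeonhole argument. That step is the entire content of the theorem, so as written there is a genuine gap. The difficulty is not just bookkeeping: the number of $b$-landing points falling between consecutive $a$-landing points can a priori vary with position around the orbit, and without controlling that variation you cannot coordinate the two floor terms --- summing the counting identities only constrains averages, which is why your estimates come out ``real-valued'' rather than as the two integer floors.

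The paper resolves this with a normalization of $W$ that your sketch has no analogue of. Writing $W=T_1\cdots T_m$ (blocks ending at the $a$-landing $X$'s $T_i^+$) and $W=U_1\cdots U_m$ (blocks ending at the $b$-landing $Y$'s $U_i^+$, with $b(T_i^+)=U_i^+$ and $a(U_i^+)=T_{n+i}^+$), one shows that $Y$'s can be shifted leftward out of the $T$-blocks without decreasing the rotation number, until every $U_j$ has the form $Y^{z_j}X^{x_j}Y^{y_j}$; a short argument then shows every $x_j>0$, since otherwise two distinct $U^+$ points would have the same image under $a$. Because each $U_j$ now contains an $X$, the $T^+$ and $U^+$ points interleave with a \emph{uniform} offset $l$: $U_{i+l}^+<T_{n+i}^+<U_{i+l+1}^+$ for all $i$. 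From this, $p_1\ge\sum_{j=0}^{l-1}x_{i+j}$ for every $i$, and summing over $i$ gives $p_1/q_1\ge l/m$; similarly $p_2/q_2\ge(n-l-1)/m$, and since $R(ab,l/m,(n-l-1)/m,(XY)^m)=n/m$ the supremum formula follows. If you want to complete your proof, you need either this normalization (or some substitute argument establishing the uniform offset $l$); I do not see how to extract it from the counting identities alone. As a minor point, for the reduction to rational $r,s$ in the upper bound you should cite monotonicity of $R$ (Lemma~\ref{nondecreasing}) rather than lower semicontinuity, which bounds $R$ at a limit point from \emph{below}.
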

\begin{proof}
By monotonicity, it suffices to prove the conjecture for rational $r,s$. So let $0 \le p_1/q_1<1$
and $0\le p_2/q_2<1$ be arbitrary. Let $W$ be a cyclic $XY$ word admissible for $q_1,q_2$ with 
$R(ab,p_1/q_1,p_2/q_2)=R(ab,p_1/q_1,p_2/q_2,W) =n/m$.
We assume $n/m$ is reduced, so that $ab$ has an orbit of period $m$ on $W$. We decompose $W$
into $m$ subwords $W=T_1T_2\cdots T_m$. For each $i$, let $T_i^+$ denote the rightmost
letter of $T_i$. Then $ab(T_i^+) = T_{n+i}^+$, indices taken mod $m$.

There is another (cyclic) decomposition of $W$ into $m$ subwords $U_1U_2\cdots U_m$
such that $b(T_i^+)=U_i^+$ and $a(U_i^+) = T_{n+i}^+$, where we denote the rightmost letter
of $U_i$ by $U_i^+$. Each $U_i^+$ is therefore a $Y$ and each $T_i^+$ is an $X$, so the
set of endpoints of these words are {\em disjoint}. Let $V_1V_2\cdots V_{2m}$ be the
common refinement.

We now show that we can adjust the letters in $W$ without affecting the dynamics. First of
all, any reordering of the letters in each $V_i$ which leaves the last letter intact will not
affect the dynamics, so without loss of generality we assume that $V_i$ is of the form
$X^xY^y$ if $V_i^+ = U_j^+$ for some $j$ (possibly with $x=0$), 
and $V_i = Y^yX^x$ if $V_i^+ = T_j^+$ for some $j$ (possibly with $y=0$).

Next, suppose some $T_i$ is entirely contained in some $U_j$, so that $T_i=V_k$ for some $k$,
and $T_i=Y^yX^x$. Notice $V_{k-1}$ is also entirely contained in $U_j$, and 
$V_{k-1}^+ = T_{i-1}^+$.
We claim that moving the (possibly empty) string $Y^y$ left to the start of $V_{k-1}$ will
not decrease the rotation number of $W$. First, we still have $a(U_l^+)=T_{n+l}^+$ for all $l$ 
(since $a$ ignores $Y$'s). Second, we have $b(T_l^+)\ge U_l^+$ for all $l$, since the
number of $Y$'s between $T_l^+$ and $U_l^+$ is either the same or is decreased by this
transformation. This proves the claim.

It follows that whenever we have a string of
consecutive $T_i$'s entirely contained in some $U_j$, we can move the $Y$'s out of the $T_i$'s
and to the left side of $U_j$.
After finitely many transformations of this kind, we can assume that every $U_j$ is of the form
$Y^{z_j}X^{x_j}Y^{y_j}$ (where {\it a priori} possibly $x_j$ and/or $z_j$ are $0$).

\medskip

Now, we claim that in fact every $x_j>0$. For, otherwise, there is some $U_j$ which consists entirely
of $Y$'s. But then $T_{n+j-1}^+=a(U_{j-1}^+)=a(U_j^+)=T_{n+j}^+$ which is absurd. The claim follows.
In particular, we can conclude that there is some $l$ so that 
$U_{i+l}^+ < T_{n+i}^+ < U_{i+l+1}^+$ for all $i$.

But from this the theorem follows easily. Since $a(U_i^+)=T_{n+i}^+$, from the definition of
the $a$ transformation we get an inequality $p_1+1 \ge x_i+x_{i+1} + \cdots + x_{i+l-1}+1$ and
therefore $p_1 \ge \sum_{j=0}^{l-1} x_{i+j}$.
Since this is true for every $i$, and since $\sum_{j=1}^m x_j = q_1$ we conclude $p_1/q_1 \ge l/m$.
Similarly, $p_2/q_2 \ge (n-l-1)/m$. But $R(ab,l/m,(n-l-1)/m,(XY)^m)=n/m$ and the theorem is proved.
\end{proof}

The proof of Theorem~\ref{ab_theorem} is expressed in terms of the purely combinatorial
question of maximizing $R(ab,p_1/q_1,p_2/q_2,W)$ over admissible $q_1,q_2$ words $W$.
It turns out that for the case of $ab$ there is a simple formula relating $R$ to $r$ which solves
the combinatorial question of minimizing $R(ab,p_1/q_1,p_2/q_2,W)$ over admissible 
$q_1,q_2$ words $W$. This is the following duality formula:

\begin{proposition}[Duality formula]\label{duality_formula}
Suppose $p_1/q_1$ and $p_2/q_2$ are reduced fractions. There is a formula
$$r(ab,p_1/q_1,p_2/q_2) + R(ab,(q_1-p_1-1)/q_1,(q_2-p_2-1)/q_2) = 2$$
\end{proposition}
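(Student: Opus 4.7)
The plan is to prove a per-word duality identity, from which the theorem follows by interchanging $\min_W$ and $\max_W$. For a fixed admissible cyclic word $W$ for $(q_1,q_2)$, I will consider two monotone actions on $W$: the primal pair $(a,b)$ with rotations $p_1/q_1,p_2/q_2$ (so $a$ hops $p_1+1$ $X$'s and $b$ hops $p_2+1$ $Y$'s), and the dual pair $(\tilde a,\tilde b)$ with rotations $(q_1-p_1-1)/q_1,(q_2-p_2-1)/q_2$ (so $\tilde a$ hops $q_1-p_1$ $X$'s and $\tilde b$ hops $q_2-p_2$ $Y$'s). A direct check from the $+$-convention formulas shows that $a$ commutes with $\tilde a$ and $b$ with $\tilde b$ as monotone self-maps of $W$; in fact $a\tilde a$ (resp.\ $b\tilde b$) is the ``round-down to the previous $X$ (resp.\ $Y$)'' map.

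The key claim to prove is
\[ R(ab,p_1/q_1,p_2/q_2,W) + R(ab,(q_1-p_1-1)/q_1,(q_2-p_2-1)/q_2,W) = 2 \]
for every such $W$. I would establish this by analyzing the composition $(ab)(\tilde a\tilde b)$ on $W$. Its four constituent hops together pass $(p_1+1)+(q_1-p_1)=q_1+1$ $X$'s and $q_2+1$ $Y$'s per application, i.e.\ one full revolution of $W$ plus one extra letter of each kind; careful bookkeeping of the alternating intermediate landings $W\to\Sigma_y\to\Sigma_x\to\Sigma_y\to\Sigma_x$ shows that $(ab)(\tilde a\tilde b)$ has rotation number exactly $2$ on the circle of length $q_1+q_2$. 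It remains to upgrade this to the additivity of the rotation numbers of $ab$ and $\tilde a\tilde b$ themselves, i.e.\ to force the rotation quasimorphism to have zero defect on this pair. I would restrict to the finite invariant set $\Sigma_x$, through which both $ab$ and $\tilde a\tilde b$ factor as monotone orientation-preserving maps, and exploit the separate commutativities of $(a,\tilde a)$ and $(b,\tilde b)$ to compute both rotation numbers simultaneously from the common combinatorial data of $W$.

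Given the per-word identity, the duality formula follows from
\[ r(ab,p_1/q_1,p_2/q_2) = \min_W R(ab,p_1/q_1,p_2/q_2,W) = 2 - \max_W R(ab,(q_1-p_1-1)/q_1,(q_2-p_2-1)/q_2,W), \]
together with the observation that the final maximum equals $R(ab,(q_1-p_1-1)/q_1,(q_2-p_2-1)/q_2)$: by the Rationality Theorem the denominator of this $R$ is bounded by the reduced denominators of its arguments, so the maximum over $(q_1,q_2)$-admissible words already realizes the global supremum even when the complementary fractions fail to be reduced. I expect the main obstacle to be the exact additivity of the rotation numbers in the key claim: in general $\rot$ is only a quasimorphism of defect at most $1$, and ruling out a defect of $\pm 1$ will require careful use of the specific combinatorial structure above---most plausibly, a direct computation showing that the $n$-step displacements of the orbits of $ab$ and $\tilde a\tilde b$ from a common starting $X$ sum to exactly $2n(q_1+q_2)+O(1)$ as $n\to\infty$.
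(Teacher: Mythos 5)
Your overall plan --- establish a per-word identity and then interchange $\min_W$ with $\max_W$ --- is a genuinely different route from the paper's, and the per-word identity
\[
R(ab,p_1/q_1,p_2/q_2,W) + R\bigl(ab,(q_1-p_1-1)/q_1,(q_2-p_2-1)/q_2,W\bigr) = 2
\]
does appear to be true (I checked it on several small $W$). But your proposed proof of that identity has a real gap, and you essentially concede it yourself: showing $\rotz\bigl((ab)(\tilde a\tilde b)\bigr)=2$ does not yield $\rotz(ab)+\rotz(\tilde a\tilde b)=2$, because $\rotz$ is only a quasimorphism of defect $\le 1$. The two commutativities you cite are both correct --- one checks directly that $a\tilde a=\tilde a a$ is the ``predecessor $X$'' map on $(x_i,x_{i+1}]\mapsto x_i$, and similarly for $b\tilde b$ --- but these do not propagate to commutativity of $ab$ with $\tilde a\tilde b$ (in fact on the word $XXYY$ with $p_1/q_1=p_2/q_2=1/2$, one computes $\tilde a\tilde b(ab(X_1))=X_1$ while $ab(\tilde a\tilde b(X_1))=X_2$, so they genuinely fail to commute). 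You have not given, and I do not see, a reason why the defect of $\rotz$ vanishes on the specific pair $(ab,\tilde a\tilde b)$; that is exactly the step you flag as ``the main obstacle,'' and as written it is missing, not merely deferred.

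The paper's argument avoids this issue entirely and is worth comparing. It applies the periodicity Lemma~\ref{lattice_transform} to write $R(ab,p_1/q_1,p_2/q_2)-2 = R\bigl(ab,(p_1-q_1)/q_1,(p_2-q_2)/q_2\bigr)$, observes that with the $+$-convention the latter is computed by hops that move \emph{left} by $q_1-p_1$ $X$'s and $q_2-p_2$ $Y$'s, and then reverses the orientation of the circle. Reversal flips the sign of $\rotz$ (turning the maximum into $-\min=-r$) and simultaneously converts the left $+$-convention hop into a right hop with the conventions arranged so that the effective rotation parameter becomes $(q_i-p_i-1)/q_i$. No additivity of $\rotz$ is needed: one is simply relabelling a single dynamical system, not composing two. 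If you want to salvage your per-word identity, the cleanest path is to run this same periodicity--reversal argument \emph{per fixed $W$} (keeping track of $W$ versus $\overline W$, which is harmless since $W\mapsto\overline W$ is a bijection on admissible words), rather than attempting to compare $\rotz(ab)+\rotz(\tilde a\tilde b)$ with $\rotz\bigl((ab)(\tilde a\tilde b)\bigr)$.

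One more small point: your closing remark about the reduced/unreduced denominators of $(q_i-p_i-1)/q_i$ is a sensible thing to worry about, but it is a red herring here --- the equality $R(w,p_1/q_1,p_2/q_2)=\max_W R(w,p_1/q_1,p_2/q_2,W)$ from Lemma~\ref{sigmas_disjoint} does not require the fractions to be reduced (one may always take the periodic orbits of $a$ and $b$ to have exactly $q_1$ and $q_2$ points), so the identification of the final maximum with the global $R$ is automatic.
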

\begin{proof}
Without loss of generality we can assume $0\le p_i/q_i < 1$. We have
$R(ab,(p_1-q_1)/q_1,(p_2-q_2)/q_2)=R(ab,p_1/q_1,p_2/q_2)-2$.
On the other hand, $R(ab,(p_1-q_1)/q_1,(p_2-q_2)/q_2)$ may be calculated as the maximum,
over all $W$ admissible for $q_1,q_2$ of $n/m$, where we alternate between moving {\em left}
$q_1-p_1$ $X$'s and $q_2-p_2$ $Y$'s. Changing the orientation of the circle, this evidently
computes $-r(ab,(q_1-p_1-1)/q_1,(q_2-p_2-1)/q_2)$.
\end{proof}

Figure~\ref{JN_ziggurat} shows the graph of $R(ab,\cdot,\cdot)$ over $[0,1]\times[0,1]$.
Discontinuities of $R$ are represented by vertical walls. Because of the monotonicity of
$R$, a camera situated at $(-1,-1,3)$ can see the entire graph without occlusion. Because
of its stepwise nature, we refer to such graphs as {\em ziggurats}.

\begin{figure}[htpb]
\labellist
\small\hair 2pt
\endlabellist
\centering
\includegraphics[scale=0.2]{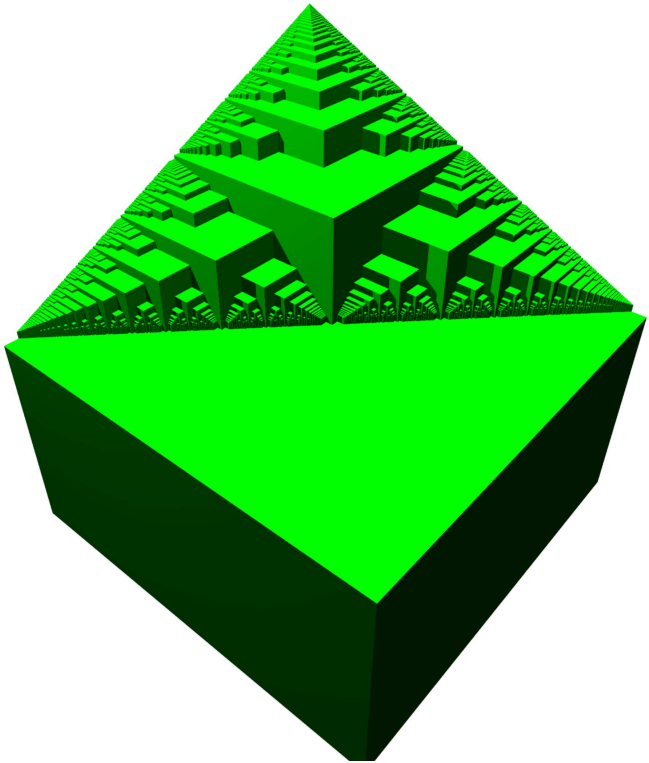}
\caption{The Jankins--Neumann ziggurat (i.e.\/ the graph of $R(ab,\cdot,\cdot)$).}\label{JN_ziggurat}
\end{figure}

\subsection{Stairsteps}

We have described an algorithm to compute $R$. However, this algorithm as stated is
very inefficient because of the very large number of admissible $q_1,q_2$ words for
large $q_i$. Given a positive word $w$ and a fixed $p_1/q_1$, the function
$t \to R(w,p_1/q_1,t)$ only takes on finitely many values in the interval $t\in [0,1)$
(for, it is {\it a priori} bounded, and rational with denominator $\le q_1$).
Moreover, we already know that this function is nondecreasing as a function of $t$,
and therefore it is completely specified if we know the finitely many values
that are achieved, and the infimal $t_i$ at which each value is achieved.

The following theorem says that these finitely many values are {\em rational}, that
each value is achieved at some {\em minimal} $t_i$, and gives an algorithm to compute
them.

\begin{theorem}[Stairstep Theorem]\label{stairstep_theorem}
Let $w$ be positive, and suppose we are given rational numbers $p/q$ and $c/d$ so that
$c/d$ is a value of $R(w,p/q,\cdot)$ (so necessarily $d \le q$). Then
$\inf \lbrace t : R(w,p/q,t)=c/d \rbrace$ is rational, 
and there is an algorithm to compute it. Moreover,
if $u/v$ is this infimal value, $R(w,p/q,u/v)=c/d$.
\end{theorem}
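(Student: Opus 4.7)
The plan is to show that at $t^* := \inf\{t : R(w,p/q,t) = c/d\}$ the infimum is realized by a rational number, via the method of perturbation applied to the $b$-generator: I would replace $b$ in a maximizing representation by a step-function surrogate determined by the finite trajectory of the periodic $w$-orbit, then show this surrogate has rational rotation number and does not decrease $\rotz(w)$. First I dispose of the ``moreover'' clause and the fact that the infimum is achieved. The function $R(w,p/q,\cdot)$ is nondecreasing by Lemma~\ref{nondecreasing} and takes only finitely many values by Lemma~\ref{one_rational}, and combining Lemma~\ref{lower_semicontinuous} with monotonicity (as in the proof of the Stability Theorem) gives right-continuity; hence $\{t : R(w,p/q,t)\ge c/d\}$ is a closed half-line $[t^*,\infty)$, and since $c/d$ is the smallest value of $R$ which is $\ge c/d$, we have $R(w,p/q,t^*)=c/d$. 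The theorem thus reduces to showing $t^*$ is rational and algorithmically computable.

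Fix a maximizer $\rho^*$ with $\rotz(\rho^*(w))=c/d$, so $w$ has a periodic orbit $\Sigma_w=\{z_0,\ldots,z_{d-1}\}$ rotated by $c$ positions. Writing $w=l_k\cdots l_1$ with $l_j\in\{a,b\}$ and tracking the trajectories of the $z_i$ letter by letter, let $T_b\subset S^1$ be the (at most $d\cdot h_b(w)$ many) points at which $b$ is actually applied somewhere along these trajectories. Define $b' : S^1\to S^1$ to be the minimal monotone map agreeing with $\rho^*(b)$ at every point of $T_b$: on each open arc strictly between a point $p_i\in T_b$ and the next clockwise point of $T_b$ declare $b'$ to be the constant $\rho^*(b)(p_i)$, and set $b'(p_i)=\rho^*(b)(p_i)$. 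This is the minimum-side analogue of the $\beta^+$ step function from the proof of the Rationality Theorem, with the finite constraint set $T_b$ playing the role of a periodic orbit of $b$. Because iterating $b'$ on its finite image is eventually periodic, $\rotz(b')$ is rational with denominator at most $|T_b|\le d\cdot h_b(w)$; and because $b'(x)\le \rho^*(b)(x)$ pointwise, $\rotz(b')\le t^*$.

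The crucial observation is that since $b'$ coincides with $\rho^*(b)$ on $T_b$, the $w$-trajectory of each $z_i$ under the substituted pair $(\rho^*(a),b')$ is letter by letter the same as under $\rho^*$; so $\Sigma_w$ remains a periodic orbit rotated by $c$ positions, and the corresponding monotone composition has rotation number exactly $c/d$. I would then approximate $b'$ in $C^0$ by orientation-preserving homeomorphisms $b_\epsilon$ whose rotation number is exactly the rational $\rotz(b')$, by first smoothing out the jumps of $b'$ and then applying Proposition~\ref{elementary_properties_of_rot}(4) to tune the rotation number by a small compensating rotation. Then $(\rho^*(a),b_\epsilon)$ is a genuine representation with $\rotz(b_\epsilon)=\rotz(b')$, and $C^0$-continuity of rotation number forces $\rotz$ of $w$ evaluated on this representation to converge to $c/d$ as $\epsilon\to 0$. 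Hence $R(w,p/q,\rotz(b'))\ge c/d$, so $\rotz(b')\in[t^*,\infty)$; combined with $\rotz(b')\le t^*$, this yields $\rotz(b')=t^*$, showing that $t^*$ is rational with denominator at most $d\cdot h_b(w)$.

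The denominator bound immediately yields an algorithm: enumerate all $u/v$ with $d\le v\le d\cdot h_b(w)$ in the appropriate range, compute each $R(w,p/q,u/v)$ via the admissible $XY$-word combinatorics of \S~\ref{UR_subsection}, and return the smallest $u/v$ attaining value $c/d$. The step I expect to require the most care is the approximation of the step function $b'$ by a homeomorphism $b_\epsilon$ with exactly the rotation number $\rotz(b')$ while remaining $C^0$-close to $b'$; the essential inputs here are the $C^0$-continuity of rotation number, the tunability of rotation number via composition with a rigid rotation (Proposition~\ref{elementary_properties_of_rot}(4)), and the fact that rotation number extends to monotone circle maps, which is what makes the Rationality Theorem work and is also what lets us assert that the step-function composition has rotation number precisely $c/d$.
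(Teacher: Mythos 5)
Your overall strategy is genuinely different from the paper's: the paper reduces the computation of $t^*$ to an explicit homogeneous linear program over the block lengths of an admissible $XY$ word, whereas you work at an extremal representation $\rho^*$ for $t^*$ and identify $t^*$ with the rotation number of the minimal monotone step function $b'$ interpolating $\rho^*(b)$ on the finite set $T_b$ swept out by the periodic $w$-orbit. Your preliminary reductions (right-continuity, $R(w,p/q,t^*)=c/d$, the inequality $\rotz(b')\le t^*$, the rationality of $\rotz(b')$ with denominator at most $|T_b|$) are correct, and the explicit denominator bound $d\cdot h_b(w)$ is a byproduct the paper does not state. (Small slip in the algorithm: the denominator $v$ of $t^*$ need not satisfy $v\ge d$, so you should enumerate all $v\le d\cdot h_b(w)$.)

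The genuine gap is in the step where you deduce $R(w,p/q,\rotz(b'))\ge c/d$, which is what forces $\rotz(b')\ge t^*$. Your mechanism --- smooth the jumps of $b'$, compose with a small rigid rotation to tune the rotation number to exactly $\rotz(b')$, then invoke $C^0$-continuity --- fails on two counts. First, once you post-compose with a rotation, $b_\epsilon$ no longer agrees with $\rho^*(b)$ on $T_b$, so the trajectory of $\Sigma_w$ is not preserved and you lose the only reason $\rotz(w)$ should stay near $c/d$; worse, $b_\epsilon$ may drop below $b'$ somewhere, and then positivity of $w$ gives no one-sided control. Second, $C^0$-continuity of $\rotz$ is a statement about homeomorphisms; the limit $w(\rho^*(a),b')$ is a monotone non-injective map, and the convergence of the rotation number of the perturbed word to $c/d$ does not follow from $C^0$-closeness alone --- this is precisely the delicate point the paper handles by hand in the Rationality Theorem by tracking a specific orbit. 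The repair is to keep everything one-sided: choose homeomorphisms $\beta_n$ passing through every constraint point $(p_i,\rho^*(b)(p_i))$ and decreasing pointwise to $b'$ (rising slowly on each step and steeply only in a shrinking neighborhood of its right endpoint). Then $\beta_n\ge b'$ and $\beta_n$ agrees with $\rho^*(b)$ on $T_b$, so $\Sigma_w$ is literally a periodic orbit of $w(\rho^*(a),\beta_n)$ and $\rotz(w)=c/d$ exactly, giving $\rotz(\beta_n)\ge t^*$ for every $n$; and the subadditive estimate $\left|(f^k(x)-x)/k-\rotz(f)\right|\le 1/k$, together with $\beta_n^k(x)\downarrow (b')^k(x)$ (which uses the right-continuity of $b'$), gives $\rotz(\beta_n)\to\rotz(b')$, hence $\rotz(b')\ge t^*$. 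With that substitution your argument closes.
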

\begin{proof}
Given $w$, $p/q$ and $c/d$ we compute the infimal $t$ such that $R(w,p/q,t)\ge c/d$;
this will give us the same answer.
It suffices to compute the infimum over rational $t=u/v$; we give an algorithm to do this
whose output is evidently rational.
Without knowing $u$ and $v$ in advance, we let $W$ be an admissible $q,v$ word
for which $R(w,p/q,u/v)\ge c/d$ is achieved.
We write $W$ as $XY^{t_1}XY^{t_2}\cdots XY^{t_q}$ where the $t_i$ are (for the moment)
real variables subject to linear constraints $t_i\ge 0$ and $\sum t_i = v$.

We assume $c/d$ is reduced. Suppose equality is achieved, so that
$w$ has a periodic orbit of period $d$.
After replacing $w$ by a cyclic permutation we assume it begins with a string of $b$'s and
ends with a string of $a$'s, so that the periodic orbit begins in the terminal
string of $Y$'s. We measure the number of $X$'s we jump over or land on at each step. With each
maximal $a^m$ string we jump over precisely $mp+1$ $X$'s, landing on the last one. 
With each maximal $b^m$ string,
if we start at the $i$th $X$, we jump over $l$ $X$'s where $l$ is the smallest number
such that $t_i + t_{i+1} + \cdots + t_{i+l} \ge mu+1$. We can rewrite this condition by
saying that $l$ is the {\em biggest} number 
such that $t_i + t_{i+1} + \cdots + t_{i+l-1}\le mu$; the advantage of this reformulation
is that this latter inequality is {\em homogeneous}. Note that we must allow the
possibility $l=0$ (if $t_i > mu$), in which case this inequality is vacuous.
If we only have $R(w,p/q,u/v)\ge c/d$ then this inequality still holds, but we do not
assume $l$ is the biggest number with this property.

Write $w^d= b^{\beta_k} a^{\alpha_k} \cdots b^{\beta_1} a^{\alpha_1}$ (if $w$ is primitive,
the indices are periodic with period $k/d$). Then we will apply $k$ strings of $a$'s and $b$'s.
Let $l_i$ for $1\le i\le k$ be the value of $l$ as above when we apply the string
$b^{\beta_i}$. Then we obtain an equality $\sum_{i=1}^k (\alpha_ip+1+l_i) = cq$.
For each $i$ define $s_i=\sum_{j=1}^i (\alpha_jp+1) + \sum_{j=1}^{i-1} l_j$. Then our 
inequalities are precisely of the form
$t_{s_i} + t_{s_i+1} + \cdots + t_{s_i+l_i-1} \le \beta_iu$,
indices taken mod $q$.

Since our system of inequalities is homogeneous, linear and defined over $\Z$, 
we can find a solution in integers iff we can find a solution over the reals. We rescale
so that $v=1$. Our algorithm then has the following form:
First, enumerate all non-negative integral solutions to 
$\sum_{i=1}^k l_i = cq - \sum_{i=1}^k (\alpha_ip + 1)$ (there are only
finitely many such solutions, equal to the number of ordered partitions of
$cq - \sum_{i=1}^k (\alpha_ip + 1)$ into $k$ non-negative integers).
For each such solution, define $s_i$ by the formula 
$s_i=\sum_{j=1}^i (\alpha_jp+1) + \sum_{j=1}^{i-1} l_j$.
Then let $u$ be the smallest real number (necessarily rational) such that the
system of equations $\lbrace \sum_{i=1}^q t_i = 1$ and for each $1\le i \le k$,
$t_i \ge 0$ and
$t_{s_i} + t_{s_i+1} + \cdots + t_{s_i+l_i-1} \le \beta_i u \rbrace$ has a solution
(indices taken mod $q$).
The smallest $u$ over all such collections of $l_i$ is the desired quantity. Evidently
it is rational, and achieves the smallest possible value of $R(w,p/q,\cdot)$ which
is $\ge c/d$. If $c/d$ is achieved, $R(w,p/q,u)=c/d$.
\end{proof}

An interesting special case of the stairstep theorem is when $d=q$. 

\begin{proposition}\label{stairstep_lock}
Suppose there is some $t$ for which $R(w,p/q,t)=c/q$, where $p,c$ are coprime to $q$.
Let $w=a^{\alpha_1}b^{\beta_1}\cdots a^{\alpha_m}b^{\beta_m}$. Then 
$R(w,p/q,t)=c/q$ for $t$ on some interval $[u/nq,u/nq+\epsilon)$ where $u/n$ is the
least rational for which
$$c - m - h_a(w)p = \sum_{i=1}^m \left\lfloor \frac{\beta_iu+\epsilon} {n} \right\rfloor$$
\end{proposition}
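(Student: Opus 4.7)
The proposition is a specialization of the Stairstep LP (the proof of Theorem~\ref{stairstep_theorem}) to the case $d = q$, the extra structure coming from the cyclic $\Z/q$-symmetry available when $\gcd(c,q)=1$. Throughout I use $t$ for the rotation number of $b$ and reserve the symbols $u,n$ for the parameters appearing in the statement via $t = u/(nq)$.

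First I would set up the LP exactly as in the proof of Theorem~\ref{stairstep_theorem}. Since $R(w,p/q,t)=c/q$ with $\gcd(c,q)=1$, the element $w$ has a periodic orbit of period exactly $q$, and there is an admissible word $W = XY^{t_1}X\cdots XY^{t_q}$ with the $t_j \ge 0$ real and summing to $1$, together with nonnegative integers $l_1,\ldots,l_m$ satisfying $\sum_i l_i = L := c - m - h_a(w)p$ and the LP constraints $\sum_{j'=0}^{l_i-1}t_{s_i+j'}\le \beta_i t$, where $s_i$ is the starting $X$-index of the $i$-th $b$-block within one application of $w$ (as in the Stairstep proof).

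Next I exploit cyclic symmetry. Because $\gcd(c,q)=1$, iterating $w$ over $q$ applications gives starting indices $s_i^{(j)} \equiv s_i^{(1)} + (j-1)c \pmod q$ in the $j$-th copy, so the shift $k\mapsto k+c\pmod q$ acts transitively on $\{0,\ldots,q-1\}$. The full Stairstep LP for $w^q$ is $\Z/q$-equivariant under this shift; by averaging a minimizer over the orbit in the relaxed (real) LP we obtain a cyclically symmetric extremum in which $t_j = 1/q$ for every $j$ and $l_i^{(j)}=l_i$ is independent of $j$. In this homogeneous regime the constraint collapses to $l_i/q\le \beta_i t$, i.e.\ $l_i\le\lfloor \beta_i t q\rfloor$. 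Writing $u/n := tq$ so that $\beta_i tq = \beta_i u/n$ and $t = u/(nq)$ matches the statement, summing over $i$ yields $L = \sum l_i \le \sum_{i=1}^m \lfloor \beta_i u/n\rfloor$, and the least rational value of $u/n$ at which the nondecreasing step function $x\mapsto\sum_i\lfloor\beta_i x\rfloor$ first reaches $L$ is exactly the one characterized by the displayed equation. The $\epsilon>0$ inside the floors records that, for sufficiently small $\epsilon$, each $\lfloor(\beta_iu+\epsilon)/n\rfloor$ equals $\lfloor\beta_iu/n\rfloor$ and these floor values remain constant as $t$ varies in $[u/nq,\,u/nq+\epsilon)$; for each such $t$ the uniform configuration with $l_i=\lfloor\beta_iu/n\rfloor$ gives a valid period-$q$ orbit with $X$-count per application of $w$ equal to $m + h_a(w)p + L = c$, and hence $R(w,p/q,t)=c/q$ throughout this interval.

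The main obstacle is executing the symmetrization step rigorously in the face of the integer-valued $l_i^{(j)}$: a literal orbit-average need not land in the integer feasible set. I would carry out the averaging first in the real relaxation of the LP, where convexity makes it legitimate, and then extract the integer data $l_i = \lfloor\beta_iu/n\rfloor$ from the resulting identity. The coprimality hypothesis $\gcd(c,q)=1$ is essential here, since it is exactly what makes the cyclic shift act transitively on $\{0,\ldots,q-1\}$ and forces the symmetric extremum to be uniform across all $q$ slots rather than only on a proper sub-orbit.
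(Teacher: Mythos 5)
Your proposal is correct and follows essentially the same route as the paper's proof: both exploit the fact that $\gcd(c,q)=1$ forces the periodic orbit to have period exactly $q$, making the $l_i$ periodic and the LP constraints invariant under the cyclic shift, and then use convexity to symmetrize to the uniform solution $t_j=1/q$, from which the floor-function formula for the threshold $u/n$ drops out. Your explicit attention to doing the averaging in the real relaxation before extracting the integer data $l_i=\lfloor\beta_i u/n\rfloor$ fills in a detail the paper leaves implicit, but it is the same argument.
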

\begin{proof}
Let $W=XY^{t_1}XY^{t_2}\cdots XY^{t_q}$ as in the proof of
Theorem~\ref{stairstep_theorem}. Since $w$ has a periodic orbit of period
exactly $q$, any $b$-string starting on adjacent $X$'s must land in adjacent
$Y^*$ strings. This dramatically cuts down on the number of partitions we need
to consider; for instance, the $l_i$ as in Theorem~\ref{stairstep_theorem} are
periodic with period $k/d$.

Because of this, the constraints for the linear programming problem
are invariant under cyclic permutation of the variables $t_i$, and by convexity,
setting all $t_i$ equal gives an extremal solution. The result follows
\end{proof}

Proposition~\ref{stairstep_lock} gives rise to the following inequality:

\begin{proposition}\label{lock_inequality}
For any positive word $w$ of the form $w=a^{\alpha_1}b^{\beta_1}\cdots a^{\alpha_m}b^{\beta_m}$,
if $R(w,p/q,t)=c/q$ there is an inequality
$$|c/q - h_a(w)p/q - h_b(w)t|\le 2m/q$$
\end{proposition}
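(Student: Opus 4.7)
My plan is to pin down the plateau $\{t : R(w,p/q,t) = c/q\}$ using Proposition~\ref{stairstep_lock}, bound its length via Theorem~\ref{stability_theorem}, and combine these two ingredients to control the ``excess'' $V(t) := c/q - h_a(w)p/q - h_b(w)t$ uniformly on the plateau. I will implicitly use the coprimality hypothesis on $p,c,q$ carried over from Proposition~\ref{stairstep_lock} (the phrasing ``Proposition~\ref{stairstep_lock} gives rise to the following inequality'' suggests this is intended).

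The first step is to apply Proposition~\ref{stairstep_lock}, which writes the plateau as $[u/(Nq),\, u/(Nq) + \delta)$ with $u/N$ characterized (modulo the $\epsilon$-convention at integer boundaries) by the floor identity
\[ c - m - h_a(w)p \;=\; \sum_{i=1}^m \lfloor \beta_i u/N \rfloor. \]
The elementary termwise bound $x - 1 < \lfloor x \rfloor \le x$ transforms this identity into $c - h_a(w)p - h_b(w)u/N \in [0, m]$; dividing by $q$ and writing $t_0 := u/(Nq)$ gives $V(t_0) \in [0, m/q]$.

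The second step is to bound the plateau length $\delta$. Setting $\nu := \max_i \beta_i$ (the largest power of consecutive $b$'s in $w$), Theorem~\ref{stability_theorem} gives
\[ R(w, p/q,\, t_0 + 1/(\nu q)) \;\ge\; c/q + 1/q^2 \;>\; c/q, \]
so $\delta \le 1/(\nu q)$. Combined with the obvious estimate $h_b(w) = \sum \beta_i \le m\nu$, this yields $h_b(w)\delta \le m/q$.

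Combining the two steps, for $t \in [t_0,\, t_0 + \delta)$ we have
\[ V(t) \;=\; V(t_0) - h_b(w)(t - t_0) \;\in\; [-m/q,\, m/q] \;\subset\; [-2m/q,\, 2m/q], \]
which proves the inequality (in fact with the sharper constant $m/q$). The only subtlety worth flagging is the $\epsilon$-convention at integer boundaries in Proposition~\ref{stairstep_lock}'s floor identity: this affects whether the endpoints of $[0, m/q]$ are attained for $V(t_0)$, but does not alter the final bound. All other estimates are routine.
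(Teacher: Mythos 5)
Your proof is correct, and the central computation (reading off the value of $c/q - h_a(w)p/q - h_b(w)t$ at the left endpoint $t_0=u/(nq)$ of the plateau from the floor identity in Proposition~\ref{stairstep_lock}) is exactly the paper's. Where you diverge is in how you handle $t$ away from $t_0$: you bound the plateau \emph{width} by $1/(\nu q)$ via the second half of the Stability Theorem and then absorb $h_b(w)\,\delta \le m/q$ into the error, which costs you a possible negative excursion down to $-m/q$. The paper instead opens with the one-line observation that $R(w,p/q,t)\ge h_a(w)p/q+h_b(w)t$ always (take $a=R_{p/q}$, $b=R_t$), so the quantity $V(t)$ is nonnegative and decreasing in $t$ on the level set, and the whole proposition reduces to the single point $t_0$. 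That route is shorter, needs nothing beyond the linear representation, and gives the slightly sharper conclusion $0\le V(t)\le m/q$; your route is a nice illustration that the Stability Theorem's quantitative converse already controls plateau lengths well enough to recover the same $m/q$ bound. Both arguments share the caveat you correctly flagged: the statement of Proposition~\ref{lock_inequality} does not repeat the coprimality hypotheses of Proposition~\ref{stairstep_lock}, but the proof (yours and the paper's) uses them, both for the floor identity and, in your case, for applying the Stability Theorem to the reduced fraction $c/q$.
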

\begin{proof}
We always have $R(w,p/q,t)\ge h_a(w)p/q + h_b(w)t$ coming from the representation
with $a=R_{p/q}$ and $b=R_t$, so we just need to prove the inequality for $t=u/nq$
as in Proposition~\ref{stairstep_lock}.

We have
$$c/q = m/q + h_a(w)p/q + \frac 1 q \sum_{i=1}^m \left\lfloor \frac{\beta_iu+\epsilon} {n} \right\rfloor$$
Since $\sum \beta_iu/qn = h_b(w)u/nq$ the inequality follows.
\end{proof}

If $q$ is big compared to $m$, Proposition~\ref{lock_inequality} 
says that $R(w,p/q,t)$ is very close to $h_a(w)p/q + h_b(w)t$, which 
is achieved for the linear representation $a=R_{p/q}$, $b=R_t$.
Combined with the inequality in Theorem~\ref{stability_theorem}, we obtain strong
constraints on the values of $R(w,p/q,\cdot)$ achieved, especially if $w$ contains
a substring of the form $a^m$ with $m$ large.

It is natural to wonder whether the inequality in Proposition~\ref{lock_inequality}
can be generalized to cases when the denominator of $R(w,r,s)$ is strictly smaller
than the denominators of $r$ and $s$; we return to this question in 
\S~\ref{slippery_subsection}.

\subsection{Speeding up the computation}

The algorithm described in the proof of Theorem~\ref{stairstep_theorem} has
one big bottleneck, namely the need to enumerate the partitions of
$cq - \sum_{i=1}^k (\alpha_ip + 1)$ into $k$ non-negative integers $l_i$. This
number of partitions is exponential in $k$, which is itself linear in $w$ and
$d\le q$. So the runtime of the algorithm above is {\it a priori} exponential in 
$w$ and $q$. 

Apart from
the inequalities $\sum_{i=1}^q t_i= 1$ and $t_i \ge 0$, we are left with the
``$s$-inequalities'' which are all of the form
$t_{s_i} + t_{s_i+1} + \cdots + t_{s_i+l_i-1} \le \beta_i u$.
For each $i$ with $1\le i\le k$ let $J_i$ denote the ``interval'' $[s_i,s_i+l_i-1]$ in
the ``circle'' $\Z/q\Z$. A partition $L:=l_1,l_2,\cdots,l_k$ of $cq - \sum_{i=1}^k (\alpha_ip + 1)$
thereby determines a combinatorial
configuration of intervals in a circle (weighted by integers $\beta_i$), 
and the minimal $u(L)$ depends only on the
combinatorics of this (weighted) configuration. 

In fact, we consider vectors of non-negative reals $r_i$ so
that the weighted sum of indicator functions $\chi(r):=\sum r_i \chi_{J_i}$ is $\ge 1$
everywhere in the circle. The $s$-inequalities imply $u\ge 1/(\sum r_i \beta_i)$, and 
linear programming duality gives the equality $u(L) = \sup_{\sum r_i\chi_{J_i}\ge 1} 1/(\sum r_i \beta_i)$. Hence the problem
becomes to minimize $\sum r_i \beta_i$ subject to $\sum r_i\chi_{J_i}\ge 1$; 
informally, to cover $S^1$ as ``efficiently as possible''
with the intervals $J_i$. If we call the minimum of $\sum r_i \beta_i$ the {\em efficiency}
of a covering, then since $u=\min_L u(L)$,
we seek the partition giving rise to the {\em covering of least efficiency}.

A {\em partial} partition is a vector $K:=l_1,l_2,\cdots,l_j$ with $j<k$ and 
$\sum_{i=1}^j l_i < cq - \sum_{i=1}^k (\alpha_ip + 1)$; such a partial partition can
be extended to a (complete) partition $L$ as above in potentially many ways; we write $K < L$ if $L$ 
extends $K$ to a complete partition. For a partial partition $K$, define 
$u(K)= \sup 1/(\sum r_i \beta_i)$ over all coverings of $S^1$ by intervals $J_i$ with $1\le i\le j$;
equivalently, we let $u(K)$ be the smallest $u$ for which there is a feasible solution to the
linear programming problem determined by the $s$-inequalities coming from $K$. Each
additional $s$-inequality can only reduce the space of feasible solutions, and therefore
$u(K) \le u(L)$ for every extension $K < L$. But $u=\min_L u(L)$. Consequently, if there
is a complete partition $L'$ with $u(K) > u(L')$, then $u(L) > u(L')$ for every $K< L$,
and therefore we can ignore all extensions $L$ of $K$ when computing $u$.

\begin{example}
Since we assume {\it a priori} that $u \le 1$, we must have $l_i \le q\beta_i$ for all $i$.
\end{example}

If we enumerate partitions lexicographically, inequalities as above let us prune the 
tree of partitions and speed up the computation of $u$. Figure~\ref{abaab_ziggurat} shows 
the ziggurat for $w=abaab$, which is computed using a combination of methods. 

\begin{figure}[htpb]
\labellist
\small\hair 2pt
\endlabellist
\centering
\includegraphics[scale=0.2]{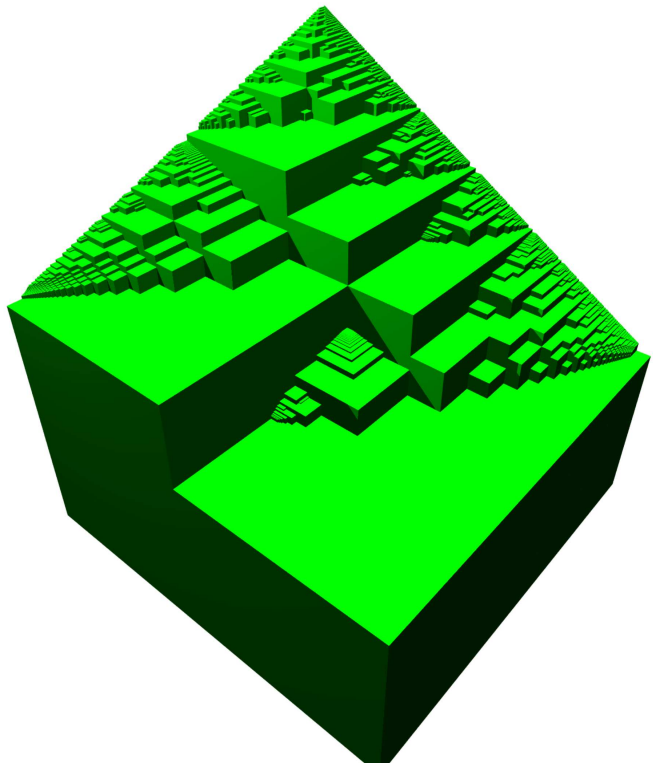}
\caption{The $abaab$ ziggurat.}\label{abaab_ziggurat}
\end{figure}

\subsection{Limits and rotations}

We introduce the notation $R(w,p_1/q_1,p_2/q_2-)$ to mean the
limit of $R(w,p_1/q_1,t)$ as $t \to p_2/q_2$ from below. Since
$R(w,p_1/q_1,t)$ is monotone nondecreasing as a function of $t$, and
takes only rational values with denominators bounded by $q_1$, this
limit is well-defined and rational, with denominator bounded by $q_1$,
and is achieved on a semi-open interval of values $[u/v,p_2/q_2)$
where $u/v$ can be determined by Theorem~\ref{stairstep_theorem}.
We similarly introduce the notation $R(w,p_1/q_1-,p_2/q_2)$ and $R(w,p_1/q_1-,p_2/q_2-)$.

\begin{proposition}\label{approach_from_below}
Let $w$ be positive, and suppose we are given rational numbers
$p_1/q_1$ and $p_2/q_2$. Then $R(w,p_1/q_1-,p_2/q_2)$ is the supremum of
$\rotz(w)$ for all representations with $a$ conjugate to the rotation
$R_{p_1/q_1}$ and $\rotz(b)=p_2/q_2$ (and similarly with the roles of $a$ and
$b$ interchanged), 
and $R(w,p_1/q_1-,p_2/q_2-)$ is the
supremum of $\rotz(w)$ for all representations with $a$ conjugate to
$R_{p_1/q_1}$ and $b$ conjugate to $R_{p_2/q_2}$.
\end{proposition}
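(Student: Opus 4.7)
The plan is to establish $S := \sup \rotz(\rho(w)) = R(w, p_1/q_1-, p_2/q_2)$, where the supremum is over representations with $\rho(a)$ conjugate to $R_{p_1/q_1}$ and $\rotz(\rho(b)) = p_2/q_2$; the remaining two claims (with the roles of $a$ and $b$ swapped, or with both perturbed simultaneously) follow by entirely parallel arguments.

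For the upper bound $S \le R(w, p_1/q_1-, p_2/q_2)$, given $\rho$ with $\rho(a) = h R_{p_1/q_1} h^{-1}$, I would form $\rho_\epsilon$ by replacing $\rho(a)$ with $a_\epsilon := R_{-\epsilon} \circ \rho(a)$. After conjugating by $h$, the map $a_\epsilon$ becomes $q \mapsto h^{-1}(h(q+p_1/q_1) - \epsilon)$, which is everywhere strictly less than $q + p_1/q_1$; by periodicity and compactness it is bounded above by $q + p_1/q_1 - \delta$ for some $\delta = \delta(\epsilon) > 0$, and iterating yields $\rotz(a_\epsilon) \le p_1/q_1 - \delta < p_1/q_1$. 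Hence $\rotz(\rho_\epsilon(w)) \le R(w, \rotz(a_\epsilon), p_2/q_2) \le R(w, p_1/q_1-, p_2/q_2)$, and $C^0$-continuity of $\rotz$ (Proposition~\ref{elementary_properties_of_rot}) as $\epsilon \to 0$ gives $\rotz(\rho(w)) \le R(w, p_1/q_1-, p_2/q_2)$.

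For the reverse inequality, set $c/d = R(w, p_1/q_1-, p_2/q_2)$. By Lemma~\ref{one_rational}, $R(w, \cdot, p_2/q_2)$ is nondecreasing and takes only finitely many rational values with denominator at most $q_2$, so there is an interval $[u/v, p_1/q_1)$ of positive length on which it equals $c/d$. I would pick a Herman number $r' \in (u/v, p_1/q_1)$, so that $R(w, r', p_2/q_2) = c/d$. For small $\eta > 0$, Matsumoto's remark following Lemma~\ref{smooth_family} supplies a smooth representation $\rho'$ with $\rotz(\rho'(a)) = r'$, $\rotz(\rho'(b)) = p_2/q_2$, and $\rotz(\rho'(w)) \ge c/d - \eta$. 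Because $\rho'(a)$ is $C^\infty$ with Herman rotation number $r'$, Herman--Yoccoz furnishes a conjugating homeomorphism $h$ with $\rho'(a) = h R_{r'} h^{-1}$. Now define $\rho$ by $\rho(a) := h R_{p_1/q_1} h^{-1}$ (automatically conjugate to $R_{p_1/q_1}$) and $\rho(b) := \rho'(b)$. Since $R_{r'} < R_{p_1/q_1}$ pointwise on $\R$ and $h$ is order-preserving, $\rho'(a) < \rho(a)$ pointwise; positivity of $w$ then forces $\rho'(w) \le \rho(w)$ pointwise, so $\rotz(\rho(w)) \ge \rotz(\rho'(w)) \ge c/d - \eta$. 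Letting $\eta \to 0$ gives $S \ge c/d$.

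The main obstacle is this second half: one must produce a genuine representation whose $a$-generator is literally conjugate to $R_{p_1/q_1}$ rather than merely having $\rotz(a) = p_1/q_1$. The device of realizing $c/d$ at a smooth Herman approximation $r' < p_1/q_1$ (where Herman--Yoccoz exhibits the conjugating homeomorphism $h$ explicitly) and then sliding the rotation angle from $R_{r'}$ up to $R_{p_1/q_1}$ inside the single conjugacy class determined by $h$ is precisely what bridges the gap, with positivity of $w$ guaranteeing the slide can only increase $\rotz(w)$.
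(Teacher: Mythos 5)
Your proof is correct, and both halves run on the same engine as the paper's (perturb a conjugate of a rotation downward plus $C^0$-continuity of $\rotz$ for one inequality; Denjoy plus positivity-monotonicity to push a generator up to a genuine conjugate of $R_{p_1/q_1}$ for the other), but your lower bound is routed differently. The paper proves the single claim that \emph{any} $a$ with $\rotz(a)<p_1/q_1$ is dominated pointwise by some $\alpha$ conjugate to $R_{p_1/q_1}$: it smooths $a$ from above (Lemma~\ref{smooth_approximation}), applies Denjoy to the smooth approximant $a_t=gR_\theta g^{-1}$, and takes $\alpha=gR_{p_1/q_1}g^{-1}$; this lets it keep $b$ and the (near-)extremal representation untouched and needs none of the rationality machinery. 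You instead discard the extremal representation, use Lemma~\ref{one_rational} to find an interval of constancy below $p_1/q_1$, pick a Herman/irrational $r'$ there, invoke smooth realization of interior values of $X(w,r',p_2/q_2)$, apply Denjoy to that smooth $a$-generator, and slide $R_{r'}$ up to $R_{p_1/q_1}$ inside its conjugacy class. Your route works and is a nice alternative, at the cost of more moving parts; the paper's domination claim is more economical and applies verbatim to the double-minus case without worrying about whether the double limit stabilizes. Two small patches you should make: (i) the conjugating homeomorphism $h$ comes from \emph{Denjoy's} theorem ($\rho'(a)$ is $C^\infty$ with irrational rotation number), not from Herman--Yoccoz, which as stated presupposes topological conjugacy and only upgrades its regularity -- indeed you never need $r'$ to be a Herman number if you use Matsumoto's remark; and (ii) your use of the denominator bound (via Lemma~\ref{one_rational}/Theorem~\ref{positive_rational}) and of the nonempty interior of $X(w,r',p_2/q_2)$ tacitly assumes $w$ is not a power of $a$ or of $b$; those degenerate cases should be noted and disposed of directly (there $\rotz(w)$ is determined by $\rotz(a)$ or $\rotz(b)$ alone, so both sides equal $h_a(w)p_1/q_1+h_b(w)p_2/q_2$ trivially).
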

\begin{proof}
We first claim that for any $a$ with
$\rotz(a)<p_1/q_1$ there is some $\alpha$ conjugate to $R_{p_1/q_1}$
satisfying $\alpha(p)\ge a(p)$ for all $p$. As in the proof
of Lemma~\ref{smooth_approximation} we can produce a smooth
family $a_t$ interpolating between $a^-$ and $a^+$, all $C^0$ close to
$a$. Some $a_t$ with $a_t(p) \ge a(p)$ has irrational rotation number, 
and is therefore conjugate (by some $g$)
to some $R_\theta$ with $\rotz(a)\le \theta < p_1/q_1$.
If we let $\alpha$ be obtained by conjugating $R_{p_1/q_1}$ by $g$
then $\alpha$ satisfies the desired properties.

Conversely, any homeomorphism conjugate to $R_{p_1/q_1}$ can be $C^0$
approximated by homeomorphisms conjugate to $R_t$ for any $t<p_1/q_1$.
\end{proof}

An important special case is $1-$. Because $R_1$ is central,
for any positive $w$ we have $R(w,1-,p/q)=h_a(w)+h_b(w)p/q$ and
$R(w,p/q,1-) = h_a(w)p/q+h_b(w)$. This gives rise to straight lines of slope $1$,
clearly visible in the graphs of 
$R(ab,\cdot,\cdot)$ and $R(abaab,\cdot,\cdot)$ 
in Figures~\ref{JN_ziggurat} and \ref{abaab_ziggurat}.

The Stairstep Theorem (i.e.\/ Theorem~\ref{stairstep_theorem}) implies
that for each $w$ and each $p/q$ there is some smallest $u/v$ with 
$R(w,p/q,u/v) = h_a(w)p/q+h_b(w)$ (and similarly with the roles of 
$a$ and $b$ interchanged). Proposition~\ref{stairstep_lock} says that
$u/v=u/nq$ where $u/n$ is minimized subject to 
$$h_b(w)q - m = \sum_{i=1}^m \left\lfloor \frac{\beta_iu+\epsilon} {n} \right\rfloor$$
Note that the result does not depend on $p$; this ``explains'' why the fringes
of the ziggurats appear periodic on every scale near the sloped edges. 
Solving for integers $u$ and $n$  to minimize $u/n$ is straightforward.

\begin{example}
For $w=ab$, $R(ab,p/q,t)=1+p/q$ on the maximal interval $t=[(q-1)/q,1)$ (this
follows from Theorem~\ref{ab_theorem}, of course).
\end{example}

\begin{example}
For $w=abaab$, $R(abaab,p/q,t)=2+3p/q$ for $3$ coprime to $q$ on the maximal interval
$t=[(q-1)/q,1)$, and $R(abaab,t,p/q)=3+2p/q$ for odd $q$ on the maximal interval
$t=[(2q-1)/2q,1)$.

Without the condition that $h_a(w)$ and $q$ are coprime, the formula is more tricky;
for example, $R(abaab,2/3,t)=4$ on the maximal interval $t=[1/2,1)$, and 
$R(abaab,5/6,t)=9/2$ on the maximal interval $t=[3/4,1)$.
\end{example}

\begin{example}\label{half_abaab_example}
We have $R(abaab,1/2-,t) = 1+R(abb,1/2-,t) = 1+R(ab,1/2-,2t)$ which is equal to
$2+1/(2p+1)$ for $2t\in [(p+1)/(2p+1),p/(2p-1))$ for $p>1$, and $2+p/(2p+1)$ for
$2t\in [2p/(2p+1),(2p+2)/(2p+3))$.

On the other hand, $R(abaab,t,1/2-) = 1 + R(abaab^{-1},t,1/2-)$ where the notation
here simply means representations for which $b$ is conjugate to $R_{1/2}$.
Obviously $R(abaab^{-1},t,1/2-) \le R(ab,t,2t)$. On the other hand, by the proof of
Theorem~\ref{ab_theorem}, we know that for $t=p/q$ with $q$ odd, the extremal $q,q$
word for $R(ab,p/q,2p/q)$ is $(XY)^q$; evidently the dynamics of $a^2$ and $b$ are conjugate
in this case by an element which is itself conjugate to $R_{1/2}$, so we obtain an
equality $R(abaab,p/q,1/2-) = 1 + R(ab,p/q,2p/q)$ for $q$ odd (and in fact
by the same reasoning, 
$R(aba^nb,p/q,1/2-) = 1 + R(ab,p/q,cp/q)$ for $q$ odd and $n$ coprime to $q$).
\end{example}

\subsection{Slippery points}\label{slippery_subsection}

\begin{definition}
Let $w$ be positive. A pair of rational numbers $(r,s)$ is {\em slippery} for $w$ if 
there is a {\em strict} inequality $R(w,r',s') < R(w,r-,s-)$ for all $r'<r$, $s'<s$.
\end{definition}

If $(r,s)$ is not slippery, then there are $r' <r$ and  $s'<s$ with $R(w,r',s')=R(w,r-,s-)$.
It follows that this common value is achieved on the entire region $[r',r)\times [s',s)$
and is therefore rational.

\begin{example}
$(1,t)$ and $(t,1)$ are slippery for all positive words. $(1/2,1/2)$ is slippery for $abaab$,
by Example~\ref{half_abaab_example}. Experimentally, $(1/2,1/2)$ is slippery for
many other words; e.g. $abaababb$, $abaabaaaabb$.
\end{example}

The following conjecture generalizes Proposition~\ref{lock_inequality}, 
and is supported by some experimental evidence.

\begin{slippery_conjecture}\label{generalized_lock_inequality}
For any positive $w$ of the form $w=a^{\alpha_1}b^{\beta_1}\cdots a^{\alpha_m}b^{\beta_m}$, 
if $R(w,r,s)=p/q$ where $p/q$ is reduced, then $|p/q - h_a(w)r - h_b(w)s| \le m/q$.
\end{slippery_conjecture}

Note that for the ``linear''
representation in which $a=R_r$ and $b=R_s$, we have $\rotz(w)=h_a(w)r+h_b(w)s$. So this
conjecture says that the bigger the denominator of $R(w,r,s)$, the closer the extremal
representation (one realizing $\rotz(w)$) must be to the linear representation.
The idea behind this conjecture is that the ``more nonlinear'' an extremal representation
is, the more rigid it is, and the smaller the denominator of
$R(w,r,s)$. Contrapositively, the bigger the denominator, the less rigid (and hence the more
slippery) and the closer to linear. This is only heuristic reasoning (and hand-wavy at that),
but it is in sympathy with the Stability Theorem.

This conjecture is very interesting in view of the following consequence:

\begin{proposition}
The Slippery Conjecture
implies that for any slippery $(r,s)$, we have $R(w,r-,s-)=h_a(w)r+h_b(w)s$.
In particular, it implies that $R(w,r-,s-)$ is rational for all rational $(r,s)$.
\end{proposition}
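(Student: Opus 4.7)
The plan is to combine the Slippery Conjecture's denominator-dependent bound with a compactness/discreteness argument. The first statement is the substantive one; the second statement follows by splitting into the slippery and non-slippery cases and invoking the observation made just before the Example (that non-slippery points already have rational $R(w,r-,s-)$ by the Rationality Theorem).

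For the main claim, fix slippery $(r,s)$ and choose a sequence of rational pairs $(r_n,s_n)$ with $r_n<r$, $s_n<s$, and $(r_n,s_n)\to(r,s)$. By monotonicity of $R(w,\cdot,\cdot)$ (Lemma~\ref{nondecreasing}) together with the definition of $R(w,r-,s-)$ as a limit from below, one has $R(w,r_n,s_n)\to R(w,r-,s-)$. By the Rationality Theorem each $R(w,r_n,s_n)$ is rational; write $R(w,r_n,s_n)=p_n/q_n$ in lowest terms. The Slippery Conjecture gives
\[
\left|\frac{p_n}{q_n}-h_a(w)r_n-h_b(w)s_n\right|\le \frac{m}{q_n}.
\]
If I can show $q_n\to\infty$, then passing to the limit forces $R(w,r-,s-)=h_a(w)r+h_b(w)s$, which is rational as desired.

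The key step, then, is showing $q_n\to\infty$. I argue by contradiction: suppose some subsequence has $q_n\le Q$. The values $R(w,r_n,s_n)$ are all bounded above (e.g. by Lemma~\ref{counting_inequality}) and have denominators bounded by $Q$, so they lie in a finite subset of $\Q$. By monotonicity in both variables, $R(w,r_n,s_n)$ is eventually constant along any monotone approach; choosing the $(r_n,s_n)$ to increase in each coordinate, the common eventual value equals $R(w,r-,s-)$. But then for large $n$ we would have $R(w,r_n,s_n)=R(w,r-,s-)$ with $r_n<r$, $s_n<s$, contradicting slipperiness. Hence $q_n\to\infty$ and the bound from the Slippery Conjecture collapses to equality in the limit.

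For the ``in particular'' statement, let $(r,s)$ be any rational pair. If $(r,s)$ is slippery, the result just proved gives rationality. If $(r,s)$ is not slippery, then by the paragraph following the definition, there exist $r'<r$ and $s'<s$ (which we may take rational, after enlarging slightly using monotonicity) such that $R(w,r',s')=R(w,r-,s-)$, and the Rationality Theorem applied to $(r',s')$ finishes the proof. The only real obstacle is the denominator-blowup argument in the slippery case; everything else is a direct application of results already established.
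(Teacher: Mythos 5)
Your proof is correct and follows essentially the same route as the paper's: the paper's (much terser) argument likewise rests on the observation that at a slippery point the denominators of $R(w,r',s')$ must blow up as $(r',s')\to(r,s)$ from below (since bounded denominators plus monotonicity would force the value $R(w,r-,s-)$ to be attained at some $r'<r$, $s'<s$), after which the Slippery Conjecture's bound $m/q$ collapses the limit to $h_a(w)r+h_b(w)s$. Your write-up simply makes explicit the denominator-blowup step and the non-slippery case that the paper leaves implicit.
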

\begin{proof}
If $(r,s)$ is slippery, there are $r'<r$, $s'<s$ arbitrarily close to $r$, $s$ for which
the denominator of $R(w,r',s')$ is arbitrarily large. Since $R(w,r-,s-)$ is
rational for $(r,s)$ rational and not slippery, the proposition follows.
\end{proof}

The Slippery Conjecture
has been experimentally checked for several words, up through the range $q\le 14$,
which is close to the limit of what can be computed easily.
Figures~\ref{QePlot1} and \ref{QePlot2} show the range of experimentally computed
values of $|p/q - h_a(w)r - h_b(w)s|$ for
words $abaab$ and $abaabbabbbababaab$ respectively, for $q\le 14$. In each case,
the maximal error $m/q$ is precisely achieved for ``most'' $q$.

\begin{figure}[htpb]
\labellist
\small\hair 2pt
\endlabellist
\centering
\includegraphics[scale=0.5]{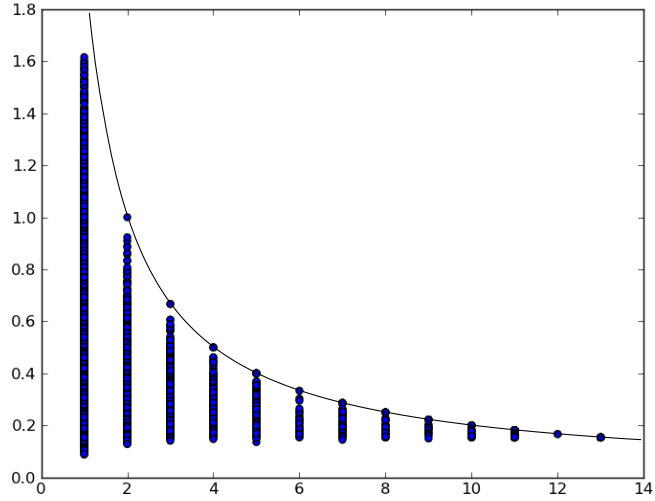}
\caption{Scatter plot of $|R(w,r,s) - h_a(w)r - h_b(w)s|$ versus $q$ for $w=abaab$ for 
all rational values of $r$ and $s$ with denominator at most $13$.  The graph $m/q = 2/q$ is 
also pictured.}\label{QePlot1}
\end{figure}

Note that for any $w$ we have $R(w,1/2-,1/2-)=(h_a(w)+h_b(w))/2$. For, any representation in which
both $a$ and $b$ are conjugate to rigid rotations with rotation number $1/2$ factors through
the infinite dihedral group, which is amenable. On any representation which factors through
an amenable group, rotation number becomes a {\em homomorphism}. This
``explains'' why $(1/2,1/2)$ is likely to be slippery for many $w$.

\begin{figure}[htpb]
\labellist
\small\hair 2pt
\endlabellist
\centering
\includegraphics[scale=0.5]{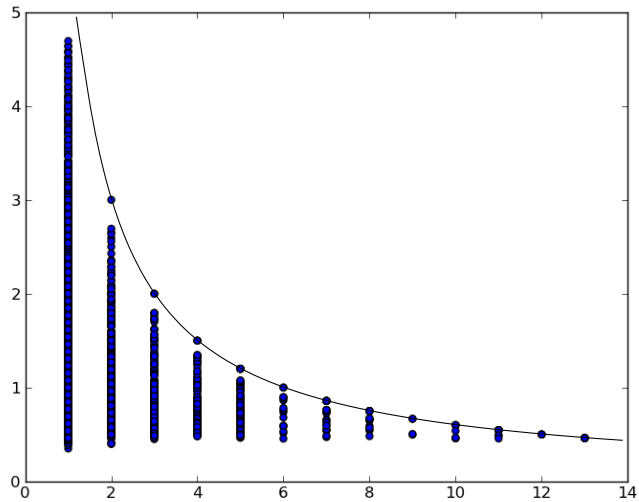}
\caption{Scatter plot of $|R(w,r,s) - h_a(w)r - h_b(w)s|$ versus $q$ and $m/q=6/q$  
for $w=abaabbabbbababaab$.}\label{QePlot2}
\end{figure}

\subsection{Immersions and $\scl$}\label{immersion_subsection}

We now describe an unexpected connection to hyperbolic geometry and stable
commutator length. For the benefit of the reader, we quickly recall some basic elements
of the theory of stable commutator length (for a more substantial introduction 
see \cite{Calegari_scl}).

\begin{definition}\label{scl_definition}
If $G$ is a group, and $g\in[G,G]$, the commutator length $\cl(g)$ is the least number of
commutators in $G$ whose product is $g$, and the {\em stable commutator length} is the
limit $\scl(g):=\lim_{n\to \infty} \cl(g^n)/n$. If $g_1,g_2,\cdots,g_m$ is a collection
of elements with $\prod_i g_i \in [G,G]$ then 
$\scl\left(\sum g_i\right):=\lim_{n\to\infty} \cl\left(\prod_i g_i^n\right)/n$.
\end{definition}

\begin{definition}
If $G$ is a group, a {\em homogeneous quasimorphism} is a function $\phi:G \to \R$ satisfying
$\phi(g^n)=n\phi(g)$ for all $g\in G$ and $n\in\Z$, and for which there is a least non-negative
real number $D(\phi)$ (called the {\em defect}) such that $|\phi(gh)-\phi(g)-\phi(h)|\le D(\phi)$
for all $g,h\in G$.
\end{definition}

Generalized Bavard Duality (\cite{Calegari_scl}, Thm.~2.79) says that
for any $G$, and for any finite set of elements $g_i$ with $\prod g_i \in [G,G]$, there is an equality
$$\scl\Bigl(\sum_i g_i\Bigr) = \sup_\phi \left(\sum \phi(g_i)\right)/2D(\phi)$$ where the supremum is taken over
all homogeneous quasimorphisms.

Any representation $\rho:G \to \homeo^+(S^1)~$ gives rise to a homogeneous quasimorphism
on $G$, namely rotation number. In general, this homogeneous quasimorphism satisfies $D(\phi)\le 1$,
where generically $D(\phi)=1$. Therefore stable commutator length can be used to give an
upper bound on $R$. However what is quite surprising is that this upper bound is often
{\em sharp}, under geometrically meaningful conditions. This is the content of the next theorem.

\begin{theorem}\label{scl_rotation_formula}
Let $\gamma_w$ be the unique geodesic representative of the word $w$ in the hyperbolic
$(q_1,q_2,\infty)$-orbifold $O(q_1,q_2)$. If $\gamma_w$ virtually bounds an
positively immersed subsurface $S$ in
$O(q_1,q_2)$ then $$R(w,1/q_1-,1/q_2-)=h_a(w)/q_1+h_b(w)/q_2+\area(S)/2\pi$$ 
In general there are
inequalities $$R(w,1/q_1-,1/q_2-)\ge h_a(w)/q_1+h_b(w)/q_2+A(\gamma_w)/2\pi$$ where
$A(\gamma_w)$ is the algebraic area enclosed by $\gamma_w$, and
$$R(w,1/q_1-,1/q_2-)\le h_a(w)/q_1+h_b(w)/q_2+2\;\scl_{G(q_1,q_2)}(w)$$ where $\scl$ (denoting
stable commutator length) is computed in the group 
$G(q_1,q_2):=\langle a,b \;|\; a^{q_1}=b^{q_2}=1\rangle$.
\end{theorem}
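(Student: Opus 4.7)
The strategy is to prove the two inequalities from opposite directions and then show they collapse to a common value under the positive-immersion hypothesis. For the lower bound, the uniformization of $O(q_1,q_2)$ as a hyperbolic orbifold produces a discrete faithful representation $G(q_1,q_2) \hookrightarrow \PSL(2,\R) \subset \homeo^+(S^1)$ under which $a, b$ are elliptic of orders $q_1, q_2$, and hence conjugate to $R_{1/q_1}, R_{1/q_2}$ respectively. Composing with the quotient $F \twoheadrightarrow G(q_1,q_2)$ and lifting to $\homeo^+(S^1)^\sim$ with $\rotz(a) = 1/q_1$ and $\rotz(b) = 1/q_2$, one obtains an admissible representation for the computation of $R(w,1/q_1-,1/q_2-)$. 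The classical formula identifying the rotation number of a Fuchsian word with the signed hyperbolic area enclosed by its geodesic representative (Poincar\'e's formula; see \cite{Calegari_scl}) gives $\rotz(\rho(w)) = h_a(w)/q_1 + h_b(w)/q_2 + A(\gamma_w)/2\pi$, which establishes the second inequality.

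For the upper bound, given any representation $\rho : F \to \homeo^+(S^1)^\sim$ with $\rho(a), \rho(b)$ conjugate to $R_{1/q_1}, R_{1/q_2}$, define
$$\phi_\rho(g) := \rotz(\rho(g)) - h_a(g)/q_1 - h_b(g)/q_2.$$
Since $\rotz$ has defect at most $1$ on $\homeo^+(S^1)^\sim$ (the Milnor--Wood inequality already invoked in Lemma~\ref{counting_inequality}) and $h_a, h_b$ are genuine homomorphisms, $\phi_\rho$ is a homogeneous quasimorphism on $F$ of defect $\leq 1$ that vanishes on the generators $a, b$, and therefore on the relators $a^{q_1}, b^{q_2}$ of $G(q_1,q_2)$. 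A standard homogenization argument---setting $\bar\phi_\rho(g) := \lim_{N\to\infty} \phi_\rho(\tilde g^N)/N$ for any lift $\tilde g \in F$ of $g \in G(q_1,q_2)$---produces a well-defined homogeneous quasimorphism $\bar\phi_\rho$ on $G(q_1,q_2)$, still of defect $\leq 1$, with $\bar\phi_\rho(w) = \rotz(\rho(w)) - h_a(w)/q_1 - h_b(w)/q_2$. Because $G(q_1,q_2)$ has finite abelianization, a power of $w$ lies in the commutator subgroup; generalized Bavard duality then yields $2\,\scl_{G(q_1,q_2)}(w) \geq \bar\phi_\rho(w)$, and taking the supremum over $\rho$ gives the third inequality.

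Finally, if $\gamma_w$ is virtually bounded by a positively immersed subsurface $S$, extremal surface theory for $\scl$ in a hyperbolic orbifold combined with Gauss--Bonnet identifies $2\,\scl_{G(q_1,q_2)}(w)$ with $\area(S)/2\pi$, while positivity of the immersion forces the signed area $A(\gamma_w)$ to equal the same quantity (with the normalization of the theorem statement); the upper and lower bounds then collapse to a single value, giving the equality. The step I expect to be the main obstacle is verifying that the descent of $\phi_\rho$ to $G(q_1,q_2)$ is lossless in the sense that $\bar\phi_\rho(w) = \phi_\rho(w)$---one must use the defect bound together with the vanishing of $\phi_\rho$ on each conjugate of the relators to control the error terms in the homogenization limit---and, for the equality case, pinning down the precise normalization under which $\scl$ realizes the Gauss--Bonnet area of a positively immersed extremal surface. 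The remaining inputs, namely the Fuchsian rotation-number formula and generalized Bavard duality, are then routine applications of \cite{Calegari_scl}.
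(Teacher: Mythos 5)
Your overall architecture---the Fuchsian representation plus Gauss--Bonnet for the lower bound, Milnor--Wood plus Bavard duality for the upper bound, and extremality of positively immersed surfaces for the equality case---is the same as the paper's. The genuine gap is in your descent of $\phi_\rho$ from $F$ to $G(q_1,q_2)$, which you correctly flag as the delicate step but for which your proposed fix does not work. Two lifts of $g\in G(q_1,q_2)$ to $F$ differ by an element of the normal closure $N$ of $\lbrace a^{q_1},b^{q_2}\rbrace$, and a general element of $N$ is a product of many conjugates of the relators. A homogeneous quasimorphism of defect $\le 1$ that vanishes on every conjugate of the relators can still be unbounded on such products (the error accumulates linearly in the number of factors), so ``the defect bound together with the vanishing of $\phi_\rho$ on each conjugate of the relators'' does not control the homogenization: $\bar\phi_\rho$ is not obviously well defined, and there is no formal reason for $\bar\phi_\rho(w)=\phi_\rho(w)$.

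The missing input is dynamical, and it is the pivot of the paper's argument. Since $\rho(a)$ is conjugate in $\homeo^+(S^1)^\sim$ to $R_{1/q_1}$, the element $\rho(a)^{q_1}$ is a conjugate of the central element $R_1$ and therefore \emph{equals} $R_1$; likewise $\rho(b)^{q_2}=R_1$. Hence every representation admissible for $R(w,1/q_1-,1/q_2-)$ factors through the central extension $H(q_1,q_2)=\langle a,b,t \mid [t,a]=[t,b]=1,\ a^{q_1}=t,\ b^{q_2}=t\rangle$ (the unit tangent bundle group), with $t\mapsto R_1$. On $H$, the difference of $\rotz\circ\rho$ and the homomorphism $a\mapsto 1/q_1$, $b\mapsto 1/q_2$, $t\mapsto 1$ is a homogeneous quasimorphism of defect $\le 1$ vanishing on the center, and such a quasimorphism descends to $G$ exactly (because $(hz)^n=h^nz^n$ for central $z$, so it is additive across the center). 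Equivalently, as the paper does, one applies Bavard duality in $H$ and then uses that $H\to G$ is an isometry for $\scl$, since $H$ is an amenable extension of $G$ and $H^2(G;\R)=0$. Either way, the factorization through $H$ is the fact that makes the descent lossless; it is not a formal property of quasimorphisms on $F$. Once this is supplied, your conclusion $2\,\scl_{G(q_1,q_2)}(w)\ge \phi_\rho(w)$ (using that $a,b$ are torsion in $G$, so their $\scl$ vanishes) and the equality case via extremality of positively immersed surfaces go through as in the paper.
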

\begin{proof}
Let $H(q_1,q_2)$ be the central extension of $G(q_1,q_2)$, defined by the presentation
$H(q_1,q_2):=\langle a,b,t \;|\; [t,a]=[t,b]=1,a^{q_1}=t,b^{q_2}=t\rangle$. $G(q_1,q_2)$ is the
(orbifold) fundamental group of the $(q_1,q_2,\infty)$-orbifold $O(q_1,q_2)$, and $H(q_1,q_2)$ is
the fundamental group of its (orbifold) unit tangent bundle.

The (unique complete finite area)
hyperbolic structure on $O(q_1,q_2)$ gives rise to a representation $G \to \PSL(2,\R)$ which
can be thought of as a subgroup of $\homeo^+(S^1)$. This
is covered by a representation $H \to \homeo^+(S^1)^\sim$. The elements $a$ and $b$
are conjugate to rigid rotations, and for this representation
$\rotz(w)-h_a(w)/q_1-h_b(w)/q_2$ is equal to $A(\gamma_w)/2\pi$ (this essentially follows
from the Gauss-Bonnet theorem). This gives one inequality.

Generalized Bavard duality (applied to any representation)
and the fact that the defect of any rotation quasimorphism is at most $1$, 
gives the upper bound $R(w,1/q_1-,1/q_2-)-h_a(w)/q_1-h_b(w)/q_2 \le 2\;\scl_H(w-h_a(w)a-h_b(w)a)$.
If $\gamma_w$ virtually bounds a positively immersed surface $S$ in $O$, this surface and the
rotation quasimorphism are both extremal for $w$ (see \cite{Calegari_faces} for details). 
In this case, both inequalities are equalities (with $\area(S)=A(\gamma_w)$).

Since $H$ is an amenable extension of $G$, and $H^2(G;\R)=0$, the projection from $H$ to
$G$ is an isometry for $\scl$, so $\scl_H(w-h_a(w)a-h_b(w)b)=\scl_G(w-h_a(w)a-h_b(w)b)$. But
in $G$, the elements $a$ and $b$ have finite order, so this last term is equal to $\scl_G(w)$.
\end{proof}

\begin{remark}
The group $G(q_1,q_2)$ is virtually free, and therefore $\scl$ is rational and can be computed
in polynomial time (for fixed $q_1,q_2$) by the method of \cite{Calegari_rational} 
(also see \cite{Calegari_scl}, Chapter 4).
In fact, an algorithm due to Walker lets one compute $\scl$ in polynomial time in word length
{\em and in $q_1,q_2$}.

Moreover, for fixed $q_1,q_2$, the function $w \to A(\gamma_w)$ is an example of a 
{\em bicombable function} (see \cite{Calegari_Fujiwara}), and can be computed in
{\em linear} time. This makes it practical to actually compute the bounds 
in Theorem~\ref{scl_rotation_formula}.
\end{remark}

\begin{remark}
The question of which geodesics virtually bound immersed subsurfaces of hyperbolic surfaces
is very subtle and difficult. This question is pursued in some detail in
\cite{Calegari_faces}, and in the special case of the modular orbifold in \cite{Calegari_Louwsma}.
\end{remark}

\begin{example}
The special case $w=ab$ is particularly simple. In this case,
$\gamma_{ab}$ does not have a geodesic representative and one must work with the cusp
representative instead, which is the boundary of the $(q_1,q_2,\infty)$ orbifold, and
therefore always tautologically bounds $O(q_1,q_2)$. Hence
$R(ab,1/q_1-,1/q_2-)=1/q_1+1/q_2+\area(O(q_1,q_2))/2\pi=1$.
\end{example}

In order to apply Theorem~\ref{scl_rotation_formula} for general 
$R(w,p_1/q_1-,p_2/q_2-)$ we use the following fact. Let $\phi_{p_1,p_2}:F \to F$ be defined
on generators by $a\to a^{p_1}$ and $b\to b^{p_2}$. Then
$R(\phi_{p_1,p_2}(w),1/q_1,1/q_2) = R(w,p_1/q_1,p_2/q_2)$ and similarly with $p_i/q_i$ replaced by
$p_i/q_i-$. 

\begin{example}[$a^{p_1}b^{p_2}$]
$R(ab,p_1/q_1-,p_2/q_2-) = R(a^{p_1}b^{p_2},1/q_1-,1/q_2-)$. Let $G=G(q_1,q_2)$ as above.
Now, $\scl_G(a^{p_1}b^{p_2})=\scl_G(ab)=1/2-1/2q_1-1/2q_2$ so we get the inequality
$R(ab,p_1/q_1-,p_2/q_2-) \le 1 + (p_1-1)/q_1 + (p_2-1)/q_2$.
This is never sharp if at least one $p_i>1$, and therefore {\em none} of the
geodesics $\gamma_{a^{p_1}b^{p_2}}$ virtually bound a positively 
immersed surface in $O(q_1,q_2)$ except for $\gamma_{ab}$.
\end{example}

\subsection{Isobars}

In this section we prove a structure theorem for the level sets of $R$.
But first we must prove a couple of lemmas.

\begin{lemma}\label{feasible_rots}
Let $I_1,I_2,\cdots,I_k$ be a finite set of closed intervals in $S^1$,
and for each $i$ let $I_i^-$ be the initial point and $I_i^+$ the final
point of the interval (with respect to the orientation on $S^1$).
The set of values of $s$ for which there is a homeomorphism $a(\cdot)$ conjugate to
a  rotation $R_s$, and such that $a(I_i^-) = I_i^+$ for all $i$,
is a connected interval (possibly open or half-open) 
with rational endpoints. Moreover, the endpoints depend only
on the combinatorics of the $I_j$, and can be computed by linear programming.
\end{lemma}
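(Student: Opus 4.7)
The plan is to recast the question as a linear programming feasibility problem. Writing $a = h^{-1}R_s h$ with $h$ an orientation-preserving homeomorphism of $S^1$, the condition $a(I_i^-)=I_i^+$ becomes $h(I_i^+)-h(I_i^-)\equiv s\pmod 1$. Enumerate the $2k$ endpoints as $p_1,\dots,p_{2k}$ in cyclic order around $S^1$, and let $\sigma(i),\tau(i)\in\{1,\dots,2k\}$ be the indices with $p_{\sigma(i)}=I_i^-$ and $p_{\tau(i)}=I_i^+$. The data $y_\ell:=h(p_\ell)$, lifted to $\R$ and normalized so that $0=y_1<y_2<\cdots<y_{2k}<1$, encode all the relevant information about $h$, and any such tuple extends to an orientation-preserving homeomorphism of $S^1$ (e.g.\ piecewise linearly).

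Under this encoding, the condition $h(I_i^+)-h(I_i^-)\equiv s\pmod 1$ becomes the rational affine equation $y_{\tau(i)}-y_{\sigma(i)}=s-n_i$, where $n_i\in\{0,1\}$ is determined combinatorially by whether $\sigma(i)<\tau(i)$ or $\sigma(i)>\tau(i)$ in the cyclic order. The set of admissible $s$ is therefore the projection onto the $s$-axis of the bounded convex polytope $P\subset\R^{2k+1}$ cut out by these $k$ equations and the strict inequalities $0=y_1<y_2<\cdots<y_{2k}<1$. By convexity, $s(P)$ is an interval; the extreme values of the linear functional $s$ on the closure $\overline{P}$ are attained at rational vertices, hence are rational. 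An extreme value actually lies in $s(P)$ exactly when it is achieved with every strict inequality unsaturated, and otherwise is only an infimum or supremum: this accounts for the open, half-open, and closed alternatives. Since $P$ depends only on $\sigma(i),\tau(i),n_i$, both the interval and its endpoint types depend only on the cyclic combinatorics of the $I_j$, and can be computed by standard LP.

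Conversely, given any $(y_1,\dots,y_{2k},s)\in P$, one chooses an orientation-preserving homeomorphism $h$ with $h(p_\ell)=y_\ell$ and sets $a=h^{-1}R_s h$; by construction, $a$ is conjugate to $R_s$ and satisfies $a(I_i^-)=I_i^+$. The main obstacle is just the combinatorial bookkeeping: correctly setting up the integer offsets $n_i$, which record how each arc $I_i$ wraps around the circle relative to the chosen fundamental domain, and (for the open/closed distinction) identifying which strict inequalities are saturated at the LP extrema. Once these are in hand, the convexity of $P$ and the rationality of its defining data deliver everything by standard linear programming.
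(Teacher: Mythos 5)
Your proposal is correct and takes essentially the same route as the paper: both translate the existence of the conjugating homeomorphism into a rational linear-programming feasibility problem (the paper uses the lengths $t_i$ of the intervals of the common refinement as variables, you use the endpoint positions $y_\ell$, which is the same LP after a linear change of variables), and both conclude connectedness from convexity and rationality from the rational defining data. Your extra care with the wrap-around offsets $n_i$ and the open/half-open endpoint cases fills in details the paper leaves implicit.
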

\begin{proof}
A rotation number $s$ achieved by some $a$ as above is said to be {\em feasible}
for the collection of intervals. It is obvious from the definition that
the set of feasible $s$ for a given collection of intervals depends only on the
combinatorics.

The $I_j$ and their complements partition $S^1$ into disjoint intervals $J_i$. We
assign a variable $t_i$ to each $J_i$. Showing $s$ is feasible is equivalent to
the feasibility of the linear programming problem defined by the following
constraints:
\begin{enumerate}
\item{each $t_i$ is in $(0,1)$}
\item{for each $I_j$ the set of $t_i$ for which the $J_i \subset I_j$ sums to $s$}
\item{the sum of all the $t_i$ is $1$}
\end{enumerate}
The connectedness of the set of feasible $s$ follows from convexity. Rationality
follows from the form of the linear programming problem. The proof follows.
\end{proof}

\begin{remark}
The set of $s$ feasible for a given finite collection of intervals could easily
be empty; for example, if one interval is properly contained in another.
\end{remark}

Recalling the definition of $X(w,r,s)$ as in Lemma~\ref{interchange}, we
introduce the notation $X(w:t)$ for the set of $r,s$ for which $t\in X(w,r,s)$.
Evidently $X(w:t)$ is the intersection of the set of $r,s$ with $R(w,r,s)\ge t$
with the set of $r,s$ with $R(w,-r,-s)\ge -t$.

\begin{lemma}\label{rotations_dense}
Let $w$ be positive. $X(w:t)$ is the closure of the set of pairs $(r,s)$ such that there
is a representation with $a$ conjugate to a rotation $R_r$ and 
$b$ conjugate to a rotation $R_s$, and with $\rotz(w)=t$.
\end{lemma}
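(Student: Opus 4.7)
The plan is to prove the two containments. Let $Y$ denote the set of pairs $(r,s)$ admitting a representation with $a$ conjugate to $R_r$, $b$ conjugate to $R_s$, and $\rotz(w) = t$; the claim is that $\overline{Y} = X(w:t)$.

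For the easy inclusion $\overline{Y} \subseteq X(w:t)$: by definition $Y \subseteq X(w:t)$, and
$$X(w:t) = \{(r,s) : R(w,r,s) \ge t\} \cap \{(r,s) : R(w,-r,-s) \ge -t\}$$
is closed by the semicontinuity of $R$ (Lemma~\ref{lower_semicontinuous}), so the closure of $Y$ lies in $X(w:t)$.

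For the density inclusion $X(w:t) \subseteq \overline{Y}$, the workhorse is Lemma~\ref{smooth_family}: if both $r'$ and $s'$ are Herman numbers and $t$ lies in the interior of $X(w,r',s')$, then $t$ is realized by a representation in which $a$ and $b$ are smoothly conjugate to $R_{r'}$ and $R_{s'}$, so such a pair $(r',s')$ lies in $Y$. The density thus reduces to showing that such Herman pairs accumulate on every point of $X(w:t)$. For a ``strict interior'' point $(r,s)$ of $X(w:t)$ (meaning $R(w,r,s) > t$ and $R(w,-r,-s) > -t$), the value $t$ lies strictly inside the interval $X(w,r,s)$; choosing Herman $(r',s')$ close to $(r,s)$, the two strict inequalities persist by semicontinuity of $R$ combined with the monotonicity of $R$ in each variable for positive $w$ (Lemma~\ref{nondecreasing}), so $t \in \mathrm{int}\,X(w,r',s')$, and Lemma~\ref{smooth_family} yields $(r',s') \in Y$.

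The remaining case is a boundary point $(r,s) \in X(w:t)$, where at least one defining inequality is an equality. The plan is to approximate such a point by strict interior points and then invoke the previous paragraph. The interior should be dense in $X(w:t)$ because of the ``anti-diagonal band'' shape: $X(w:t)$ is the intersection of one upward-closed set $\{R(w,\cdot,\cdot) \ge t\}$ and one downward-closed set $\{R(w,-\cdot,-\cdot) \ge -t\}$, and a boundary point can be approached from within the band by perturbing in the direction where the tight inequality relaxes. The main obstacle is executing this approximation cleanly: the Stability Theorem~\ref{stability_theorem} says that $R$ is locally constant from the right at rationals, so small rightward perturbations need not strictly increase $R$ at all. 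The perturbation therefore must be chosen large enough to cross a ``step'' of $R$ (the step structure being combinatorially controlled by the Stairstep Theorem~\ref{stairstep_theorem}) while remaining arbitrarily close to $(r,s)$. Combining the monotonicity and semicontinuity of $R$ with the discreteness of its values on rational slices should give the required density and complete the proof.
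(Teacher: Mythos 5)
Your first inclusion $\overline{Y}\subseteq X(w:t)$ is correct, and your use of Herman numbers plus Lemma~\ref{smooth_family} to handle pairs $(r,s)$ with $t$ in the \emph{interior} of the interval $X(w,r,s)$ is exactly right and is also the engine of the paper's argument. But you have correctly identified, and then not resolved, a genuine gap: the pairs $(r,s)\in X(w:t)$ where $t$ is an \emph{endpoint} of $X(w,r,s)$ (that is, $t=R(w,r,s)$ or $t=-R(w,-r,-s)$). Your plan is to approximate such a point by pairs $(r',s')$ for which $t\in\mathrm{int}\,X(w,r',s')$, and you acknowledge this is delicate because of the plateau structure of $R$; the final sentence (``should give the required density'') is a hope, not a proof. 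In fact your plan cannot work in full generality: if $X(w,r,s)=\{t\}$ is a single point, then for nearby $(r',s')$ the interval $X(w,r',s')$ either drifts away from $t$ or stays degenerate, and no nearby pair puts $t$ in an open interior.

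The ingredient you are missing is an elementary but crucial observation the paper makes: taking $a=R_r$ and $b=R_s$ on the nose gives $\rotz(w)=h_a(w)r+h_b(w)s$, so $h_a(w)r+h_b(w)s\in X(w,r,s)$ \emph{always}, and this linear representation has $a$, $b$ tautologically conjugate to rotations. The paper's proof is a dichotomy: if $t=h_a(w)r+h_b(w)s$ then $(r,s)$ is already in $Y$ with no approximation needed, and in particular this covers the degenerate case $X(w,r,s)=\{t\}$; if instead $t\ne h_a(w)r+h_b(w)s$, the gap between these two values gives room to move $(r,s)$ slightly along the diagonal (using the one-sided continuity of $\max X(w,r+u,s+u)$ and $\min X(w,r+u,s+u)$ in $u$) so that $t$ falls in the interior of the interval, and then your Herman-number argument closes the loop. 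That dichotomy --- not density of the ``strict interior'' points of $X(w:t)$ --- is what makes the boundary case tractable, and it is the idea your write-up needs.
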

\begin{proof}
Suppose $(r,s)\in X(w:t)$. As $u$ varies in $(-\epsilon,\epsilon)$, the maximum
of $X(w,r+u,s+u)$ is continuous from the right, and the minimum is continuous
from the left. In order to complete the proof we make two observations.

On the one hand, by Lemma~\ref{smooth_family},
for $(r,s)$ irrational Herman numbers, any $t$ in the interior of $X(w,r,s)$ 
can be achieved by a smooth representation for which $a$ is conjugate to $R_r$
and $b$ is conjugate to $R_s$.

On the other hand, taking $a=R_r$ and $b=R_s$ on the nose,
we see that $h_a(w)r+h_b(w)s \in X(w,r,s)$ for all $(r,s)$. 

So we have two possibilities: either there is $u$ 
arbitrarily close to $0$ with $(r+u,s+u)$ both Herman irrationals and
$t$ in the interior of $X(w,r+u,s+u)$,
or else $h_a(w)r+h_b(w)s=t$; in either case we are done by one of the
two observations above.
\end{proof}

We are now ready to prove the main result of this section.

\begin{theorem}[Isobar Theorem]\label{isobar_theorem}
Let $w$ be positive. For any rational $p/q$ the set of $r,s\subset [0,1]\times[0,1]$ 
such that $R(w,r,s)\ge p/q$ is a finite sided rational polyhedron, whose boundary 
consists of finitely many horizontal or vertical segments.
\end{theorem}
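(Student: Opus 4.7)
The plan is to describe $E:=\{(r,s)\in[0,1]\times[0,1]:R(w,r,s)\ge p/q\}$ as the region lying above the graph of a non-increasing function, to show its ``staircase'' boundary has rational corners, and then to establish finiteness of the number of steps. By Lemma~\ref{lower_semicontinuous} $E$ is closed, and by Lemma~\ref{nondecreasing} it is an upper set in both coordinates, so $E=\{(r,s):s\ge s^*(r)\}$ where $s^*(r):=\inf\{s\in[0,1]:R(w,r,s)\ge p/q\}$ (taken as $+\infty$ if no such $s$ exists) is non-increasing, with the infimum attained whenever finite. Proving the theorem thus reduces to showing that $s^*$ is a step function with only finitely many rational values, jumping only at finitely many rational arguments.

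For rational $r_0$, Lemma~\ref{one_rational} asserts that $R(w,r_0,\cdot)$ takes only finitely many rational values, and the Stairstep Theorem (Theorem~\ref{stairstep_theorem}) shows that $s^*(r_0)$ is rational and attained. Using the semi-continuity extracted from the proof of Lemma~\ref{lower_semicontinuous} (a limit of extremal representations remains extremal), one sees that $\lim_{r\to r_0^+}s^*(r)=s^*(r_0)$, so $s^*$ is right-continuous everywhere. An entirely symmetric Stairstep argument, with the roles of $a$ and $b$ exchanged, shows that whenever $s^*$ attains a rational level $s_0$ on a nontrivial interval of $r$, the left endpoint of that interval is itself rational. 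Hence, once finiteness of the number of values of $s^*$ is known, every corner of $E$ will automatically lie in $\Q^2$.

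The heart of the proof is finiteness, and this is where I expect the main work. The strategy is to obtain a uniform bound $d\le D$ on the denominators of the reduced rational values $c/d=R(w,r_0,s^*(r_0))$ occurring at corners of $E$. Given such a $D$, the converse half of the Stability Theorem (Theorem~\ref{stability_theorem}) yields $R(w,r_0+1/(mD),s^*(r_0))\ge c/d+1/D^2>p/q$ at every corner, with $a^m$ the longest $a$-block of $w$; combined with monotonicity this forces any two distinct corners to be separated by at least $1/(mD)$ in the $r$-direction, so compactness of $[0,1]$ then produces only finitely many corners. Producing the uniform bound $d\le D$ is the main obstacle, and the plan is to revisit the linear-programming and admissible-word machinery developed in the proof of the Stairstep Theorem together with Lemma~\ref{sigmas_disjoint}, and to argue that the extremal admissible word achieving a corner value of $R$ has combinatorial size bounded in terms of $w$ and $p/q$ alone, so that the value $c/d$, whose denominator is at most this combinatorial size, has $d\le D$ for some $D=D(w,p/q)$.

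Granting finiteness, $s^*$ is a step function with finitely many rational steps and finitely many rational jump points, so $E$ is a finite union of closed upper rectangles $[r_i,r_{i+1}]\times[s^*(r_i),1]$ with rational corners. Its boundary is then a staircase consisting of finitely many horizontal and vertical rational segments, as required.
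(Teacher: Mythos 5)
Your reduction of the theorem to the statement that $s^*(r)=\inf\{s:R(w,r,s)\ge p/q\}$ is a non-increasing step function with finitely many rational steps is correct (closedness of $E$ from Lemma~\ref{lower_semicontinuous}, upper-set structure from Lemma~\ref{nondecreasing}). But the heart of the theorem is exactly the finiteness of the number of steps, and you do not prove it: you name it as ``the main obstacle'' and offer only a plan. Moreover, the mechanism you propose for extracting finiteness is flawed in two ways. First, the separation argument via the converse half of the Stability Theorem does not work: at a corner $(r_0,s_0)$ the inequality $R(w,r_0+1/(mD),s_0)\ge c/d+1/D^2>p/q$ carries no information beyond monotonicity, since $R(w,r,s_0)\ge R(w,r_0,s_0)\ge p/q$ for all $r\ge r_0$ anyway; what separates two corners $(r_0,s_0)$ and $(r_1,s_1)$ with $r_0<r_1$, $s_0>s_1$ is the behaviour of $R$ at the \emph{lower} level $s_1$ to the \emph{left} of $r_1$, which the Stability inequality says nothing about. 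Second, the uniform denominator bound $d\le D(w,p/q)$ is not established and is not plausibly obtained the way you suggest: the admissible words computing $R(w,p_1/q_1,p_2/q_2)$ have length $q_1+q_2$, which is unbounded over the rational corners, and the Rationality Theorem only bounds the denominator of $R$ by $\min(q_1,q_2)$, again unbounded.

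For comparison, the paper gets finiteness from a different source. Using Lemma~\ref{rotations_dense}, the relevant set is (up to closure) realized by representations in which $a$ and $b$ are conjugate to rigid rotations and $\rotz(w)=p/q$; for such a representation one tiles $[0,p]$ by $qM$ intervals of the form $[t,a(t)]$ or $[t,b(t)]$ (with $M$ the word length of $w$), and there are only finitely many combinatorial types of the projected configuration. Each type determines, via the linear-programming Lemma~\ref{feasible_rots}, a rational rectangle of feasible $(r,s)$, and the union of these finitely many rectangles is dense in the set in question. The bounded combinatorial size $qM$, depending only on $w$ and $p/q$, is precisely the ingredient your outline is missing; without it (or some substitute) your argument does not close.
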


We call the frontier of $R(w,r,s)\ge p/q$ the $p/q$ {\em isobar}. This is part 
of the frontier of the level set $R(w,r,s)=p/q$. Note that it is {\em not} true 
in general that the closure of the level set itself is a finite sided polyhedron, 
even for $w=ab$. Thus the set of
$r,s$ such that $R(w,r,s)\le p/q$ can be in general quite complicated.
For example, the level set $R(ab,\cdot,\cdot)=1$ has the line $r+s=1$ in its frontier,
whereas the level set $R(abaab,\cdot,\cdot)=2$ has infinitely many segments in its frontier.
See Figures~\ref{JN_ziggurat} and \ref{abaab_ziggurat}.

\begin{proof}
By the discussion above, it is sufficient to prove that $X(w:p/q)$ is a finite
sided rational polygon whose boundary consists of horizontal and vertical segments.
By Lemma~\ref{rotations_dense} we need to compute the set of $r,s$ for which
there are representations with $a$ and $b$ conjugate to rotations through
$r$ and $s$ respectively, and $\rotz(w)=p/q$. So consider such a representation.

Let $w=a^{\alpha_1}b^{\beta_1}\cdots a^{\alpha_m}b^{\beta_m}$
and let $M=\sum \alpha_i + \sum \beta_i$ be the word length of $w$.
By the hypothesis on the rotation number, we can tile $[0,p]$ by $qM$ intervals,
each of which is of the form $[t,a(t)]$ or $[t,b(t)]$. Consider the projection
of these $qM$ intervals to the circle; there are a large but finite number
of combinatorial types for the image; call such a combinatorial type a
{\em partition}. For each partition, we compute the set of $r$ feasible for the
$a$-intervals and the set of $s$ feasible for the $b$-intervals, by
Lemma~\ref{feasible_rots}; the set of $(r,s)$ compatible with both is therefore
a rectangle with rational vertices. The union of the interiors of these rectangles
over all partitions is dense in $X(w:p/q)$; the theorem follows.
\end{proof}

Figure~\ref{isobar_abaab_2_to_3} depicts $X(abaab:p/q)$ for a few simple values of $p/q$.

\begin{figure}[htpb]
\labellist
\small\hair 2pt
\endlabellist
\centering
\includegraphics[scale=0.4]{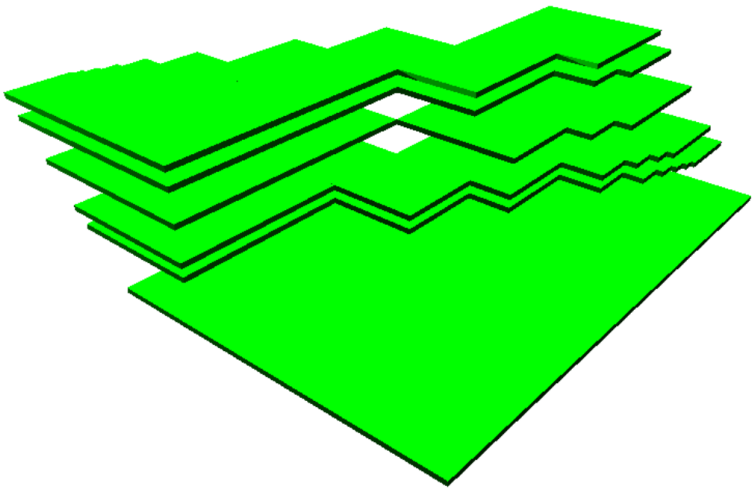}
\caption{Slices $X(abaab:p/q)$ for $p/q=2$, $9/4$, $7/3$, $5/2$, $8/3$, $11/4$.
The interior of the $5/2$ slice is disconnected at a slippery point.}\label{isobar_abaab_2_to_3}
\end{figure}

\section{Arbitrary words}\label{arbitrary_word_section}

\subsection{Semipositive words}

\begin{definition}
A word $w$ is {\em semipositive} if it either contains no $a^{-1}$ or no $b^{-1}$.
\end{definition}

Theorem~\ref{positive_rational} generalizes in a straightforward way to semipositive words.

\begin{theorem}\label{semipositive_rational}
Let $w$ be semipositive (without loss of generality, suppose it contains no $a^{-1}$).
If $r$ is rational, so is $R(w,r,s)$. Moreover, the denominator of $R(w,r,s)$ is
no bigger than the denominator of $r$.
\end{theorem}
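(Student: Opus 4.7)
The plan is to adapt the method of perturbation from Theorem~\ref{positive_rational}, exploiting the observation that the crucial monotonicity step there only requires $a$ to appear with positive exponents. Since $w$ contains no $a^{-1}$, we may perturb $a$ upward (to a pointwise larger map) without decreasing $\rotz(w)$, irrespective of any $b^{-1}$-letters in $w$. Crucially, we do not touch $b$, which keeps $b$ an honest homeomorphism, so that $b^{-1}$ continues to make sense in all compositions. We may assume $w$ contains at least one $a$; otherwise $w$ is a power of $b$, in which case $r$ plays no role and there is nothing to prove under the implicit interpretation that $w \ne b^k$.

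Start with a maximizing representation with $\rotz(w)=R(w,r,s)$ (which exists by Lemma~\ref{maximum_achieved}) where $r=p/q$, and let $\Sigma_x=\{x_0,\ldots,x_{q-1}\}$ be a finite $a$-orbit. Define the monotone map $\alpha^+$ by $\alpha^+(z)=a(x_{i+1})$ for $z\in(x_i,x_{i+1}]$ exactly as in the positive case, and let $w^+$ be obtained from $w$ by replacing every $a$ by $\alpha^+$, leaving $b$-letters (including $b^{-1}$) untouched. Because $w$ has no $a^{-1}$ and $\alpha^+\ge a$ pointwise, the same pointwise-monotonicity argument as in Theorem~\ref{positive_rational} gives $\rotz(w^+)\ge \rotz(w)$. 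The $\epsilon$-approximation argument from that proof then yields equality $\rotz(w^+)=\rotz(w)$: one approximates $\alpha^+$ from below by homeomorphisms $a'$ with $\rotz(a')=r$ agreeing with $a$ on $\Sigma_x$, and since $b$ is already a genuine homeomorphism (hence uniformly continuous), the ``initial point close to the orbit'' estimate carries through verbatim. Combined with the bound $\rotz(w')\le R(w,r,s)$ for the resulting homeomorphism representation, this forces $\rotz(w^+)=R(w,r,s)$.

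It remains to bound the denominator. Since $w$ is not a power of $b$, a cyclic conjugate of $w$ can be written so that its outermost (leftmost) composition factor is $a$; cyclic conjugation does not change rotation number, so we may assume this of $w$ itself. The corresponding $w^+$ then factors as $\alpha^+\circ(\textrm{remaining monotone maps})$, so its image lies in $\alpha^+(S^1)=\Sigma_x$. Thus $w^+$ is a monotone self-map of $S^1$ whose image has at most $q$ points, and hence its rotation number is rational with denominator dividing the period of a periodic orbit on $\Sigma_x$, in particular at most $q$. This yields $R(w,r,s)\in\Q$ with denominator at most $q$, as claimed.

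The main thing to verify is that the $\epsilon$-approximation step still gives equality $\rotz(w^+)=\rotz(w)$ when only $a$ is perturbed. In the original proof this equality relied on two ingredients: that $\alpha^+$ can be approximated (outside small neighborhoods of $\Sigma_x$) by homeomorphisms $a'$, and that the other generator is uniformly continuous so that orbit comparisons survive composition. Both remain true here, since $b$ has not been altered at all. The rest of the proof — exploiting the outermost $a$-letter of a cyclic conjugate to trap the image of $w^+$ inside $\Sigma_x$ — is insensitive to whether $b^{-1}$ appears, because the $b$-letters simply shuffle points without enlarging the image beyond what the final application of $\alpha^+$ produces.
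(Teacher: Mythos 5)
Your proposal is correct and follows essentially the same route as the paper: replace $w$ by a cyclic conjugate whose outermost composition factor is $a$, perturb only $a$ to the step map $\alpha^+$ supported on the finite orbit $\Sigma_x$ (using that the absence of $a^{-1}$ makes $\rotz(w)$ monotone under pointwise increase of $a$, while $b^{\pm 1}$ are untouched), and conclude that $w^+$ maps into $\Sigma_x$ and hence has a periodic orbit of period at most $q$. Your explicit handling of the degenerate case $w=b^k$ and of why the equality $\rotz(w^+)=\rotz(w)$ survives the presence of $b^{-1}$ is slightly more careful than the paper's one-line appeal to Theorem~\ref{positive_rational}, but the argument is the same.
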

\begin{proof}
After replacing $w$ by a cyclic conjugate, we can assume it ends with $a$.
Let $r=p_1/q_1$, and let $\Sigma_x=\cup_i x_i$ be a periodic orbit 
for $a$, so that $a(x_i)=x_{i+p_1}$.
Let $\alpha^+$ be defined by $\alpha^+(\theta)=a(x_{i+1})$ for
$\theta \in (x_i,x_{i+1}]$, and let $w^+$ be obtained by replacing $a$ with $\alpha^+$.
Then as in Theorem~\ref{positive_rational} $w$ and $w^+$ have the same rotation number,
whereas $w^+$ takes $\Sigma_x$ to itself, and therefore has a periodic orbit
with period $\le q_1$.
\end{proof}

\subsection{Rationality Conjectures}

In this section we state three conjectures on the rationality of $R$. These conjectures
are related, and we explain how Conjecture~\ref{rationality_conjecture} at least 
would follow if a certain dynamical problem 
(the {\em interval game}; see \S~\ref{interval_game_subsection}) always had a positive solution.

Unfortunately, there are instances of the interval game which are unwinnable (i.e.\/
unsolvable); however, it turns out that this dynamical problem {\em generically} 
has a positive solution, and that the exceptions must be quite special. 
This suggests a program to attack the rationality conjectures.

We adopt the following notational convention, which is consistent with our earlier use
for positive $w$:

\begin{definition}
For arbitrary $w$ and for real $r,s$, let $R(w,r-,s-)$ denote the supremum of
$\rotz(w)$ under all representations for which $a$ and $b$ are conjugate to
rotations $R_r$ and $R_s$ respectively.
\end{definition}

We would like to prove the following conjectures:

\begin{conjecture}\label{rigid_rationality_conjecture}
Let $w$ be arbitrary, and let $r,s\in\Q$. Then $R(w,r-,s-)\in\Q$.
\end{conjecture}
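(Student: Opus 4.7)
The plan is a centralization reduction to the positive case. The key observation is that if $a \in \homeo^+(S^1)^\sim$ is conjugate to $R_{p_1/q_1}$ and $b$ is conjugate to $R_{p_2/q_2}$ (writing $r = p_1/q_1$ and $s = p_2/q_2$, not necessarily in lowest terms), then $a^{q_1} = R_{p_1}$ and $b^{q_2} = R_{p_2}$ are integer translations, hence central in $\homeo^+(S^1)^\sim$. Consequently, within any such representation we have the identities $a^{-1} = a^{q_1-1} R_{-p_1}$ and $b^{-1} = b^{q_2-1} R_{-p_2}$.

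Given an arbitrary $w \in F$, let $k_1, k_2$ count the occurrences of $a^{-1}$ and $b^{-1}$ in $w$, and let $w'$ be the positive word obtained from $w$ by replacing each $a^{-1}$ with $a^{q_1-1}$ and each $b^{-1}$ with $b^{q_2-1}$. Using the identities above together with the centrality of integer translations to sweep the resulting factors $R_{\pm p_i}$ to one side, we find that for any representation $\rho$ with $\rho(a)$ conjugate to $R_r$ and $\rho(b)$ conjugate to $R_s$,
$$\rho(w) = \rho(w') \cdot R_N, \qquad N = -k_1 p_1 - k_2 p_2,$$
and therefore $\rotz(\rho(w)) = \rotz(\rho(w')) + N$. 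Taking suprema over such representations and invoking Proposition~\ref{approach_from_below} (which identifies these suprema as $R(w, r-, s-)$ and $R(w', r-, s-)$) gives
$$R(w, r-, s-) = R(w', r-, s-) + N.$$

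Since $w'$ is positive, $R(w', r-, s-)$ is rational: monotonicity (Lemma~\ref{nondecreasing}) together with the Rationality Theorem shows that for fixed rational $r = p_1/q_1$, the function $t \mapsto R(w', r, t)$ is monotone nondecreasing and takes only finitely many rational values with denominators bounded by $q_1$, so its left limit at any $s$ is one of those values; a symmetric argument handles the further left limit in the first variable, as recorded in the subsection on Limits and rotations. Since $N \in \Z$, we conclude $R(w, r-, s-) \in \Q$.

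There is no serious technical obstacle in this argument; it is essentially the centralization trick alluded to in the introduction. The genuine difficulty lies elsewhere: the parallel Conjecture~\ref{rationality_conjecture} for $R(w, r, s)$ (without the conjugacy hypothesis on $a$ and $b$) cannot be approached this way, because $a^{q_1}$ need not be central when $a$ merely has rotation number $p_1/q_1$, so the identities $a^{-1} = a^{q_1-1} R_{-p_1}$ fail and the reduction to a positive word collapses. It is precisely this failure that necessitates the interval-game machinery developed later in \S~\ref{arbitrary_word_section}.
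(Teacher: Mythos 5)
The statement you are addressing is labelled a \emph{conjecture}, and the paper does not prove it: the authors only establish that it reduces to the positive case (Proposition~\ref{reduce_to_positive}) and observe that it is implied by the (also open) Slippery Conjecture. Your reduction to a positive word $w'$ via centrality of $a^{q_1}$ and $b^{q_2}$ is exactly the paper's Proposition~\ref{reduce_to_positive}, and that part of your argument is correct.

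The gap is in your claim that $R(w',r-,s-)\in\Q$ for positive $w'$ follows from monotonicity, the Rationality Theorem, and a ``symmetric argument.'' The first left limit is fine: for fixed rational $r=p_1/q_1$, Lemma~\ref{one_rational} gives that $t\mapsto R(w',r,t)$ takes values in the fixed finite set of rationals in a bounded interval with denominator $\le q_1$, so $R(w',r,s-)$ lies in that set. But the second left limit, $R(w',r-,s-)=\lim_{r'\to r^-}R(w',r',s-)$, is not controlled the same way. For $r'$ rational of denominator $q_1'$, the value $R(w',r',s-)$ is rational with denominator bounded only by $q_1'$, and $q_1'\to\infty$ as $r'\to r^-$. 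A monotone limit of rationals with unbounded denominators has no reason to be rational, so the ``symmetric argument'' does not close the problem; reversing the order of the iterated limits runs into the identical obstruction with the roles of $q_1'$ and $q_2'$ exchanged. This is precisely the slippery-point phenomenon of \S~\ref{slippery_subsection}: at a slippery $(r,s)$, the value $R(w',r-,s-)$ strictly exceeds $R(w',r',s')$ for all $r'<r$, $s'<s$, and its rationality there is exactly what the (unproved) Slippery Conjecture would supply, via the proposition that it forces $R(w',r-,s-)=h_a(w')r+h_b(w')s$. Your write-up thus proves Proposition~\ref{reduce_to_positive} but does not prove Conjecture~\ref{rigid_rationality_conjecture}, which remains open in the paper.
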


\begin{conjecture}\label{rationality_conjecture}
Let $w$ be arbitrary, and let $r,s \in \Q$. Then $R(w,r,s)\in \Q$.
\end{conjecture}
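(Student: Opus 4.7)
\medskip

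The plan is to argue by contradiction, closely following the blueprint the authors sketch in the introduction. Suppose $R(w,r,s)=\theta$ were irrational for some $w$ and some rational $r=p_1/q_1$, $s=p_2/q_2$. By Lemma~\ref{maximum_achieved} this value is achieved by some representation $\rho$, and the rationality of $r$ and $s$ forces $\rho(a)$ and $\rho(b)$ to carry finite periodic orbits $\Sigma_x$, $\Sigma_y$ of periods dividing $q_1$ and $q_2$. Since $\rotz(\rho(w))=\theta\notin\Q$, the homeomorphism $\rho(w)$ has no periodic orbit, and by Poincaré's classification it is semi-conjugate to the rigid rotation $R_\theta$ on its minimal set. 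Rotation number is invariant under semi-conjugacy, so after collapsing wandering intervals we may assume $\rho(w)$ acts on its minimal set as $R_\theta$. The goal is to produce a perturbation $\rho'$ of $\rho$ which fixes $\rotz(a)$ and $\rotz(b)$ but strictly increases $\rotz(w)$, contradicting extremality.

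The perturbation will be localized near a short interval $I$ chosen so that a return of the $R_\theta$-orbit of $I^+$ to $I$ can be ``shortcut'' by inserting a small leftward push, thereby bumping $\rotz(w)$ up by a controlled amount. In order for this local modification to be realized as a genuine change of the factors $\rho(a)$ and $\rho(b)$ without altering their rotation numbers, the orbit of $I^+$ must, on its way back to $I$, avoid a finite list of ``enemy'' intervals $\varphi_j(I)$ built from the subwords of $w$ together with $\rho(a)^{\pm 1}$ and $\rho(b)^{\pm 1}$ acting near $\Sigma_x$, $\Sigma_y$. Finding $I$ and $n$ with $R_\theta^n(I^+)\in I$ but $R_\theta^i(I^+)\notin\varphi_j(I)$ for $0\le i<n$ is precisely the interval game described in the introduction.

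The construction of the perturbation from a winning position in the interval game should be routine: one locally blows up $\rho(w)$ near $I$, uses the fact that $I^+$'s orbit avoids each $\varphi_j(I)$ to redistribute the perturbation among single letters of $w$ on disjoint pieces of the circle, and invokes Proposition~\ref{elementary_properties_of_rot}(4) and Lemma~\ref{unique_lift} to restore the rotation numbers of $a$ and $b$ exactly. Given that the perturbation produced increases $\rotz(w)$ by a definite positive amount, this contradicts that $\rho$ achieved the supremum $R(w,r,s)$.

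The essential difficulty — and the reason this is posed only as a conjecture — is that the interval game is not always winnable: by Theorem~\ref{single_enemy_rotation}, when the enemies degenerate to rigid rotations there is a dense $G_\delta$ of pairs $(\theta,\phi)$ that the mover loses. So the hard step will be to rule out, or separately handle, the case where the extremal representation $\rho$ forces all the relevant enemies $\varphi_j$ to be conjugate to rigid rotations whose rotation angles, together with $\theta$, land in this exceptional $G_\delta$. Two natural avenues: (i) show that if all enemies are rigid rotations then the action factors through an extension of an abelian group, forcing $\rotz$ to be a homomorphism and hence $\theta\in\Q$ outright; or (ii) exploit the two-generator structure, using that $\Sigma_x$ and $\Sigma_y$ give independent rational constraints, to find a composite perturbation that evades the $G_\delta$ obstruction even when no single enemy can be defeated. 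Making either avenue rigorous appears to be the crux of the problem.
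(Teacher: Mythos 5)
The statement you are trying to prove is stated in the paper as Conjecture~\ref{rationality_conjecture}, and the paper offers no proof of it; it remains open. Your proposal is not a proof either --- and to your credit you say so explicitly in the final paragraph --- but it is worth recording that the program you outline is essentially the authors' own: take an extremal representation (Lemma~\ref{maximum_achieved}), assume $\rotz(w)$ irrational, and try to win the interval game of \S~\ref{interval_game_subsection} with enemies built from the suffixes of $w$ beginning with $a^{-1}$, constrained to the complement of a periodic orbit of $a$. The paper proves exactly the partial results you would need (Theorem~\ref{generic_smooth_enemies} for generic $C^1$ enemies, Theorems~\ref{single_enemy_nonrotation} and \ref{single_enemy_rotation} for a single enemy), and stops there.

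The concrete gap, which you correctly locate but do not close, is that the interval game has genuinely unwinnable instances, so the perturbation argument does not apply to every putative extremal representation with irrational $\rotz(w)$. Neither of your two proposed escape routes is sound as stated. For (i): even if every enemy $\varphi_j=w_j^{-1}$ is (semi-)conjugate to a rigid rotation, the representation need not factor through an amenable group --- the $w_j$ are specific elements, not generators of the image --- so you cannot conclude that $\rotz$ is a homomorphism. For (ii), nothing is proved. A smaller inaccuracy: by Theorem~\ref{single_enemy_rotation} the \emph{winning} set $U$ is open and dense, so the losing set is closed and nowhere dense, not a dense $G_\delta$ as you write; the dense $G_\delta$ in the paper's corollary is the set of values that $R(w,r,s)$ must \emph{avoid}, i.e.\ it sits inside the winning set. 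This does not rescue the argument --- a nonempty nowhere dense exceptional set is still an obstruction --- but the quantifiers matter if you intend to pursue route (ii). As it stands, your text is a faithful restatement of the authors' program together with an honest acknowledgment of the unresolved crux, not a proof.
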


\begin{conjecture}\label{rational_maximum_conjecture}
Let $w\in [F,F]$. Then $\max_{r,s} R(w,r,s) \in \Q$.
\end{conjecture}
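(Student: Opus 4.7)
The plan is to identify $\max_{r,s}R(w,r,s)$ with $2\,\scl_F(w)$ and then invoke the rationality of stable commutator length on free groups. Since $w\in [F,F]$ means $h_a(w)=h_b(w)=0$, Lemma~\ref{lattice_transform} shows $R(w,r,s)$ is $\Z^2$-periodic in $(r,s)$, so $\sup_{r,s}R(w,r,s)$ is really a supremum over the compact torus $\R^2/\Z^2$, and equivalently
$$\sup_{r,s}R(w,r,s)\;=\;\sup_\rho\,\rotz(\rho(w))$$
where the right-hand supremum ranges over all $\rho:F\to\homeo^+(S^1)^\sim$. A circular-order compactness argument in the spirit of Lemma~\ref{maximum_achieved} would show the supremum is attained by some $\rho_0$, whose rotation numbers $(r_0,s_0)$ then realize the maximum on the left.

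Next, I would sandwich this maximum by $2\,\scl_F(w)$ from both sides. The upper bound $\max_{r,s}R(w,r,s)\le 2\,\scl_F(w)$ is exactly the remark following Lemma~\ref{counting_inequality} specialized to $w\in [F,F]$; it uses that $\rotz$ is a homogeneous quasimorphism of defect at most $1$ together with generalized Bavard duality. For the matching lower bound I would apply Bavard duality inside $F$ itself to obtain a homogeneous quasimorphism $\phi$ on $F$ with $\phi(w)/(2D(\phi))=\scl_F(w)$; for $F$ free one may take $\phi$ to be a finite combination of Brooks counting quasimorphisms. Such a $\phi$, after normalizing its defect to $1$, can then be realized as $\rotz\circ\rho_0$ for some $\rho_0:F\to\homeo^+(S^1)^\sim$ by Ghys's bounded-cohomology construction of circle actions from $2$-cocycles, producing a representation with $\rotz(\rho_0(w))=2\,\scl_F(w)$ and closing the sandwich.

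The hard part will be the second half of the second step --- producing the extremal circle action realizing $\phi$ as a rotation quasimorphism of defect exactly $1$. One clean route is via Ghys's dictionary between homogeneous quasimorphisms of defect $1$ modulo coboundaries and semi-conjugacy classes of circle actions with nonzero bounded Euler class; a more geometric alternative, closer in spirit to Theorem~\ref{scl_rotation_formula}, would be to exhibit an extremal positive immersion in some $(q_1,q_2,\infty)$-orbifold (in the sense of \cite{Calegari_faces}) whose boundary realizes $w$, so that the induced $\PSL(2,\R)$-action is extremal. Once the identity $\max_{r,s}R(w,r,s)=2\,\scl_F(w)$ is in hand, the conjecture follows immediately from the rationality of $\scl_F$ proved in \cite{Calegari_rational}.
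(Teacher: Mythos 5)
This statement is not a theorem of the paper but an open conjecture (Conjecture~\ref{rational_maximum_conjecture}); the paper offers no proof, only a proposed line of attack via the method of perturbation and the interval game of \S~\ref{interval_game_subsection}, which is carried out only in special cases. So any complete ``proof'' should be treated with suspicion, and indeed yours has a genuine gap exactly where you flag ``the hard part.'' Your first step (rewriting $\max_{r,s}R(w,r,s)$ as $\sup_\rho \rotz(\rho(w))$, using $h_a(w)=h_b(w)=0$) and your upper bound $\sup_\rho\rotz(\rho(w))\le 2\,\scl_F(w)$ (Bavard duality plus the defect bound $D(\rotz\circ\rho)\le 1$) are both fine. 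The problem is the matching lower bound.

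The claim that an extremal homogeneous quasimorphism $\phi$ for $w$ --- e.g.\ a finite rational combination of Brooks counting quasimorphisms --- ``can then be realized as $\rotz\circ\rho_0$ by Ghys's bounded-cohomology construction'' is not a theorem, and is almost certainly false as stated. Ghys's dictionary goes the other way: it classifies circle actions up to semi-conjugacy by their bounded \emph{integral} Euler classes, and the Ghys--Matsumoto characterization says these are precisely the degree-two bounded classes representable by cocycles with values in $\lbrace 0,1\rbrace$. The set of homogeneous quasimorphisms of the form $\rotz\circ\rho$ is therefore a quite special (and poorly understood) subset of the defect-one sphere in $Q(F)/H^1(F)$, and there is no mechanism for promoting an arbitrary extremal $\phi$ with $D(\phi)=1$ to one in this subset. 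Your geometric alternative fails for the same reason: not every $\gamma_w$ virtually bounds a positively immersed surface in some $O(q_1,q_2)$ (the paper itself emphasizes that this property is subtle and rare, cf.\ the $a^{p_1}b^{p_2}$ example following Theorem~\ref{scl_rotation_formula}). In fact the identity $\sup_\rho\rotz(\rho(w))=2\,\scl_F(w)$ for all $w\in[F,F]$ is itself an open problem at least as hard as the conjecture; it holds for $w=[a,b]$ and for words bounding extremal positive immersions, but is only an inequality in general. To make your argument work you would need to prove that every $w\in[F,F]$ admits an extremal \emph{rotation} quasimorphism, which is not available.
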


Although on the surface, 
Conjectures~\ref{rigid_rationality_conjecture} and \ref{rationality_conjecture}
seem very similar, the former very quickly reduces to the case of positive $w$:

\begin{proposition}\label{reduce_to_positive}
Conjecture~\ref{rigid_rationality_conjecture} is true if it is true for all positive
$w$.
\end{proposition}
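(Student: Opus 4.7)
The plan is to exploit the rigidity of the rotation condition: if $\rho(a)$ is conjugate to $R_{p_1/q_1}$ in $\homeo^+(S^1)^\sim$ and we write $r=p_1/q_1$, then $\rho(a)^{q_1}$ is conjugate to $R_{p_1}$, and since $R_{p_1}$ is central in $\homeo^+(S^1)^\sim$, we actually have an \emph{equality} $\rho(a)^{q_1}=T^{p_1}$, where $T:=R_1$ generates the center. Similarly $\rho(b)^{q_2}=T^{p_2}$. This allows one to replace negative powers of $a,b$ by large positive ones at the cost of central factors.

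Concretely, given $w\in F$ and $r=p_1/q_1$, $s=p_2/q_2$, form the positive word $w'\in F$ by substituting $a^{q_1-1}$ for every occurrence of $a^{-1}$ and $b^{q_2-1}$ for every occurrence of $b^{-1}$. Let $N_a,N_b$ be the numbers of $a^{-1},b^{-1}$ appearing in $w$, and set $N:=-(N_a p_1+N_b p_2)$. For any $\rho$ with $\rho(a)$ conjugate to $R_r$ and $\rho(b)$ conjugate to $R_s$, the identities
\begin{equation*}
\rho(a)^{-1}=\rho(a)^{q_1-1}\cdot T^{-p_1},\qquad \rho(b)^{-1}=\rho(b)^{q_2-1}\cdot T^{-p_2}
\end{equation*}
hold in the image. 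Substituting these into $\rho(w)$ letter-by-letter and using centrality of $T$ to collect all central factors on the left yields the identity
\begin{equation*}
\rho(w)=T^{N}\,\rho(w')
\end{equation*}
in $\homeo^+(S^1)^\sim$.

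Since $T$ is translation by $1$, we have $\rotz(T^N\varphi)=N+\rotz(\varphi)$ for every $\varphi$ (either directly from the definition of $\rotz$, or from bullet~(3) of Proposition~\ref{elementary_properties_of_rot} applied to the abelian group $\langle T,\varphi\rangle$). Hence $\rotz(\rho(w))=N+\rotz(\rho(w'))$ for every admissible $\rho$. The set of $\rho$ under which the supremum defining $R(\cdot,r-,s-)$ is taken is the same for $w$ and for $w'$ (it depends only on $r,s$, not on the word), so taking suprema gives
\begin{equation*}
R(w,r-,s-)=N+R(w',r-,s-).
\end{equation*}
Since $w'$ is positive, the hypothesis asserts $R(w',r-,s-)\in\Q$, and therefore $R(w,r-,s-)\in\Q$.

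There is no real obstacle here; the argument is essentially an algebraic identity in the central extension. The one subtlety worth verifying is that the substitution really does produce a bona fide identity on the level of $\homeo^+(S^1)^\sim$, which requires precisely the rigidity that $\rho(a)^{q_1}$ and $\rho(b)^{q_2}$ are \emph{equal} (not merely conjugate) to the central elements $T^{p_1}$ and $T^{p_2}$. This is exactly why the reduction works for $R(w,r-,s-)$ but cannot be carried out in the same simple way for $R(w,r,s)$---the latter only forces $\rho(a)^{q_1}$ to have rotation number $p_1$, not to equal $T^{p_1}$ on the nose, so Conjecture~\ref{rationality_conjecture} does not reduce to the positive case by this trick.
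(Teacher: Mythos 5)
Your proof is correct and is essentially the paper's own argument: the same trick of using $\rho(a)^{q_1}=T^{p_1}$ and $\rho(b)^{q_2}=T^{p_2}$ (equality, by centrality of integer translations) to rewrite $w$ as a positive word times a central factor, giving $R(w,r-,s-)=N+R(w',r-,s-)$. Your version just spells out the bookkeeping of the central powers more explicitly than the paper does.
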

\begin{proof}
Actually, the proof is a trick. Let $r=p_1/q_1$ and $s=p_2/q_2$.
Then $a^{q_1}$ is conjugate to the central element $R_{p_1}$,  
and similarly for $b^{q_2}$. It follows that for any factorization
$w=w_1w_2$ we have $w=w_1w_2 = w_1a^{q_1}w_2z^{-p_1}$, and therefore we can write
$w=w'z^{-N}$ for some sufficiently large integer $N$,
where $w'$ is positive, and $R(w,p_1/q_1-,p_2/q_2-) = R(w',p_1/q_1-,p_2/q_2-)-N$.
\end{proof}

It follows from Proposition~\ref{reduce_to_positive} that $R(w,r-,s-)$ 
can be computed, at least for $r,s\in\Q$, by the
method of \S~\ref{immersion_subsection}. 

There are at least two classes of $(r,s)$ for which $R(w,r-,s-)$ is known to be rational:
\begin{enumerate}
\item{at values of $(r,s)$ where $R(w,\cdot,\cdot)$ is
locally constant, we have $R(w,r-,s-)=R(w,r,s)\in\Q$; and}
\item{at values of $(r,s)$ where the upper bound in Theorem~\ref{scl_rotation_formula} is realized,
we have $R(w,r-,s-)\in\Q$.}
\end{enumerate}
It is nevertheless true that $R(w,r-,s-)$ 
can be quite complicated. 

\begin{example}\label{commutator_rotation}
For any $r,s$ (not necessarily rational) we have $R(aba^{-1}b^{-1},r-,s-)=0$. To see this, observe
that $ba^{-1}b^{-1}$ is conjugate to $R_{-r}$, and therefore there is some
point $p$ for which $ba^{-1}b^{-1}(p)=p-r$; but then $p$ is fixed by $aba^{-1}b^{-1}$, 
which therefore has rotation number $0$.
\end{example}

\begin{example}\label{commutator_all}
We now discuss $R(aba^{-1}b^{-1},r,s)$, as an interesting counterpoint 
to Example~\ref{commutator_rotation}. First of all, we claim that $R(aba^{-1}b^{-1},r,s)=0$
whenever $r$ or $s$ is irrational. We argue analogously to the case of Example~\ref{commutator_rotation}:
suppose $r$ is irrational, and let $\mu$ be an invariant probability measure for $a$
supported on an exceptional minimal set. If there is an interval $I$ with $\mu(I)=r$ and
$\mu(b^{-1}(I))<r$ then $aba^{-1}b^{-1}(I^-)<I^-$, so $\rotz(aba^{-1}b^{-1})\le 0$.
But if $\mu(b^{-1}(I))\ge \mu(I)$ for every interval $I$ with $\mu(I)=r$ then $b$ 
actually preserves $\mu$, and therefore the action is semiconjugate to a linear action, and $\rotz(aba^{-1}b^{-1})=0$.

If $r=p/q$ is rational, then $a(\cdot)$ has a periodic orbit
$x_1,x_2,\cdots,x_q$ with indices corresponding to the cyclic order, and
$a(x_i)=x_{i+p}$. If for some $i$, 
$x_j< b^{-1}(x_i) < x_{j+1}$ and $x_k < b^{-1}(x_{i-p}) < x_{k+1}$
with $k\le j-p-1$ then $aba^{-1}b^{-1}(x_i) < x_i$ and $\rotz(aba^{-1}b^{-1})\le 0$.
Otherwise we must have $k=j-p$, and $aba^{-1}b^{-1}(x_i) < x_{i+1}$, so that
$\rotz(aba^{-1}b^{-1})\le 1/q$. If $s=p'/q$ we can build an action which is a $q$-fold
cyclic cover of an action for which both $a$ and $b$ have fixed points; this shows
that $R(aba^{-1}b^{-1},p/q,p'/q)\ge 1/q$ and therefore $R(aba^{-1}b^{-1},p/q,p'/q)=1/q$.

Compare with \cite{Thurston_circles}, Remark~3.8.
\end{example}

\begin{figure}[htpb]
\labellist
\small\hair 2pt
\endlabellist
\centering
\includegraphics[scale=0.2]{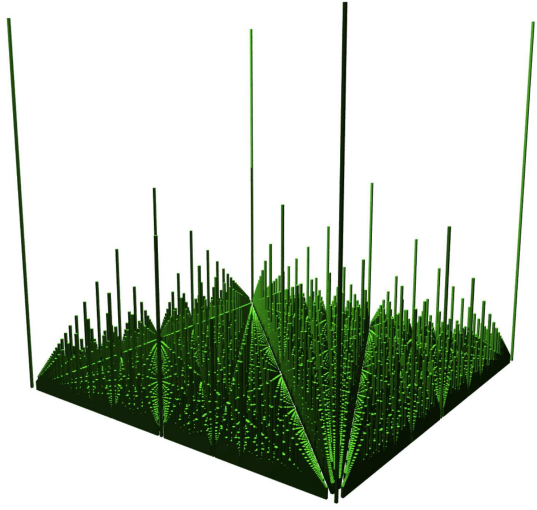}
\caption{The ``ziggurat'' for $aba^{-1}b^{-1}$.}\label{abAB_ziggurat}
\end{figure}

Note that Conjecture~\ref{rigid_rationality_conjecture} can only fail for some
$w,r,s$ if $(r,s)$ is slippery for $w'$ (with notation from the proof of 
Proposition~\ref{reduce_to_positive}). Therefore Conjecture~\ref{rigid_rationality_conjecture}
is implied by the Slippery Conjecture.

\subsection{The Interval Game}\label{interval_game_subsection}

We pursue the following strategy to attack Conjecture~\ref{rational_maximum_conjecture},
based more or less on the method of perturbation. This leads to a 
dynamical problem that we call {\em the interval game}. The structure of the set of 
``losing games'' is very interesting, even when restricted to a very simple class of games
(e.g. consisting entirely of rigid rotations). 

The strategy is as follows. Given a word $w$, and given $r,s\in\Q$ we
suppose that we have a representation for which $a$ and $b$ have the prescribed rotation
numbers, and for which $\rotz(w)$ is maximal. Suppose that $\rotz(w)$ is irrational.
We would like to adjust $a$ to a new map $a'$ (with the same rotation number as $a$)
which will adjust $w$ to a new $w'$ that
has a strictly bigger rational rotation number. If we could do this, we would
obtain a contradiction, and therefore conclude that $R(w,r,s)$ was rational after all.

We look for a suitable interval $I\subset S^1$ and require $a'$ to agree 
with $a$ outside $I$, but $a' > a$ on $I$. Providing $I$ can be chosen 
in the complement of a periodic orbit, $a$ and $a'$ will have the same rotation number. 
The problem is that {\em increasing} $a$
on $I$ will {\em decrease} $a^{-1}$ on $a(I)$, and it is not clear if we can find
an adjustment for which the net effect on $w$ is to increase its rotation number.

We abstract the situation in terms of a (one-player) game as follows. 

\begin{definition}
An {\em interval game} consists of a finite collection of orientation-preserving
homeomorphisms $\varphi_1,\cdots,\varphi_m$ (the {\em enemies})
and $\psi$. An interval $I\subset S^1$ 
{\em wins} if there is some positive integer $n$ so that $\psi^i(I)$ is disjoint from
$\varphi_j(I)$ for all $j$ and for $0\le i \le n$, and if $\psi^n(I^+)$ is contained in the
interior of $I$, where $I^+$ denotes the rightmost point of $I$. 
The interval {\em loses} otherwise.

An interval game is {\em constrained} by a finite partition $J$ of $S^1$ if the interval
$I$ must be chosen subject to being entirely contained in one of the intervals of $J$.
\end{definition}

After conjugating $w$, we assume that $w$ ends with $a$, and we 
let $w_1,w_2,\cdots,w_k$ be the (finitely many) suffixes of $w$ that begin with $a^{-1}$
(we take suffixes instead of prefixes because our group acts on the left).

\begin{proposition}
Suppose the interval game has a winning interval, for $\varphi_i = w_i^{-1}$ and $\psi=w$,
constrained relative to the partition consisting of intervals complementary to a finite
orbit for $a$, and suppose $\rotz(w)$ is irrational. Then $R(w,r,s)>\rotz(w)$.
\end{proposition}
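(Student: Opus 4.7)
The plan is to apply the method of perturbation: starting from a representation achieving $\rotz(w)$ with $\rotz(a)=r$ and $\rotz(b)=s$, I will modify $a$ to a new homeomorphism $a'$ (leaving $b$ unchanged and preserving $\rotz(a')=r$) so that the modified word $w'$ satisfies $\rotz(w')>\rotz(w)$. This modified representation will witness $R(w,r,s)\ge\rotz(w')>\rotz(w)$.

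First, I translate the winning data into the lift. Let $[\alpha,\beta]\subset\R$ be a lift of $I$ with $\beta$ a lift of $I^+$. The winning condition $w^n(I^+)\in\text{int}(I)$ lifts to $\til{w}^n(\beta)\in(\alpha+k,\beta+k)$ for some integer $k$. Since $\rotz(w)$ is irrational, $\til{w}^n$ has no fixed point modulo integer translations; combined with $\til{w}^n(\beta)<\beta+k$, the intermediate value theorem extends this to $\til{w}^n(p)<p+k$ for all $p$, so $\rotz(w)<k/n$. It therefore suffices to produce $a'$ with $\til{w'}^n(\beta)\ge\beta+k$, for then $\rotz(w'^n)\ge k$ and hence $\rotz(w')\ge k/n>\rotz(w)$.

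To construct $a'$, let $C$ be the complementary interval of $\Sigma_x$ containing $I$. I take $a'$ to agree with $a$ on $S^1\setminus C$, with $a'\ge a$ pointwise on $C$ and $a'>a$ on a neighborhood $I'\subseteq C$ of $I$ with $I^+\in\text{int}(I')$. Since $a'=a$ on $\Sigma_x$, we have $\rotz(a')=r$. The combinatorial input from the game is the following reinterpretation: writing each suffix $w_\ell=a^{-1}u_\ell$ starting with $a^{-1}$, the disjointness $w^i(I)\cap w_\ell^{-1}(I)=\emptyset$ is equivalent to $u_\ell(w^i(I))\cap J=\emptyset$, where $J:=a(I)$. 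Dynamically this says that whenever we compute $w(y)$ for $y\in w^i(I)$, the intermediate value $u_\ell(y)$ just before each $a^{-1}$ step avoids the ``error set'' $J$ on which $a'^{-1}$ differs from $a^{-1}$. Writing $w=g_1\cdots g_L$ (applied right-to-left) and letting $p_k(y),p_k'(y)$ denote the intermediate states computed with $w$ and $w'$ after applying $g_L,\ldots,g_k$, induction on decreasing $k$ yields $p_k'(y)\ge p_k(y)$: monotonicity at $b^{\pm 1}$; $a'\ge a$ at $a$-steps; and at $a^{-1}$-steps the disjointness (combined with a uniform-continuity bound, valid for sufficiently small perturbations) ensures $p_{k+1}'(y)\notin J$, so $a'^{-1}$ agrees with $a^{-1}$ there. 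Since $w$ ends with $a$, the first step at $y=I^+$ is strict because $I^+\in\text{int}(I')$ gives $a'(I^+)>a(I^+)$; strict monotonicity of the remaining letters propagates this, so $\til{w'}^i(\beta)>\til{w}^i(\beta)$ for all $i\ge 1$.

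The \emph{main obstacle} is to push $\til{w'}^n(\beta)$ all the way across the threshold $\beta+k$. I would consider a continuous family $a_t$, $t\in[0,T]$, with $a_0=a$ and $a_t$ monotonically enlarging the perturbation on $I'$ (e.g.\ so that $a_T(I^+)$ approaches $a(C^+)$). For $t$ near $0$ the disjointness persists by openness, and on that regime $\til{w_t}^n(\beta)$ is continuous and strictly increasing in $t$ by the analysis above. The hard part is to argue that $\til{w_t}^n(\beta)$ actually reaches $\beta+k$: either it does so before the disjointness first fails, or at the first moment $t_0$ of failure a coincidence $u_\ell(w_{t_0}^i(z))\in J$ produces an ``early closure'' --- a periodic orbit of $w_{t_0}$ of period strictly less than $n$ --- whose rotation number one then verifies strictly exceeds $\rotz(w)$. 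Carefully balancing continuity, openness of the disjointness, and combinatorial slack along the family is the technical core of the argument.
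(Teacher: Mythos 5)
Your setup is right (perturb $a$ only on the prescribed complementary interval so that $\rotz(a')=r$ is preserved; reinterpret the disjointness condition as saying the intermediate states $u_\ell(w^i(I))$ avoid $J=a(I)$, so the $a^{-1}$-steps are unaffected), and your reduction to producing $\til{w'}^n(\beta)\ge\beta+k$ is valid. But the step you flag as ``the technical core'' is precisely the part of the argument you are missing, and the route you propose for it --- a small strict perturbation at $I^+$ pushed along a continuous family until the orbit of $\beta$ crosses the threshold $\beta+k$ --- is not how the winning condition is used and does not obviously close. There are two concrete problems. First, your $a'$ is strictly larger than $a$ on a neighborhood $I'$ of $I$ with $I^+\in\mathrm{int}(I')$, hence differs from $a$ outside $I$; then $(a')^{-1}$ differs from $a^{-1}$ outside $J=a(I)$, and the winning condition only protects $J$, so your induction at the $a^{-1}$-steps needs the perturbation to be small --- but then the total gain $\til{w'}^n(\beta)-\til{w}^n(\beta)$ is small, while the deficit $\beta+k-\til{w}^n(\beta)$ is a fixed positive number. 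This tension is exactly what your family argument cannot resolve, and the ``early closure'' alternative at the first failure time is not justified.

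The idea you are missing is that the gain is made in one step at the point $w^n(I^+)\in\mathrm{int}(I)$, not incrementally at $I^+$. Take $a'$ supported in $I$ and collapsing $I$ rightward onto $a(I^+)$ (i.e.\ $a'(x)$ close to $a(I^+)$ for all $x\in\mathrm{int}(I)$; in the limit this is a monotone map as in the proof of Theorem~\ref{positive_rational}). Then the perturbed intermediate states of the forward orbit of $I^+$ are \emph{exactly} the unperturbed intermediate states of the orbit of $I^+$ (not merely close to them), so the disjointness hypothesis applies verbatim at every $a^{-1}$-step with no smallness requirement; the other $a$-steps can only push the orbit further forward. The orbit $w(I^+)\to w^2(I^+)\to\cdots\to w^n(I^+)$ then closes up, because $w^n(I^+)\in\mathrm{int}(I)$ is snapped forward to $a(I^+)$ by $a'$ and hence returns to $w(I^+)$. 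This produces a genuine periodic orbit of period $n$ for $w'$ with rotation number exactly $k/n$, where $k$ is your integer with $\til w^n(\beta)\in(\alpha+k,\beta+k)$; the inequality $k/n>\rotz(w)$ is automatic from irrationality, with no threshold-crossing argument needed. (This is what the paper's mapping-torus/foliation picture encodes: only the subcylinders carrying $a$ and $a^{-1}$ are modified, and the modification at $a^{-1}$-subcylinders is invisible along the relevant orbit.)
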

\begin{proof}
Let $I$ be a winning interval. We adjust $a$ to $a'$ on $I$, and consider the dynamics of
a point in $I$ under powers of $w'$. We can build a foliation encoding the dynamics on
the mapping torus of $w$ as follows. Let $C=\cup C_i$ be a cylinder with a vertical product foliation,
decomposed into subcylinders each of which represents the dynamics of one letter of $w$.
As we read $w$ from right to left, the subcylinders from bottom to top represent the dynamics
of each successive letter. Then the top of $C$ is glued to the bottom by $w$.

Adjusting $a$ to $a'$ on $I$ can be realized by adjusting the foliation in each subcylinder
associated to an $a$ or $a^{-1}$ in $w$; see Figure~\ref{adjust_foliation}. The figure
shows the altered dynamics on an $a$-subcylinder and an $a^{-1}$-subcylinder. 

\begin{figure}[htpb]
\labellist
\small\hair 2pt
\pinlabel $a$ at -20 40
\pinlabel $a^{-1}$ at -20 160
\pinlabel $w_i$ at 320 100
\pinlabel $I$ at 105 5
\pinlabel $w_i^{-1}(I)$ at 197 5
\pinlabel $\underbrace{\quad \quad \quad}$ at 105 15
\pinlabel $\underbrace{\quad \quad \quad}$ at 197 15
\endlabellist
\centering
\includegraphics[scale=1]{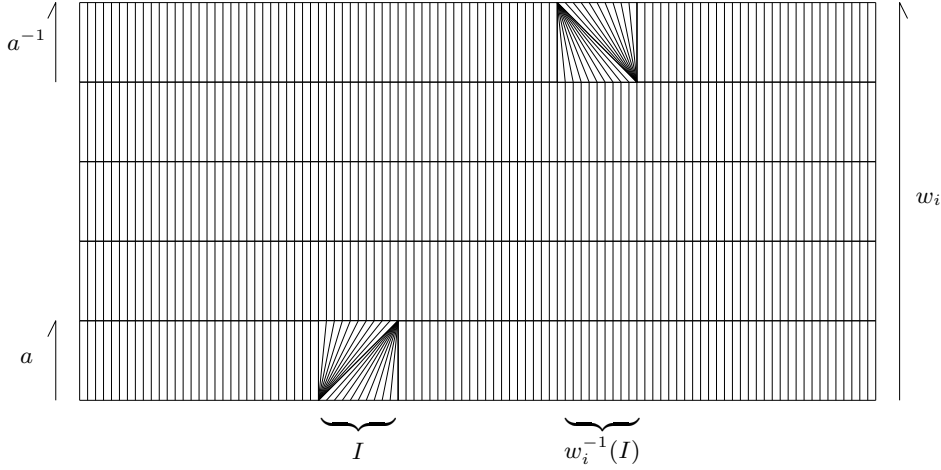}
\caption{Adjusting the dynamics of $a$ on $I$.}\label{adjust_foliation}
\end{figure}

Ignoring all $a$-subcylinders except the first (associated to the terminal $a$ of $w$) 
for the moment, and under the
hypothesis that $I$ is winning, we see that the future itinerary of any
point in $I$ under powers of $w'$ 
is the periodic orbit $w(I^+), w^2(I^+),\cdots, w^n(I^+), w(I^+)$;
i.e.\/ $w'$ has rational rotation number, which is therefore {\em strictly} 
greater than that of $w$.
Taking into account the adjusted dynamics at other $a$-subcylinders can only further
increase the rotation number of $w'$. This completes the proof.
\end{proof}

It is therefore important to understand precisely which interval games have 
a winning interval. First, we recall for the benefit of the reader, some elementary
facts about the orbit of a point under successive powers of an irrational rotation.
Fix some $\theta \in (0,1)$, and a rigid rotation $R_\theta$ through $\theta$.
The case that $\theta\in\Q$ is completely straightforward, so suppose $\theta$ is
irrational. We express $\theta$ as a continued fraction
$$\theta = \frac {1} {a_1 +} \frac {1} {a_2 +} \frac {1} {a_3 +} \cdots$$
Define $\theta_i$ recursively by $\theta_0=\theta$, $\theta_1 = 1-a_1\theta_0$ and
$\theta_{k+1} = \theta_{k-1} - a_{k+1}\theta_k$ for positive $k$.
Fix $r\in S^1$, and let $r_i:=R_\theta^i(r)$ denote the forward orbit. 

The following lemma is straightforward (see e.g.\/ \cite{de_Melo_van_Strien} pp. 26--30):
\begin{lemma}\label{closest_approach_lemma}
Let $r_{j_i}$ for $i=1,2,\cdots$ be the sequence of successively
closest approaches to $r$; i.e.\/ satisfying $|r-r_{j_i}|<|r-r_k|$ for $k<j_i$.
Then the $r_{j_i}$ alternately approach $r$ from the left (if $i$ is odd)
and the right (if $i$ is even), and $|r-r_{j_i}|=\theta_i$.

Moreover, if $r_a,r_b$ are adjacent elements of $\lbrace r_i\; | \; 0\le i\le n \rbrace$ 
with $a<b$ then $|r_a - r_b| = |r - r_{b-a}|$; it follows that there are infinitely
many odd $i$ for which $|r-r_{j_i}| < |r_a-r_b|$ for all adjacent $r_a,r_b$
with $a,b\ne 0$.
\end{lemma}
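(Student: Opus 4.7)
The plan is to extract all three assertions from the translation-invariance identity $|r_a - r_b| = |r - r_{b-a}|$, with continued-fraction convergents providing the quantitative bookkeeping. The identity itself is immediate: $R_\theta$ is an isometry of $S^1$, so $|r_a - r_b| = |R_\theta^{-a}(r_a) - R_\theta^{-a}(r_b)| = |r - r_{b-a}|$. I would record this first and then use it repeatedly.

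For the main claim, I would identify the indices $j_i$ with the denominators $q_i$ of the continued-fraction convergents $p_i/q_i$ of $\theta$, where $q_1 = a_1$, $q_2 = a_2 a_1 + 1$, and in general $q_{k+1} = a_{k+1}q_k + q_{k-1}$. A short algebraic check shows that the numbers $|q_k \theta - p_k|$ satisfy the same recursion as the $\theta_k$ (because $q_k \theta - p_k$ alternates sign with $k$, and $q_{k+1}\theta - p_{k+1} = -(a_{k+1}(q_k\theta - p_k)) + (q_{k-1}\theta - p_{k-1})$) with the same initial data $\theta_0 = \theta$ and $\theta_1 = 1 - a_1\theta$. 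The classical best-approximation property of convergents says that no integer $q$ with $0 < q < q_{i+1}$ produces a value $|q\theta - p|$ smaller than $|q_i \theta - p_i|$; translating this via $|r - r_j| = \|j\theta\|$ (distance to nearest integer) gives exactly that the successive closest approaches are the $r_{q_i}$ and that $|r - r_{q_i}| = \theta_i$. The alternation of sides is recorded by the sign $(-1)^i$ of $q_i\theta - p_i$, which propagates through the recursion.

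For the moreover clause, the first statement $|r_a - r_b| = |r - r_{b-a}|$ for adjacent elements is just the translation-invariance identity, applied to two orbit points that happen to be consecutive in the cyclic order. For the final existence statement, observe that any gap between non-base-point adjacent elements of $\{r_i : 0 \le i \le n\}$ equals $|r - r_k|$ for some $0 < k \le n$, so the minimum such gap is bounded below by a positive constant $\delta_n$ depending only on $n$. Since $\theta$ is irrational, $\theta_i \to 0$, and the first part of the lemma guarantees that the values $|r - r_{j_i}| = \theta_i$ corresponding to odd $i$ form an infinite subsequence tending to $0$; thus for all sufficiently large odd $i$, $|r - r_{j_i}| < \delta_n$, giving the claim.

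The main obstacle is the identification $j_i = q_i$: one must rule out the possibility that some intermediate orbit point $r_k$ with $q_i < k < q_{i+1}$ beats $r_{q_i}$ as a closest approach. This is precisely where the best-approximation theorem for convergents is essential; once it is in hand, both the side-alternation and the recursion $|r - r_{j_i}| = \theta_i$ fall out, and the remainder of the proof is elementary bookkeeping with the translation-invariance identity.
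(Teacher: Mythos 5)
Your proof is correct. The paper does not actually prove this lemma---it simply cites de Melo--van Strien pp.~26--30---and your argument is precisely the standard continued-fraction proof that citation points to: identify the closest-return times $j_i$ with the convergent denominators $q_i$ via the best-approximation theorem, check that $|q_i\theta-p_i|$ satisfies the recursion defining $\theta_i$ with alternating signs (giving the left/right alternation), and deduce the ``moreover'' clause from the isometry identity $|r_a-r_b|=|r-r_{b-a}|$ together with $\theta_i\to 0$.
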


\begin{definition}
We say $\theta$ is {\em well approximated from the left} if there are a sequence of
odd $i$ for which $|\theta_i|/|\theta_{i-1}| \to 0$.
\end{definition}

The set of $\theta \in S^1$ that are well approximated from the left has full measure; this
is elementary, and follows e.g. by the kinds of estimates proved in \cite{Herman}.

The following theorem says that {\em generic} interval games (in a suitable sense)
have a winning interval.

\begin{theorem}\label{generic_smooth_enemies}
Consider the interval game associated to a collection
$\varphi_1,\cdots,\varphi_m$ of $C^1$ diffeomorphisms, and $\psi$ 
a rigid rotation through angle $\theta$, where $\theta$ is irrational and
well approximated from the left.

Suppose there is a point $p\in S^1$ at which the derivatives of the $\varphi_i$ are all
different from $1$. Then there is a winning interval $I$ contained in any subinterval $J$
sufficiently close to $p$.
\end{theorem}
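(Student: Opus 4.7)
The plan is a first-moment measure argument over the one-parameter family of candidate intervals $I(t) := [t-\delta, t]$ with $t$ ranging in $J$. Using the well-approximation hypothesis, pick a large odd index $i$, and set $n := j_i$ and $\delta := \theta_i + \epsilon$ for some small $\epsilon > 0$. By Lemma~\ref{closest_approach_lemma} we have $\psi^n(t) = t - \theta_i$, so $\psi^n(I(t)^+) = t - \theta_i \in (t-\delta, t) = \textnormal{int}(I(t))$ for every $t \in J$; in particular the ``landing'' condition of the interval game is satisfied automatically, regardless of the other constraints.

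The substantive step is to bound, for each $k \in \{0,1,\ldots,n\}$ and each enemy $\varphi_j$, the set $B_{k,j} \subset J$ of parameters $t$ for which $\psi^k(I(t)) \cap \varphi_j(I(t)) \ne \emptyset$. Since $\psi^k(I(t)) = [t+k\theta - \delta, t+k\theta]$ and $\varphi_j(I(t))$ agrees to first order with $[\varphi_j(t) - \delta\varphi_j'(t), \varphi_j(t)]$ by $C^1$ control, nonempty intersection forces $|t+k\theta - \varphi_j(t)| \le C\delta$ for a uniform constant $C$. The function $g_{k,j}(t) := t + k\theta - \varphi_j(t)$ has derivative $1 - \varphi_j'(t)$, which is bounded away from zero on a small neighborhood of $p$ by hypothesis, so each nonempty $B_{k,j}$ is contained in an interval of length at most $C'\delta/|\varphi_j'(p)-1|$.

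Next, count the $k$ for which $B_{k,j}$ is actually nonempty: this happens precisely when $k\theta \pmod 1$ falls within a window of size $O(|J|\cdot|\varphi_j'(p)-1|)$ around $\varphi_j(p)-p$. By equidistribution, or quantitatively by the three-gap structure encoded in Lemma~\ref{closest_approach_lemma}, the number of such $k$ in $\{0,\ldots,n\}$ is $O\bigl(n|J|\cdot|\varphi_j'(p)-1| + 1\bigr)$. Multiplying by the bound on each $|B_{k,j}|$ and summing over $j$, the total bad measure is $O(mn\delta|J|) + O(m\delta / \min_j|\varphi_j'(p)-1|)$, where $m$ is the number of enemies.

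The main obstacle is to force this total strictly below $|J|$. The continued-fraction identity $j_i\theta_{i-1} + j_{i-1}\theta_i = 1$, implicit in the recursive definition preceding Lemma~\ref{closest_approach_lemma}, gives $n\delta \approx j_i\theta_i \le \theta_i/\theta_{i-1}$, and well-approximation from the left asserts precisely that this ratio tends to zero along a subsequence of odd $i$. Taking $i$ in that subsequence makes $mn\delta$ arbitrarily small, so the winning $t$ fill a set of positive measure in $J$, producing the required $I \subset J$. The only subtle case is $k=0$ when some $\varphi_j$ fixes $p$, so that $\varphi_j(p)-p = 0$ is not ``generic'' relative to the orbit of $p$; but the derivative hypothesis $\varphi_j'(p)\ne 1$ still forces $g_{0,j}$ to be strictly monotonic through $0$, so $B_{0,j}$ remains an interval of length $O(\delta)$ and is absorbed into the same bound.
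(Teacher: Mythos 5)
Your proof is correct and follows essentially the same strategy as the paper's: a measure/pigeonhole argument over the one-parameter family of candidate intervals of length $\approx\theta_i$ anchored near $p$, with the derivative condition $\varphi_j'(p)\ne 1$ bounding each bad parameter set by $O(\delta/|\varphi_j'(p)-1|)$, the gap structure of Lemma~\ref{closest_approach_lemma} counting how many iterates can interfere, and well-approximation from the left making $n\delta\approx j_i\theta_i\le\theta_i/\theta_{i-1}$ small. Your version is somewhat more explicit than the paper's (in particular about the $k=0$ case and about enlarging $\delta$ slightly so that $\psi^n(I^+)$ lands in the open interior), but the underlying idea is identical.
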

\begin{proof}
Since the $\varphi_i$ are all $C^1$, and
since their derivatives at $p$ are all different from $1$, we can find an interval
$K$ contained in $J$ so that the $\varphi_i'$ are almost constant
and bounded away from $1$ on $K$. Intuitively, as we adjust the position of $q$ 
near $p$, the $\varphi_i(q)$ move almost linearly at speeds bounded away from $1$. We
can therefore adjust $q$ to a location near $p$ for which none of the $\varphi_i(q)$ are
too close to some $R_\theta^i(q)$.

Let $n$ be very large, and such that $R_\theta^n(p)$ approximates $p$ from the left very well.
We fix some very small $\epsilon$, and require that
$$|p-R_\theta^n(p)| \le \epsilon |R_\theta^i(p)-R_\theta^j(p)|$$
for all $0<i,j<n$. Furthermore, for all $0<i<n$ there should be some $0<j<n$ not equal to $i$
with $|R_\theta^i(p)-R_\theta^j(p)| \ll |K|$.

Because the $\varphi_i'$ are bounded above and below and away from $1$, for each $i$
the set of $q \in K$ for which
$|\varphi_i(q)-R_\theta^j(q)|\le \max_{k\in K}|\varphi_i'(k)|\cdot|q-R_\theta^n(q)|$ for some $j$ has measure of order
$\epsilon|K|$. Choosing $\epsilon \ll 1/m$ we can find some $q$ for which
$|\varphi_i(q)-R_\theta^j(q)|> \max_{k\in K}|\varphi_i'(k)|\cdot|q-R_\theta^n(q)|$ for all $i$ and all $0<j<n$. Then
$[R_\theta^n(q),q]$ is a winning interval.
\end{proof}

A complete analysis of the interval game seems possible but difficult; however, 
we are able to completely understand the special case of a single enemy.

\begin{theorem}\label{single_enemy_nonrotation}
Consider the interval game with a single enemy $\varphi$, and suppose the rotation number
of $\psi$ is irrational. Let $\mu$ be an invariant probability measure for $\psi$. If
$\varphi$ does not preserve $\mu$, there is a winning interval.
\end{theorem}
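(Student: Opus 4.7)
The plan is to work in coordinates induced by the invariant measure $\mu$. Since $\psi$ has irrational rotation number $\theta$, the monotone map $h\colon S^1 \to S^1$ defined by $h(x) = \mu([x_0, x])$ (for a basepoint $x_0 \in \text{supp}(\mu)$) is a semiconjugacy $h \circ \psi = R_\theta \circ h$. In these $\mu$--coordinates, $\psi$ becomes $R_\theta$ and $\mu$ becomes Lebesgue measure, and the hypothesis that $\varphi$ does not preserve $\mu$ says exactly that $\varphi$, viewed in the new coordinates, is not a rigid rotation. In the Denjoy case where $h$ is not a conjugacy, I would arrange that the endpoints of the winning interval lie in $\text{supp}(\mu)$ so that the conditions of the game lift from the $R_\theta$--picture back to the original $\psi$.

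The second step is to apply Lebesgue's differentiation theorem to the monotone map $\varphi$: we have $\int \varphi' \le 1$, and since $\varphi$ is not a rotation the set $\lbrace x : \varphi'(x) < 1 \rbrace$ has positive measure (otherwise $\varphi' = 1$ almost everywhere and the equality $\int \varphi' = 1$ forces $\varphi$ to be a translation). Fix $\delta > 0$ so that $E_\delta := \lbrace x : \varphi'(x) \le 1 - 2\delta \rbrace$ has positive measure, and let $p$ be a Lebesgue density point of $E_\delta$; then for all sufficiently small $\ell > 0$, the interval $I = [p - \ell, p]$ satisfies $|\varphi(I)| \le (1 - \delta)\ell$. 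Translating into $\mu$--coordinates, the return condition $\psi^n(I^+)$ lying in the interior of $I$ becomes $n\theta \in (-\ell, 0) \pmod{1}$, while the disjointness condition $\psi^i(I) \cap \varphi(I) = \emptyset$ becomes $i\theta \notin B_\ell \pmod{1}$, where $B_\ell := [\varphi(p - \ell) - p,\; \varphi(p) - (p - \ell)]$ is an arc of length $|I| + |\varphi(I)| \le (2 - \delta)\ell$, positioned near $f(p) := \varphi(p) - p$.

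The remaining task is to find $n$ with $n\theta \in (-\ell, 0) \pmod{1}$ for which the finite rotation orbit $\lbrace 0, \theta, \ldots, n\theta \rbrace$ misses $B_\ell$. I would take $n$ to be the smallest such integer; by the three--distance theorem the orbit $\lbrace i\theta \pmod{1} : 0 \le i \le n \rbrace$ partitions $S^1$ into arcs of at most three distinct lengths, one of which (the ``long'' gap size) exceeds $\ell$ by a factor controlled by the continued fraction of $\theta$. Because $f$ is a continuous nonconstant function on $S^1$, its image is a nondegenerate interval, so by moving $p$ within the density set of $E_\delta$ we can slide $B_\ell$ across a range of positions and slot it into a long gap. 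The main obstacle I anticipate is the uniform treatment of badly--approximable $\theta$ (e.g.\ the golden ratio) paired with $\varphi$ only slightly non-rotation: here both the long/short gap ratio and the contraction factor $1 - \delta$ are close to $1$, so the argument must be calibrated across multiple scales, exploiting the variation of $f$ to guarantee that at some scale the long gap is big enough to hold $B_\ell$.
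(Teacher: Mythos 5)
Your setup (passing to $\mu$-coordinates, locating a point of genuine contraction via Lebesgue differentiation, and reducing the game to a Diophantine condition on the arc $B_\ell$) is reasonable, but the proof is not complete: the decisive step --- actually placing $B_\ell$ inside a gap of the finite orbit $\lbrace i\theta : 0\le i\le n\rbrace$ --- is exactly the hard part of the theorem, and you identify it as an obstacle without resolving it. The quantitative situation is unforgiving. If $n$ is the minimal return time for a window of width $\ell$, the three-distance theorem gives long gaps whose length, in the worst case, is only slightly more than $2\ell$, while $|B_\ell|$ is only slightly less than $2\ell$ when $\delta$ is small; moreover the long gaps occupy a union of arcs whose total measure can be a small fraction of the circle (this happens when the relevant partial quotient of $\theta$ is large), and your only freedom is to move $f(p)=\varphi(p)-p$ by varying $p$ inside the good subset of $E_\delta$, whose image under $f$ you do not control --- it could be a very short interval sitting outside every long gap at every scale, for all the argument shows. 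No mechanism is given forcing these two sets to meet at some scale, and ``calibrating across multiple scales'' is precisely where the content of the theorem lives.

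The paper's proof avoids this positioning problem by a mechanism your proposal is missing: rather than fixing $I$ and hoping the rotation orbit cooperates, it fixes the combinatorics on the left (a closest return $r_m$ to $r$ from the left) and then \emph{slides} the interval. The support-line picture shows that the contraction defect $s\mapsto |[r,s]|-|\varphi([r,s])|$ increases continuously from $0$, so by the intermediate value theorem one can choose the translate $t$ so that $\varphi(I^+)$ lands \emph{exactly at a prescribed point of the $\psi$-orbit of $I^+$} (namely $r_b+t$); the contraction and the closest-return property then force $\varphi(I)$ into the adjacent orbit gap, whose length exceeds $|I|$. In other words, one solves for the position of $\varphi(I)$ instead of trying to verify it for a generically chosen $I$. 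I would also flag a smaller repairable issue in your step 3: a Lebesgue density point of $E_\delta$ does not by itself give $|\varphi([p-\ell,p])|\le (1-\delta)\ell$, since $|\varphi(I)|=\int_I\varphi'+\sigma(I)$ where $\sigma$ is the singular part of $d\varphi$, and neither $\int_{I\setminus E_\delta}\varphi'$ nor $\sigma(I)$ is controlled by density alone; you need $p$ to also be a Lebesgue point of $\varphi'$ and a point of vanishing derivative for $\sigma$, both of which hold almost everywhere.
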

\begin{proof}
First for simplicity we suppose that $\psi$ is conjugate to a rigid rotation; 
equivalently, that $\mu$ has full support. The graph $\Gamma$ of $\varphi$ 
is a monotone $(1,1)$ curve in the torus $S^1\times S^1$, and by hypothesis, 
it does not have slope $1$ everywhere. 

It follows that we can find a slope $1$ curve $L$ that locally supports $\Gamma$ from above,
and a point $(r,\varphi(r))$ which is locally the rightmost point of $\Gamma \cap L$;
i.e.\/ $\Gamma$ is strictly below $L$ in a neighborhood to the right of this point.
Dynamically, $\varphi$ is {\em strongly contracting}
from the right at $r$; i.e.\/ there is some $\epsilon$ so that for $|[r,s]|\le \epsilon$, 
there is an inequality $|\varphi([r,s])| < |[r,s]|$. In fact, by the strictness of
the inequality, there is an $\epsilon$ so that for all $\delta \le \epsilon$ there
is some $s(\delta)>r$ with $|\varphi([r,s(\delta)])| = |[r,s(\delta)]|-\delta$.
Note that the smallest such $s(\delta)$ with this property has the additional property
that $|\varphi([s,s(\delta)])| < |[s,s(\delta)]|$ for all $r< s<s(\delta)$; i.e.\/
$\varphi$ is strongly contracting from the left at $s(\delta)$.
Geometrically, if we let $L_\delta$ denote the line with slope $1$ obtained by translating
$L$ vertically down $\delta$, then $(s(\delta),\varphi(s(\delta)))$ is the first time
$\Gamma$ crosses $L_\delta$ to the right of $(r,\varphi(r))$; see
Figure~\ref{strongly_contracting}.

\begin{figure}[htpb]
\labellist
\small\hair 2pt
\pinlabel $r$ at 127 -10
\pinlabel $s$ at 172 -10
\pinlabel $s(\delta)$ at 257 -10
\pinlabel $L$ at 20 30
\pinlabel $L_\delta$ at 110 30
\pinlabel $\Gamma$ at 210 190
\pinlabel $\delta$ at 120 100
\pinlabel $|[s,s(\delta)]|$ at 214 150
\pinlabel $|\varphi([s,s(\delta)])|$ at 295 180
\endlabellist
\centering
\includegraphics[scale=0.75]{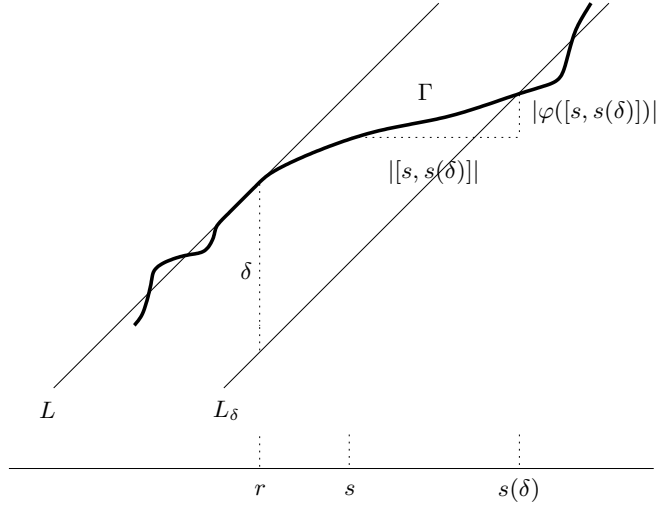}
\caption{$\varphi$ is strongly contracting to the right of $r$.}\label{strongly_contracting}
\end{figure}

We adopt the notation
$r_i:=\psi^i(r)$. Since $\rot(\psi)$ is irrational, by Lemma~\ref{closest_approach_lemma}
we can find an arbitrarily big $m$ so that $r_m$ is a closest approach to $r$ from
the left; in particular, $|r-r_m| < |r_a-r_b|$ for all $0<a,b<m$.
Let $u=|[r_m,r]|$.

Let $r_a,r_b,r_c,r_d$ be successive orbits such that $r_c \le \varphi(r) < r_d$.
Let $t_1=|[r_a,r_b]|$ and $t_2=|[r_b,r_c]|$, and $v=|[r_c,\varphi(r)]|$. Note
that $t_1 > u$ by the definition of $u$. We can
assume (by taking $m$ sufficiently big) that $t_2+v \ll \epsilon$, and 
therefore there is a point $s(t_2+v)$ to the right of $r$ with properties as above.

Let $t=|[r,s(t_2+v)]|$. Then $\varphi(r+t) = r_b+t$, and $|\varphi([r_m+t,r+t])|\le u < t_1$,
so $r_a+t < \varphi(r_m+t) < r_b+t=\varphi(r+t)$. In particular, an interval of
the form $[r_m+t-\delta,r+t]$ is winning, for sufficiently small $\delta$.

\medskip

It remains to consider the general case where $\mu$ does not have full support. Actually, the argument
in this case is essentially the same as that above. In place of $\Gamma$ we can
consider the curve $\Gamma':=\lbrace (\int_0^r d\mu,\int_0^{\varphi(r)} d\mu)\rbrace$. 
This ``graph'' might have horizontal and vertical segments, 
but otherwise we can use the same argument with curves $L,L_\delta$ applied to
$\Gamma'$ in place of $\Gamma$.
\end{proof}

Interestingly enough, the exceptional case that $\varphi$ and $\psi$ are 
both rigid rotations (after a semiconjugacy) turns out to be nontrivial:

\begin{theorem}\label{single_enemy_rotation}
Consider the interval game in which both the single enemy $\varphi$ and $\psi$ are
rigid rotations through $u,t$ respectively. Then there is a winning interval if and
only if $(t,u)$ is contained in an explicit open subset $U$ of the unit square
described below (see Figure~\ref{approximation}).
\end{theorem}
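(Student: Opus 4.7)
The plan is to convert the game into an explicit Diophantine question about the orbit $\{it \pmod 1\}$ and its proximity to $u$, then read off $U$ from continued fraction data.

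By rotational symmetry I normalize $I = [-\ell, 0]$, so $I^+ = 0$ and $|I|=\ell$. Then $\psi^i(I)$ and $\varphi(I)$ are rigid translates of $I$ by $it$ and $u$ respectively, and the landing condition $\psi^n(I^+) \in \mathrm{int}(I)$ becomes $nt \equiv -\delta \pmod 1$ for some $0 < \delta < \ell$, i.e.\ $1-\{nt\} < \ell$. The disjointness conditions $\psi^i(I) \cap \varphi(I) = \emptyset$ for $0 \le i \le n$ become $\|it-u\|_{S^1} > \ell$ on the circle. Squeezing $\ell$ between these two bounds, I conclude that the game has a winning interval if and only if there exists $n \ge 1$ with
\[
1 - \{nt\} \;<\; \min_{0 \le i \le n} \|it - u\|_{S^1}.
\]

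For irrational $t$, Lemma~\ref{closest_approach_lemma} tells me exactly where to test. The quantity $1-\{nt\}$ attains its successive minima along the subsequence $n = q_{2k+1}$ of odd-indexed convergent denominators of $t$, where it equals $\theta_{2k+1}$, and is strictly larger at every other $n \le q_{2k+1}$; so it suffices to check the winning inequality at these special $n$. For each $k$, the partial orbit $\{0,t,\ldots,q_{2k+1}t\}$ partitions $S^1$ into a three-distance configuration with gap sizes controlled by $\theta_{2k}$ and $\theta_{2k+1}$, so $\min_i \|it-u\|_{S^1}$ depends only on which gap of this partition contains $u$. The winning condition at stage $k$ therefore becomes the open geometric condition that $u$ avoid a $\theta_{2k+1}$-collar of the orbit, and I would define $U$ to be the union over $k$ of these open regions, supplemented with the (straightforward) finite analysis for $t \in \Q$, where $\{nt\}$ takes only finitely many values and the condition reduces to a check on a single finite orbit. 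Verifying that this union matches the region pictured in Figure~\ref{approximation} is a direct bookkeeping task.

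The main obstacle is controlling the combinatorics across $k$: as $k$ grows the orbit refines, but $u$ may remain trapped inside every forbidden $\theta_{2k+1}$-collar, and this trapping is exactly the condition defining the complement of $U$. I expect the sharpest form of the argument to identify the relevant gap in terms of the continued fraction of $t$ together with the best-approximation integer $\lambda = \lambda(k)$ for which $u - \lambda t$ is small modulo $1$; tracking how $\lambda$ evolves with $k$ is what produces the self-similar picture of $U$. Handling the boundary case where the $\lambda(k)$ stabilize and equality almost holds in the Diophantine inequality is the point where openness of $U$ has to be checked most carefully.
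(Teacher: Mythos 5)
Your normalization of the game to the single Diophantine inequality $1-\{nt\} < \min_{0\le i\le n}\|it-u\|_{S^1}$ is correct, and so is the observation that a winning $n$ may be replaced by the last ``record'' time for $1-\{nt\}$ preceding it (passing to such an $n'$ shrinks the left side and enlarges the right side). But the next step is wrong as stated: the record times for approach to $0$ from the left are \emph{not} only the odd convergent denominators $q_{2k+1}$; they are the one-sided best approximation denominators $jq_{2k}+q_{2k-1}$, $1\le j\le a_{2k+1}$, which interpolate between $q_{2k-1}$ and $q_{2k+1}$. (Lemma~\ref{closest_approach_lemma} describes the \emph{two-sided} closest returns, which is where the subsequence $q_{2k+1}$ comes from, but the running minimum of $1-\{nt\}$ drops at every intermediate convergent as well.) Consequently it does not suffice to test the inequality at $n=q_{2k+1}$: an intermediate convergent $n'$ could win while $q_{2k-1}$ fails (threshold too large) and $q_{2k+1}$ also fails (the extra orbit points between $n'$ and $q_{2k+1}$ crowd $u$). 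This is repairable by testing at all one-sided records, but it changes the combinatorics you then have to carry out.

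The more serious problem is that the theorem asserts an \emph{explicit} open set $U$, and your proposal never produces one. You define $U$ as a union over $k$ of conditions of the form ``$u$ avoids a $\theta_{2k+1}$-collar of the length-$q_{2k+1}$ orbit,'' and then your final paragraph concedes that tracking how the trapping of $u$ propagates as $k$ grows --- i.e.\ exactly the identification of the complement of $U$ --- is ``the main obstacle'' and is left open. That is not bookkeeping; it is the content of the theorem. The paper's proof sidesteps the continued-fraction analysis entirely by a renormalization: a win is classified by the iterate $n$ together with $m=\lceil nt\rceil$; for $m=1$ the orbit $0,t,\dots,nt$ does not wrap around and $U(1)$ is computed directly as an explicit union of open triangles with vertices $(\tfrac1n,\tfrac in)$, $(\tfrac1{n-1},\tfrac{i-1}{n-1})$, $(\tfrac1{n-1},\tfrac i{n-1})$; and for general $m$, rescaling $[0,m]$ by $\tfrac1m$ shows that $(t,u)\in U(m)$ exactly when $(\tfrac tm,\tfrac{u+i}m)\in U$ for all $0\le i<m$. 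This yields the self-similar description of $U$ (and of its complement as an IFS attractor) that the statement refers to. You should either carry your continued-fraction analysis through to a closed-form answer and prove it agrees with this set, or adopt the rescaling argument, which gets there in two steps.
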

\begin{proof}
Winning $(t,u)$ are classified by which iterate $n$ of $\psi$ certifies the win, and
the smallest integer $m$ such that $nt<m$. Let $U(m)$ be the subset of $U$ with
a given value of $m$. Then one sees directly that $U(1)$ is the union over all $n\ge 2$ and
$1 \le i \le n-1$ of the interior of the triangles with vertices $(\frac 1 n,\frac i n)$, 
$(\frac 1 {n-1},\frac {i-1} {n-1})$, and $(\frac 1 {n-1},\frac i {n-1})$.

On the other hand, by rescaling the interval $[0,m]$ by a factor of $\frac 1 m$ it
is clear that if $(\frac t m, \frac {u+i} m)\in U$ for all $0\le i < m$ then
$(t,u)\in U$, and any element of $U(m)$ is of this kind. This gives a recursive
description of $U$.
\end{proof}

\begin{figure}[htpb]
\labellist
\small\hair 2pt
\endlabellist
\centering
\includegraphics[scale=0.5]{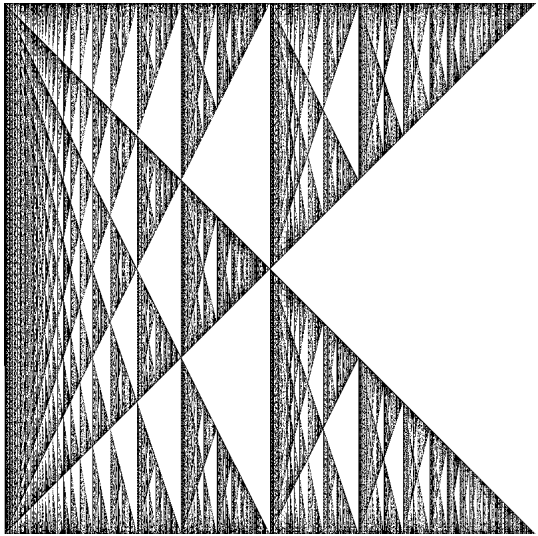}
\caption{The subset $U$ (in white) of the unit square shows the winning interval
games for a pair of rigid rotations.}\label{approximation}
\end{figure}

The complement of $U$ is the attractor of the IFS generated by the set of projective
linear transformations of the form
$$(x,y) \to \left( \frac {x+n} {x+n+1} , \frac y {x+n+1} \right), \quad 
(x,y) \to \left( \frac {x+n} {x+n+1} , \frac {x+y+n} {x+n+1} \right)$$
for all non-negative integers $n$.

Theorems~\ref{single_enemy_nonrotation} and \ref{single_enemy_rotation} together present
an essentially complete picture of the interval game with a single enemy.

Notice for every rational $u$ the set of $t$ with $(t,u)$ in $U$ is an open, dense subset
of $[0,1]$. As a corollary we get strong constraints on $R(w,r,s)$ for $w$ containing at most
one $a^{-1}$:

\begin{corollary}
Suppose $w$ is a word of the form $va^{-1}b^na$ for some $v$ containing no $a^{-1}$,
and suppose $r,s\in \Q$. If $R(w,r,s)$ is irrational then
$(R(w,r,s),ns)$ is not in $U$.

In particular, there is a dense $G_\delta$ subset of $[0,1]$ that $R(w,r,s)$ avoids. 
\end{corollary}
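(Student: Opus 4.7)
The plan is to apply the interval-game machinery directly. By Lemma~\ref{maximum_achieved}, fix a representation $\rho$ with $\rotz(\rho(a))=r$, $\rotz(\rho(b))=s$, and $\rotz(\rho(w))=\theta:=R(w,r,s)$. Suppose $\theta$ is irrational; the goal is to show $(\theta,ns)\notin U$. Since $w=va^{-1}b^n a$ contains exactly one letter $a^{-1}$, the only suffix of $w$ beginning with $a^{-1}$ is $w_1=a^{-1}b^n a$, so the interval game attached to $\rho$ by the proposition preceding Theorem~\ref{generic_smooth_enemies} has a single enemy $\varphi:=\rho(a^{-1}b^{-n}a)$ and driver $\psi:=\rho(w)$. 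Because $\varphi$ is conjugate to $\rho(b)^{-n}$, we have $\rot(\varphi)\equiv -ns\pmod\Z$, while $\rot(\psi)=\theta$. If the game admitted a winning interval chosen in a component of the complement of a finite $\rho(a)$-orbit, then the cited proposition would produce a perturbation $a\mapsto a'$ preserving $\rot(\rho(a))=r$ but strictly increasing $\rot(\rho'(w))$, contradicting extremality of $\theta$. Hence the game is unwinnable.

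By the contrapositive of Theorem~\ref{single_enemy_nonrotation}, $\varphi$ must preserve a $\psi$-invariant probability measure $\mu$. The monotone collapse $h:S^1\to S^1$ associated to $\mu$ semiconjugates $\psi$ to the rigid rotation $R_\theta$ and $\varphi$ to the rigid rotation $R_{-ns\bmod 1}$. A winning interval $J$ downstairs pulls back cleanly to $I:=h^{-1}(J)$ upstairs: $h$ is a continuous monotone surjection, so $I$ is an interval with $h(I^+)=J^+$, and $R_\theta^i(J)\cap R_{-ns}(J)=\emptyset$ together with $\psi^i(I)\cap\varphi(I)\subseteq h^{-1}(R_\theta^i(J)\cap R_{-ns}(J))$ forces disjointness upstairs, while $h(\psi^n(I^+))=R_\theta^n(J^+)\in J$ gives $\psi^n(I^+)\in I$. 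So the rigid game downstairs is also unwinnable, and Theorem~\ref{single_enemy_rotation} forces $(\theta,-ns\bmod 1)\notin U$. Inspecting the explicit triangle description of $U(1)$, the symmetry $u\mapsto 1-u$ sends the triangle with vertices $(1/n,i/n)$, $(1/(n-1),(i-1)/(n-1))$, $(1/(n-1),i/(n-1))$ to the triangle indexed by $(n,n-i)$, which is again a $U(1)$-triangle; by the recursive construction of $U$ (substituting $i\mapsto m-1-i$ in the defining condition), this symmetry propagates to all of $U$. Therefore $(\theta,ns\bmod 1)\notin U$ as well, completing the contradiction.

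For the dense $G_\delta$ statement, fix rational $r,s$. The slice $U_s:=\{t\in[0,1]:(t,ns\bmod 1)\in U\}$ is open (as a horizontal slice of the open set $U$), hence a $G_\delta$, and is dense in $[0,1]$: for every rational second coordinate $u$, the $U(1)$-triangles already produce intervals of $t$ in every interval of the form $(1/n,1/(n-1))$ meeting the horizontal line at height $u$. The first part of the corollary then says exactly that $R(w,r,s)$ lies in the complement $[0,1]\setminus U_s$ whenever $R(w,r,s)$ is irrational.

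The main obstacle is the pair of lifting claims: (i) that the winning intervals produced by Theorems~\ref{single_enemy_nonrotation} and \ref{single_enemy_rotation} can be chosen small enough to lie inside a prescribed component of the complement of a periodic $\rho(a)$-orbit (so that the perturbation leaves $\rot(\rho(a))=r$ intact), and (ii) that winning intervals genuinely pull back through a possibly non-injective semiconjugacy $h$ rather than merely morally doing so. Both are essentially bookkeeping — the proofs of the cited theorems produce winning intervals arbitrarily close to any chosen base point, and the disjointness and rightmost-endpoint conditions defining the game depend only on cyclic order, which $h$ preserves — but the symmetry $u\mapsto 1-u$ of $U$ required to convert $(\theta,-ns\bmod 1)\notin U$ into $(\theta,ns)\notin U$ merits a careful inductive check through the recursive description of $U$.
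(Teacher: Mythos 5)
Your route is exactly the one the paper intends for this corollary (it is stated as an immediate consequence of the interval-game proposition together with Theorems~\ref{single_enemy_nonrotation} and \ref{single_enemy_rotation}): extremal representation, single enemy $\varphi=w_1^{-1}=a^{-1}b^{-n}a$, unwinnability of the game, passage to the rigid model via the invariant measure, and reading off the constraint from $U$. Two of your supporting checks are correct and genuinely needed: the pullback of a winning interval through the monotone degree-one map $h$ does work (disjointness survives because preimages of disjoint sets are disjoint, and $h(\sup h^{-1}(J))=\sup J$ handles the endpoint condition), and $U$ is indeed invariant under $u\mapsto 1-u$ (the $U(1)$ triangle indexed by $(n,i)$ goes to the one indexed by $(n,n-i)$, and the recursion sends $\lbrace (u+i)/m\rbrace_{0\le i<m}$ to $\lbrace ((1-u)+j)/m\rbrace_{0\le j<m}$), which is what reconciles $\rot(\varphi)=-ns$ with the statement's $ns$.

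The genuine gap is your point (i), which you flag and then dismiss incorrectly. The proposition requires a winning interval $I$ lying inside a complementary interval of a finite orbit of $a$ (this is what keeps $\rotz(a')=r$), whereas Theorem~\ref{single_enemy_rotation} characterizes \emph{unconstrained} winnability. For the rigid game these are not interchangeable: the condition $\psi^{n}(I^+)\in\mathrm{int}(I)$ forces $|I|>\lceil nt\rceil-nt$ for the certifying iterate $n$, while the disjointness conditions bound the usable $n$ from above (e.g.\ if $u\equiv jt$ for some small $j$, then every $n\ge j$ is excluded no matter how small $I$ is), so for a fixed $(t,u)\in U$ the winning intervals have in general a strictly positive minimal length and cannot be shrunk or relocated at will. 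This is in contrast to Theorem~\ref{generic_smooth_enemies}, whose statement explicitly supplies intervals inside any prescribed small $J$; Theorem~\ref{single_enemy_rotation} makes no such claim, and after pulling back through $h$ the interval $h^{-1}(J)$ may be large and need not avoid any periodic orbit of the extremal $a$. To close the argument you must either show that $(t,u)\in U$ forces a winning interval inside a complementary interval of some periodic orbit of $a$, or rework the perturbation step to tolerate a winning interval meeting $\mathrm{Per}(a)$. A smaller point: the avoided dense $G_\delta$ should be $U_s$ intersected with the irrationals (still a dense $G_\delta$, since $U_s$ is open and dense and the rationals are countable), as the corollary says nothing when $R(w,r,s)$ is rational.
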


\begin{remark}
For each rational $u$ the set of $t$ with $(t,u)$ not in $U$ is the attractor (i.e.\/ the
limit set) of an explicit finitely generated subsemigroup of $\SL(2,\Z)$ acting projectively on the
interval. For example, if $u=1/2$, the set of ``bad'' $t$ is the limit set of the semigroup
generated by the matrices
$$\begin{pmatrix} 1 & 0 \\ 2 & 1 \\ \end{pmatrix}, \begin{pmatrix} -3 & 2 \\ -8 & 5 \\ \end{pmatrix},
\begin{pmatrix} 1 & 1 \\ 2 & 3 \\ \end{pmatrix}$$
If $T$ is a finitely generated semigroup of contractions of the interval whose images are
disjoint, and some of the maps have a neutral (i.e.\/ parabolic) fixed point, Urba\'nski 
(\cite{Urbanski}; see also \cite{Przytycki_Urbanski}) showed, 
generalizing work of Bowen \cite{Bowen}, that the Hausdorff dimension
of the limit set is the least zero of the pressure function $P$, defined by the
formula 
$$P(s) = \lim_{n \to \infty} \frac 1 n \log \sum_{M \in T_n} \|M'\|^s$$ 
where $T_n$ is the subset of $T$ consisting of words of length $n$, and $\|\cdot\|$ is the
supremum norm. Actually computing this dimension in practice seems hard.
\end{remark}

\section{Acknowledgments}
We would like to thank Kathryn Mann, Shigenori Matsumoto and the anonymous referee
for some useful comments. Danny Calegari was supported by NSF grant DMS 1005246. 

\appendix

\section{Higher rank}

For convenience, we state the analogues of our main theorems
to higher rank. The proofs of these theorems are routine generalizations of the
proofs in the body of the paper, and are omitted.

Throughout this section, let $F$ be a free group of rank $n$ with free generating
set $a_1,a_2,\cdots,a_n$. For real numbers $r_1,r_2,\cdots,r_n$ and $w\in F$, let
$R(w,r_1,r_2,\cdots,r_n)$ denote the maximum of $\rotz(w)$ under representations
$F \to \homeo^+(S^1)^\sim$ for which $\rotz(a_i)=r_i$.

\begin{theorem}[High rank Rationality Theorem]
Suppose $w$ is positive. If the $r_i$ are all rational, so is $R(w,r_1,\cdots,r_n)$.
Moreover, if $w$ has at least one $a_i$,
the denominator of $R(w,r_1,\cdots,r_n)$ is no bigger than that of $r_i$.
\end{theorem}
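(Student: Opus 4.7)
The plan is to follow the proof of Theorem~\ref{positive_rational} essentially verbatim, with bookkeeping adjusted for $n$ generators in place of two. I would begin with a representation $\rho$ realizing the supremum $R(w,r_1,\dots,r_n)$, which exists by the higher-rank analogue of Lemma~\ref{maximum_achieved}. Since each $r_i = p_i/q_i$ is rational, the generator $a_i$ admits a finite orbit $\Sigma_i$ of cardinality $q_i$ on which $a_i$ acts as a cyclic permutation of step $p_i$; set $\Sigma = \bigcup_i \Sigma_i$.

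Next, define right-push step functions by $\alpha_i^+(p) = a_i(x')$ for $p \in (x,x']$ with $x,x'$ consecutive in $\Sigma_i$. Each $\alpha_i^+$ is monotone with the same rotation number as $a_i$, agrees with $a_i$ on $\Sigma_i$, and satisfies $\alpha_i^+ \ge a_i$ pointwise. Substituting $\alpha_i^+$ for $a_i$ throughout $w$ produces a monotone composition $w^+$, and positivity of $w$ immediately gives $\rotz(w^+) \ge \rotz(w) = R(w,r_1,\dots,r_n)$.

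For the reverse inequality I would approximate each $a_i$ by a homeomorphism $a_i'$ with rotation number $r_i$, agreeing with $a_i$ on $\Sigma_i$ and satisfying $\alpha_i^+(p) - a_i'(p) \le \epsilon$ whenever $p$ is at least $\epsilon$ to the right of the nearest point of $\Sigma_i$. Choosing $\epsilon$ smaller than half the minimum gap in the combined set $\Sigma$, a suitable initial point tracks the $w^+$-orbit to within $\epsilon$ for all iterates, yielding $\rotz(w') = \rotz(w^+)$ and hence equality $R(w,r_1,\dots,r_n) = \rotz(w^+) \in \Q$.

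For the denominator bound, suppose $w$ contains the letter $a_i$. Cyclically conjugate $w$ so that it ends in $a_i$, which is permissible since $\rotz$ is a conjugacy invariant. Then the outermost factor of $w^+$ is $\alpha_i^+$, whose image lies in $\Sigma_i$, so $w^+$ restricts to a monotone self-map of $\Sigma_i$ and its rotation number has denominator at most $q_i$. Since this holds for each $a_i$ appearing in $w$, we obtain the stated bound. The whole argument is a direct generalization of the rank-2 case; the only point requiring mild care is coordinating the simultaneous $\epsilon$-approximations at the several generators, which is controlled by fixing $\epsilon$ relative to the global set $\Sigma$ rather than to the individual $\Sigma_i$.
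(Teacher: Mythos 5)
Your proposal is correct and is exactly the routine generalization of the paper's proof of Theorem~\ref{positive_rational} that the appendix alludes to: the same right-pushed step maps $\alpha_i^+$, the same $\epsilon$-tracking argument calibrated to the gaps of the combined orbit set $\Sigma$, and the same cyclic-conjugation trick to see that $w^+$ preserves $\Sigma_i$ and hence has period at most $q_i$. No gaps.
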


\begin{theorem}[High rank Stability Theorem]
Suppose $w$ is positive. Then $R$ is locally constant from the right at rational
points; i.e.\/ for every collection of rational numbers $r_i$, there is an
$\epsilon>0$ so that $R(w,\cdots)$ is constant on 
$[r_1,r_1+\epsilon)\times\cdots\times[r_n,r_n+\epsilon)$.

Conversely, if $R(w,r_1,\cdots,r_n)=p/q$ (where $p/q$ is reduced) 
and the biggest power of
consecutive $a_i$s in $w$ is $a_i^{m_i}$ then there is an inequality
$$R(w,r_1,\cdots,r_i+1/m_iq,\cdots,r_n) \ge p/q + 1/q^2$$
\end{theorem}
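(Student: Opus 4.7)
The plan is to mimic the rank-2 proof by generalizing each supporting tool to arbitrary rank. Positivity gives monotonicity of $R$ in each $r_i$ separately: for positive $w$ one can replace $a_i$ by $R_t \circ a_i$ to increase $r_i$ without decreasing $\rotz(w)$. Compactness of the space of invariant circular orders gives the semicontinuity property of Lemma~\ref{lower_semicontinuous} verbatim in rank $n$. And the High rank Rationality Theorem bounds the denominator of $R(w,r_1,\ldots,r_n)$ at any rational point by the denominator of any $r_i$ whose letter occurs in $w$.

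For the first assertion, I would argue coordinate by coordinate at a rational tuple $(r_1,\ldots,r_n)$. Semicontinuity together with monotonicity makes $R(w,\cdot,r_2,\ldots,r_n)$ right continuous in the first coordinate; the High rank Rationality Theorem forces this function to take only finitely many rational values on any bounded interval (with denominator bounded by that of $r_2,\ldots,r_n$). Right continuity plus finitely many values forces local constancy from the right, giving some $\epsilon_1>0$. Pick a rational $r_1' \in (r_1,r_1+\epsilon_1)$ with $R$ unchanged, and iterate the argument in the remaining coordinates to obtain $\epsilon_2,\ldots,\epsilon_n>0$. Monotonicity in every coordinate then extends the constancy from a single broken ray to the full product box $[r_1,r_1+\epsilon) \times \cdots \times [r_n,r_n+\epsilon)$ with $\epsilon := \min_k \epsilon_k$.

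For the converse inequality, I would set up the $n$-letter analogue of the admissible-word combinatorics of \S~\ref{UR_subsection}. An admissible cyclic word for $(v_1,\ldots,v_n)$ is a word in the alphabet $\{X_1,\ldots,X_n\}$ with exactly $v_k$ copies of $X_k$; writing $r_k=u_k/v_k$, a maximal $a_k^m$-string hops over $mu_k+1$ copies of $X_k$, and the rank-2 argument generalizes to give $R(w,r_1,\ldots,r_n) = \max_W R(w,r_1,\ldots,r_n,W)$. Pick $W$ realizing this maximum $p/q$ (reduced), and fix the index $i$ of interest. Over one period of length $q$, the landing points of the $a_i^{m_i}$-hops visit $q$ distinct copies of $X_i$, so by pigeonhole some pair of consecutive such endpoints encloses at most $\lfloor v_i/q \rfloor$ copies of $X_i$. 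Form $W'$ by replacing each $X_i$ in $W$ by $X_i^{m_i q}$. The counts $v_j$ for $j \ne i$ are preserved, so the other rotation numbers stay equal to $r_j$, while the total number of $X_i$'s becomes $v_i m_i q$ and supports rotation number $(u_i m_i q + v_i)/(v_i m_i q) = r_i + 1/m_i q$ for $a_i$. Comparing orbits — each inflated $a_i^{m_i}$-hop outpaces the corresponding old hop by at least one $X_i$, compounded across the $q$-periodic orbit — yields $R(w,r_1,\ldots,r_i + 1/m_i q,\ldots,r_n) \ge p/q + 1/q^2$, as desired.

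The main obstacle is the final orbit-comparison step: in rank 2 this is a delicate but explicit calculation showing that the per-hop slack compounds across a full period to exactly the claimed $1/q^2$ improvement. That calculation is insensitive to the extra generators, since additional $a_j$'s for $j \ne i$ only enrich $W$ without interfering with the inflation of the $X_i$'s, so the high-rank proof reduces to a cosmetic relabeling of the rank-2 bookkeeping. The one genuinely new observation is that inflating only the $X_i$'s leaves each $v_j$ ($j\ne i$) intact, which is precisely what allows us to perturb $r_i$ alone while pinning the other rotation numbers.
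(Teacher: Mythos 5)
Your proposal is correct and follows essentially the same route as the paper, which explicitly treats the high-rank statement as a routine generalization of the rank-2 Stability Theorem: the first part via right-continuity (semicontinuity plus monotonicity) combined with finiteness of values from the Rationality Theorem, iterated coordinate by coordinate, and the converse via the pigeonhole argument on the $q$ landing points of the $a_i^{m_i}$-hops followed by inflating each $X_i$ to $X_i^{m_iq}$. Your closing observation---that inflating only the $X_i$'s leaves the other letter counts, and hence the other rotation numbers, untouched---is exactly the point that makes the generalization routine.
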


\begin{theorem}[High rank Stairstep Theorem]
Let $w$ be positive, and suppose we are given rational numbers $p_j/q_j$ for
$j\ne i$ and $c/d$ so that
$$R(w,p_1/q_1,\cdots,t_i,\cdots,p_n/q_n)=c/d$$ for some real $t_i$ 
(so necessarily $d \le q_j$ for each $j$). Then the infimum $t_i$ with this
property is rational, 
and there is an algorithm to compute it. Moreover,
if $p_i/q_i$ is this infimal value, $R(w,p_1/q_1,\cdots,p_n/q_n)=c/d$.
\end{theorem}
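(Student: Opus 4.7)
The plan is to adapt the proof of the rank-2 Stairstep Theorem (\ref{stairstep_theorem}) essentially verbatim, replacing the one-parameter family of admissible words by a (still finite) combinatorial enumeration of configurations. First, by the high rank Rationality Theorem, whenever $t_i$ is rational all values $R(w,p_1/q_1,\dots,t_i,\dots,p_n/q_n)$ are rational with denominator bounded by $\min_j q_j$. A straightforward generalization of Lemma~\ref{nondecreasing} shows $R(w,\dots,t_i,\dots)$ is non-decreasing in $t_i$, so lower semicontinuity (generalizing Lemma~\ref{lower_semicontinuous}) reduces the problem to computing the infimum over rational $t_i = u/v$.

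Next, generalize the combinatorial framework of \S\ref{UR_subsection}. Generalizing Lemma~\ref{sigmas_disjoint}, assume the finite orbits $\Sigma_j$ of the $a_j$ are pairwise disjoint. The cyclic configuration of $\Sigma := \bigsqcup_j \Sigma_j$ on $S^1$ is encoded by an admissible cyclic word $W$ in letters $X_1,\dots,X_n$ containing $q_j$ copies of $X_j$ for $j\ne i$ and $v$ copies of $X_i$. Fix one of the (finitely many) cyclic arrangements of the $X_j$ for $j\ne i$; this creates $K := \sum_{j\ne i} q_j$ ``slots'' into which the $v$ copies of $X_i$ are inserted. Let $t_1,\dots,t_K\ge 0$ record the number of $X_i$'s in each slot, so $\sum_k t_k = v$; as in the rank-2 proof, we relax these to real variables.

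Now set up the linear program. Trace a periodic orbit of the $w^+$-map on $\Sigma$ of period $d$, decomposing $w^d$ into maximal alternating strings $a_{j_1}^{\alpha_1}a_{j_2}^{\alpha_2}\cdots a_{j_M}^{\alpha_M}$. Each string $a_j^{\alpha}$ with $j\ne i$ hops over exactly $\alpha p_j + 1$ copies of $X_j$, so the sequence of landing positions is fully determined by the fixed arrangement. Each string $a_i^{\alpha}$ hops over $l$ copies of $X_i$ for some $l=l_k\ge 0$ satisfying the homogeneous inequality
\[
t_{s_k}+t_{s_k+1}+\cdots+t_{s_k+l_k-1}\le \alpha u,
\]
where $s_k$ is the current slot index. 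These inequalities, together with $t_k\ge 0$ and $\sum_k t_k = 1$ (after rescaling $v=1$), form a finite LP in the variables $t_k$ and $u$, with integer coefficients and hence a rational optimal $u^\ast$ (attained, since the feasible region is closed).

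Finally, conclude. There are finitely many combinatorial types: finitely many cyclic arrangements of the $X_j$ ($j\ne i$), and for each such arrangement, finitely many valid vectors $(l_1,\dots,l_M)$ of non-negative integers summing to $cq_* - \sum_k(\alpha_k p_{j_k}+1)$ (where $q_* = \prod_j q_j$ or some common denominator); compare the enumeration in the rank-2 proof. The desired infimum is $\min u^\ast$ taken over all these types, rational and computable, and achieved because each individual optimum is achieved. The main (purely technical) obstacle is the bookkeeping: the number of combinatorial types grows rapidly in $n$ and the $q_j$, and the pruning heuristics of \S3.5 must be generalized for the algorithm to be practical; but the existence, rationality, and computability statements follow routinely.
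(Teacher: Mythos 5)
Your proof is correct and follows exactly the route the paper intends: the appendix explicitly omits this proof as a ``routine generalization'' of Theorem~\ref{stairstep_theorem}, and your argument is that generalization, with the one genuinely new ingredient --- enumerating the finitely many cyclic arrangements of the fixed letters $X_j$, $j\ne i$, before inserting the variable copies of $X_i$ --- correctly identified. (One small slip of wording: an $a_i^{\alpha}$-string hops over $\alpha u+1$ copies of $X_i$ by definition of the dynamics, and your $l_k$ should count the number of \emph{slots}, i.e.\ fixed letters, that it clears; but the displayed homogeneous inequality you write is the correct one, so this does not affect the argument.)
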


\begin{theorem}[High rank Isobar Theorem]
Let $w$ be positive. For any rational $p/q$ the set of $(r_1,\cdots,r_n)\in
[0,1]^n$ 
such that $R(w,r_1,\cdots,r_n)\ge p/q$ is a finite sided rational polyhedron, 
whose boundary consists of finitely many polyhedra on which at least one
$r_i$ is constant.
\end{theorem}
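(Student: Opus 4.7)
The plan is to generalize the proof of the rank-$2$ Isobar Theorem (Theorem~\ref{isobar_theorem}) essentially verbatim. The higher-rank analogues of Lemma~\ref{feasible_rots} and Lemma~\ref{rotations_dense} go through without essential change, since the linear-programming proof of the first never references the rank, and the second only requires the routine higher-rank versions of Lemma~\ref{smooth_approximation} and Lemma~\ref{smooth_family}.

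Let $X(w:p/q)\subset [0,1]^n$ denote the set of tuples $(r_1,\ldots,r_n)$ for which $p/q$ is achieved as $\rotz(w)$ by some representation with $\rotz(a_i)=r_i$. By the higher-rank version of Lemma~\ref{rotations_dense}, this set is the closure of the set of tuples realized by representations in which each $a_i$ is conjugate to the rigid rotation $R_{r_i}$ and $\rotz(w)=p/q$ on the nose. Combining monotonicity of $R$ in each $r_i$ (the higher-rank version of Lemma~\ref{nondecreasing}) with the interval structure of $X(w,r_1,\ldots,r_n)$ from Proposition~\ref{interchange}, the super-level set $\{R\ge p/q\}$ is the upward closure of $X(w:p/q)$ in the product order on $[0,1]^n$; so it suffices to show that $X(w:p/q)$ is the closure of a finite union of rational coordinate boxes in $[0,1]^n$.

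Fix such a representation. Write $w$ as a concatenation of maximal single-generator syllables and let $M=|w|$. Since $\rotz(w)=p/q$, there is some $t_0$ with $w^q(t_0)=t_0+p$, and the $Mq$ successive iterates of $t_0$ under the letter-by-letter application of $w^q$ tile $[t_0,t_0+p]$ into $Mq$ subintervals, each of the form $[s,a_i(s)]$ for some generator $a_i$. Projecting to $S^1$ yields a combinatorial ``partition'': the cyclic order of all endpoints together with their generator labels and the pairing $s\mapsto a_i(s)$ induced by $w$. There are only finitely many such partitions. For each partition and each generator $a_i$ separately, apply Lemma~\ref{feasible_rots} to the sub-collection of intervals labelled $a_i$ to obtain a rational interval of feasible $r_i$. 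As in the rank-$2$ case, the $n$ feasibility problems decouple---after fixing a homeomorphism $a_i$ conjugate to $R_{r_i}$ realizing the required $a_i$-intervals, the positions of $a_j$-endpoints for $j\ne i$ can be adjusted freely within their cyclic-order class---so the set of compatible tuples $(r_1,\ldots,r_n)$ is a product of rational intervals, i.e.\ a rational coordinate box in $[0,1]^n$.

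Taking the union over the finitely many partitions yields a finite collection of rational coordinate boxes whose union is dense in $X(w:p/q)$. Its upward closure in $[0,1]^n$, which by the above equals the super-level set, is itself a finite union of rational coordinate boxes of the form $[a_1,1]\times\cdots\times[a_n,1]\cap[0,1]^n$, whose boundary consists of finitely many $(n-1)$-dimensional faces, each contained in a hyperplane $\{r_i=c\}$, as required. The one step that needs care is the decoupling of feasibility between distinct generators: the rank-$2$ paper asserts it without detailed justification, and in higher rank one should verify explicitly that for any feasible tuple (obtained by independently choosing feasible $r_i$ per generator) the $n$ homeomorphisms $a_1,\ldots,a_n$ can be built simultaneously, with a consistent placement of endpoints on $S^1$ compatible with the given partition.
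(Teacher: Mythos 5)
Your proposal is correct and follows essentially the same route as the paper: the higher-rank statement is obtained exactly as you do it, by generalizing the rank-$2$ proof of Theorem~\ref{isobar_theorem} (the analogues of Lemma~\ref{feasible_rots} and Lemma~\ref{rotations_dense}, the tiling of $[0,p]$ into $qM$ generator-labelled intervals, finitely many combinatorial partitions, and per-generator feasibility giving rational coordinate boxes whose union is dense in $X(w:p/q)$). Your explicit reduction of the super-level set to the upward closure of $X(w:p/q)$, and your caution about simultaneously realizing the $n$ conjugating homeomorphisms over a single placement of endpoints (which works because the invariant measures for distinct generators are independent once the cyclic order is fixed), simply make precise steps the paper leaves implicit.
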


The results in \S~\ref{arbitrary_word_section} also generalize in a straightforward
way to higher rank, but we omit the statements.

\end{document}